\documentclass[11pt]{elsarticle}
\usepackage{mystyleElsArticle}
\usepackage{mathrsfs}
\usepackage{booktabs}

%
%

\newcommand{\hmpo}{\mathrm{H}}
\newcommand{\hmpn}{\mathbf{H}}

\newcommand{\mindex}[1]{\boldsymbol{#1}}
\newcommand{\minda}{\mindex{\alpha}}
\newcommand{\mindb}{\mindex{\beta}}

\newcommand{\mset}[1]{\mathfrak{#1}}

\newcommand{\sJ}{\mset{J}}

\footskip = 20pt
\headsep  = 20pt
\usepackage[margin=1.00 in]{geometry}    

\makeatletter
\def\ps@pprintTitle{%
  \let\@oddhead\@empty
  \let\@evenhead\@empty
  \let\@oddfoot\@empty
  \let\@evenfoot\@oddfoot
}
\makeatother

\begin{document}

\begin{frontmatter}
  \title{A model reduction method for multiscale elliptic PDEs with random coefficients using an optimization approach}

  \author[cms]{Thomas Y. Hou}
  \ead{hou@cms.caltech.edu}
  \author[hku]{Dingjiong Ma}
  \ead{martin35@hku.hk}
  \author[hku]{Zhiwen Zhang\corref{cor1}}
  \ead{zhangzw@hku.hk}

  \address[cms]{Computing and Mathematical Sciences, California Institute of Technology, Pasadena, CA 91125, USA. }
  \address[hku]{Department of Mathematics, The University of Hong Kong, Pokfulam Road, Hong Kong SAR.}
  \cortext[cor1]{Corresponding author}

\begin{abstract}
In this paper, we propose a model reduction method for solving multiscale elliptic PDEs with random coefficients in the multiquery setting using an optimization approach. The optimization approach enables us to construct a set of localized multiscale data-driven stochastic basis functions that give optimal approximation property of the solution operator. Our method consists of the offline and online stages. In the offline stage, we construct the localized multiscale data-driven stochastic basis functions by solving an optimization problem. In the online stage, using our basis functions, we can efficiently solve multiscale elliptic PDEs 
with random coefficients with relatively small computational costs. Therefore, our method is very efficient in solving target problems with many different force functions. The convergence analysis of the proposed method is also presented and has been verified by the numerical simulation.

\end{abstract}

  \begin{keyword}
    Random partial differential equations (RPDEs);  uncertainty quantification (UQ);    multiscale elliptic PDEs;    optimization method; localized data-driven  stochastic basis.
  \end{keyword}

\end{frontmatter}

\section{Introduction} \label{sec:introduction}
\noindent
Many physical and engineering applications involving uncertainty quantification (UQ) can be described by stochastic partial differential equations (SPDEs, i.e., PDEs driven by Brownian motion) or partial differential equations with random coefficients (RPDEs). In recent years, there has been an increased interest in the simulation of systems with uncertainties, and several numerical methods have been developed in the literature to solve UQ problems; see \cite{Ghanem:91,Xiu:03,babuska:04,matthies:05,WuanHou:06,Wan:06,Babuska:07,Webster:08,Xiu:09,Najm:09,sapsis:09,Zabaras:13,
Grahamquasi:2015} and references therein. These methods can be effective when the dimension of stochastic input variables is low or medium. However, their performance deteriorates dramatically when the dimension of stochastic input variables is high because of the curse of dimensionality. In addition, when SPDEs or RPDEs involving multiscale features, the problems become more challenging as it requires tremendous computational resources to resolve the small scales of the solutions.

Recently, some progress has been made in developing numerical method for multiscale PDEs with random coefficients. 
In \cite{hou2015heterogeneous}, Hou et. al. proposed a heterogeneous stochastic finite element method (HSFEM) framework to solve elliptic PDEs with random coefficient, which discretizes the equations using the heterogeneous coupling of spatial basis with local stochastic basis to exploit the local stochastic structure of the solution space. Zabaras et. al. proposed a stochastic variational multiscale method for diffusion in heterogeneous random media \cite{Zabaras:06,Zabaras:07}. They combined the generalized polynomial chaos (gPC) method with variational multiscale method to do model reduction. 
In \cite{Ghanem:08}, Ghanem et. al. considered the probabilistic equivalence and stochastic model reduction in multiscale analysis. In \cite{Kevrekidis:2003}, Kevrekidis et.al. applied the equation-free idea to study stochastic incompressible flow problems.

We also make some progress in developing numerical methods for multiscale PDEs with random coefficients by exploring the low-dimensional structure of the solutions and constructing problem-dependent basis functions to solve these RPDEs. In \cite{ChengHouYanZhang:13,ZhangCiHouMMS:15,ZhangHouLiu:15}, we proposed the data-driven stochastic methods to solve partial differential equations with high-dimensional random input and/or multiscale coefficients. We found that the data-driven stochastic basis functions can be used to solve the RPDEs with many different force functions. However, to obtain these data-driven stochastic basis functions, we need to choose a set of force functions, solve the RPDEs for each force function, and extract related basis information. Therefore, these data-driven stochastic methods cannot fully explore the information hidden in the RPDEs operator. This motivates us to develop more efficient data-driven stochastic methods to solve RPDEs.

In this paper, we propose a new approach to construct multiscale data-driven stochastic basis functions, which can be used to solve multiscale PDEs with random coefficients in the multi-query setting. Our method is inspired by the recent development in exploring intrinsic low-dimensional structures of the deterministic elliptic problems, see
\cite{HouPengchuan:2017,Peterseim:2014,Owhadi:2015,Owhadi:2017} and references therein. We shall use the following multiscale elliptic equation with random coefficient as an example to illustrate the main idea of our approach,
\begin{align}
\mathcal{L}^{\varepsilon}(x,\omega) u^{\varepsilon}(x,\omega) \equiv -\nabla\cdot(a^{\varepsilon}(x,\omega)\nabla u^{\varepsilon}(x,\omega)) = f(x), 
\quad x\in D, \omega\in\Omega. \label{MsStoEllip_Eq}
\end{align}
More details about the setting of the problem \eqref{MsStoEllip_Eq} will be discussed in Section \ref{sec:ModelReductionOC}. Our method consists of the offline and online stages. In the offline stage,
we apply an optimization approach to systematically construct localized multiscale data-driven stochastic basis functions on each patch associated with each coarse interior grid. The basis functions are energy minimizing functions on local regions of the domain. Thus, they give optimal approximation property of the stochastic multiscale solution operator.  We emphasize that the construction of the basis functions only depends on the RPDEs operator and does not depend on the forcing functions. Therefore, our method fully explores the information hidden in the RPDEs operator.

In the online stage, using our localized multiscale data-driven stochastic basis functions, we can efficiently solve the  multiscale RPDEs for a broad range of forcing functions.
Under some mild conditions, we prove that the basis functions have exponential decay property away from the center of local patch. In addition, we analyze the error between the numerical solution obtained from our method and the exact solution. Finally, we carry out several numerical experiments to show that our new method can offer considerable savings over many existing methods.

Though there are several prior works on the numerical methods for multiscale elliptic PDEs with random coefficients, the novelty of our paper is that we provide a unified approach to upscale the physical space and random space simultaneously. Moreover, the optimization approach that we use to construct the 
localized multiscale random basis functions (see Eqns. \eqref{OC_MsStocBasis_Obj}-\eqref{OC_MsStocBasis_Cons1}) 
enables us to explore the localized random structures in the solution space so that we can compute 
problems that are parametrized by many random variables.  More details will be demonstrated in Figure \ref{fig:ProfilesLocalBasis} and Section \ref{sec:NumericalExample3}. 
In addition, we prove that the multiscale random basis functions have an optimal approximation property. 
We remark that our paper shares some similarity with the idea presented in \cite{hou2015heterogeneous}, in the sense that we both investigate the localized structures in the random space. However, 
we construct the local random basis functions and multiscale basis function at the same time by solving a local optimization problem. 

The rest of the paper is organized as follows. In Section 2, we give a brief introduction of the generalized polynomial chaos method and data-driven stochastic method and operator compression. In Section 3, we first introduce the construction of localized basis function for a deterministic elliptic operator. Then, we present the derivation of our multiscale data-driven stochastic basis functions. Issues regarding the practical implementation of our method will be discussed. Some convergence analysis of our method will be discussed in section 4. In Section 5, we present numerical results to demonstrate the efficiency of our method. Concluding remarks are made in Section 6.

\section{Preliminaries} \label{sec:preliminaries}
\noindent
To make this paper self-contained, in this section we give a brief review of the generalized polynomial chaos (gPC) method and data-driven stochastic method. 

\subsection{The generalized polynomial chaos (gPC) method} \label{sec:gPC}
\noindent
In many physical and engineering problems, randomness generally comes from various independent
sources, so randomness in the PDE \eqref{MsStoEllip_Eq} is often given in terms of independent random variables.
We assume the randomness in the differential operator $\mathcal{L}^{\varepsilon}(x,\omega)$ is given in terms of $r$ independent random variables,  i.e., $\xi(\omega) = \bkr{\xi_1(\omega), \xi_2(\omega), \cdots, \xi_{r}(\omega)}$.  Without the loss of generality, we can further assume such independent random variables have the same distribution function $\rho(x)$. We get  $\mathcal{L}^{\varepsilon}(x,\omega)=\mathcal{L}^{\varepsilon}(x,\xi_{1}(\omega),...,\xi_{r}(\omega))$.  By the Doob-Dynkin lemma \cite{Oksendal:13}, the solution of Eq.\eqref{MsStoEllip_Eq} can still be represented by these random variables, i.e. $u^{\varepsilon}(x,\omega) = u^{\varepsilon}(x,\xi_{1}(\omega),...,\xi_{r}(\omega))$.

Let $\{\hmpo_i(\xi)\}_{i=1}^{\infty}$ denote the one-dimensional, $\rho(\xi)$-orthogonal polynomials, i.e.,
\begin{align*}
 \int_{\Omega}\hmpo_{i}(\xi)\hmpo_{j}(\xi)\rho(\xi)d\xi = \delta_{ij}.
\end{align*}
For some commonly used distributions, such as the Gaussian distribution and
the uniform distribution, such orthogonal polynomial sets are Hermite polynomials and  Legendre polynomials, respectively \cite{Ghanem:91,Xiu:03}. For general distributions, such polynomial set can be obtained by numerical methods \cite{Wan:06}.  Furthermore, by a tensor product representation, we can use the one-dimensional polynomial $\hmpo_i(\xi)$ to construct a set
of orthonormal basis $\hmpn_{\minda}(\xi) $'s of $L^2(\Omega)$ as follows
\begin{align}
  \hmpn_{\minda}(\xi) = \prod_{i=1}^{r}\hmpo_{\alpha_i}(\xi_i), \quad \minda \in \sJ_{r}^{\infty}, \label{tensor_orthonormal_basis}
\end{align}
where $\minda$ is a multi-index and $\sJ_{r}^{\infty}$ is a multi-index set of countable cardinality,
\begin{align*}
  \sJ_{r}^{\infty} = \bkc{\minda=\bkr{\alpha_1, \alpha_2, \cdots, \alpha_{r}} \,|\, \alpha_i \ge 0, \alpha_i \in \NN}.
\end{align*}
The zero multi-index corresponding to $\hmpn_{\vzero}(\xi)=1$, which is used to represent the mean of the solution. Clearly, the cardinality of $\sJ_{r}^{\infty}$ is infinite. For the purpose of numerical computations, we prefer a finite set of polynomials. There are many choices of truncations. One possible choice is the set of polynomials whose total orders are at most $p$, i.e.,
\begin{equation}
  \sJ_{r}^{p} = \bkc{\minda \,|\, \minda=\bkr{\alpha_1, \alpha_2, \cdots, \alpha_{r}}, \alpha_i \ge 0, \alpha_i \in \NN,\bkl{\alpha}=\sum_{i=1}^{r} \alpha_i \le p}. \label{gPC_index}
\end{equation}
The cardinality of $\sJ_{r}^{p}$ in \eqref{gPC_index} or the number of polynomial basis functions, denoted by $N_{gPC}=\bkl{\sJ_{r}^{p}}$, is equal to $\frac{(p+r)!}{p!r!}$. Another good choice is the sparse truncation method proposed in \cite{SchwabTodor:03,WuanHou:06}. We may simply write such a truncated set as $\sJ$ when no ambiguity arises.

By the Cameron-Martin theorem \cite{cameron:47}, we know the solution of Eq.\eqref{MsStoEllip_Eq} admits a polynomials chaos expansion,
\begin{align}
  u(x,\omega) = \sum_{\minda \in \sJ_{r}^{\infty}} v_{\minda}(x )\hmpn_{\minda}(\xi(\omega)) \approx
  \sum_{\minda \in \sJ_{r}^{p}} v_{\minda}(x )\hmpn_{\minda}(\xi(\omega)).   \label{Solu_gPC_expansion}
\end{align}
The expansion coefficients $u_{\minda}(x )$ can be obtained by the Galerkin projection with standard finite element basis functions. To simplify the notation, we write the polynomials chaos basis and its expansion coefficient in a vector form.
Substituting the expansion of $u(x,\omega)$ into Eq.\eqref{MsStoEllip_Eq}, multiplying both side by $\hmpn_{\mindb}(\xi(\omega)$, and taking the expectations, we obtain a coupled PDEs system for the expansion coefficient $u_{\minda}(x)$
\begin{align}
 -\nabla  \cdot (\mathfrak{T}_{\minda \mindb}^{\hmpn}(x)\nabla v_{\minda}(x)) = f(x,\theta)E[\hmpn_{\mindb}(\xi(\omega))], \quad
 \forall \mindb \in \sJ_{r}^{p}, \label{pre_gPC_precondition}
\end{align}
where the tensor $\mathfrak{T}_{\minda \mindb}^{\hmpn}(x)=E[a^{\varepsilon}(x,\xi(\omega))\hmpn_{\minda}(\xi(\omega))
\hmpn_{\mindb}(\xi(\omega))]$ and the Einstein summation is assumed. By solving Eq.\eqref{pre_gPC_precondition}, we can obtain the polynomial chaos solution.  This is the basic idea of the polynomial chaos method that has been successfully applied to many SPDE problems. However, the number of the basis  increases fast, which makes it inefficient for high-dimensional problems.

\subsection{The data-driven stochastic method} \label{sec:DSM}
\noindent
We recognized that limitations of the polynomial chaos method were caused by the predetermined (i.e., problem-independent)
basis functions in the solution expansion \eqref{Solu_gPC_expansion}. The use of predetermined basis functions has the advantage of being easy to implement, however, it has limitation in tackling more challenging high-dimensional SPDE and RPDE problems. We found that many high-dimensional SPDE and RPDE problems have certain low-dimensional structures, in the sense of Karhunen-Lo\`{e}ve expansion (KLE) \cite{Karhunen:47,Loeve:78}, which suggest the existence of reduced-order models and better formulations for efficient numerical methods. Motivated by this observation, we have made progress in developing data-driven stochastic method (DSM) to solving high-dimensional RPDE problems \cite{ChengHouYanZhang:13,ZhangHuHouLinYanCiCP:14,ZhangCiHouMMS:15,ZhangHouLiu:15}.

In the DSM, we explore the low-dimensional structures of the high-dimensional RPDE solutions and
construct a set of problem-dependent stochastic basis functions $\{A_{i}(\omega)\}_{i=1}^{m}$, which
give a compact representation for a broad range of forcing functions.
The DSM consists of offline and online stages. In the offline stage, we use the KLE of RPDE solutions to construct a set of data-driven stochastic basis functions. Specifically, we assume the force function 
$f(x)$ in Eq.\eqref{MsStoEllip_Eq} can be parameterized by $f(x)\approx \sum_{i=1}^{K}c_if_i(x)$.  
With such a parametrization of $f(x)$, we begin our construction of the stochastic basis $\{A_i(\omega)\}_{i=0}^{m_1}$ based on the KL expansion of the RPDE solution of \eqref{MsStoEllip_Eq} with $f_{1}(x)$ as a forcing function. We propose an error analysis to evaluate the completeness of the data-driven basis $\{A_i(\omega)\}_{i=0}^{m}$. Multiple trial functions
$f_{i}(x)$, $i=2,...,K$, are used to enrich the stochastic basis and improve the accuracy of our method. When this enriching process is done, we obtain the data-driven stochastic basis functions, denoted by $\{A_i(\omega)\}_{i=0}^{m}$, where $A_0(\omega)\equiv1$ is used to represent the mean of the solution.
In the online stage, we solve Eq.\eqref{MsStoEllip_Eq} for any given forcing function by projecting the stochastic solution into the data-driven stochastic basis,
\begin{align}
    u(x,\omega) \approx \sum_{i=0}^{m} u_i(x)A_i(\omega),
\end{align}
Then, we use the Galerkin projection to derive a coupled deterministic system of PDEs for $u_i(x)$ and solve this system by any standard numerical method. Compared with the gPC method, the ratio of the computational complexities between DSM (in online stage) and gPC is of an order $O((m/N_p)^2)$, where $m$ and $N_{p}$ are the numbers of the basis used in the DSM and gPC, respectively. We expect that $m$ is much smaller than $N_p$ when the effective random dimension of the solution is small. More details about the construction of data-driven stochastic basis can be found in \cite{ChengHouYanZhang:13}

The DSM is very efficient in solving RPDEs with many different force functions. However, 
it does not fully explore the information hidden the Green's function of  the RPDEs \eqref{MsStoEllip_Eq}. Since we need to choose a set of trial functions when we construct the data-driven stochastic basis functions.

\section{Derivation of the multiscale data-driven stochastic basis functions} \label{sec:ModelReductionOC}
\noindent
In this section, we shall provide the detailed derivation of the multiscale data-driven stochastic basis functions. We first introduce the idea of constructing adaptive basis functions using an optimization approach for a deterministic elliptic operator. Then, we discuss the derivation of the multiscale data-driven stochastic basis functions using the optimization approach, which provides an effective model reduction method for the stochastic multiscale elliptic PDEs \eqref{MsStoEllip_Eq}.

\subsection{Localized basis functions for elliptic operator} \label{sec:OCelliptic}
\noindent
Consider the deterministic elliptic equation with the homogeneous Dirichlet boundary conditions
\begin{align}
\mathcal{L}u(x) \equiv -\nabla\cdot(a(x)\nabla u(x)    = f(x), \quad x\in D,  \label{DeterEllip_Eq}
\end{align}
where the force function $f(x)\in L^{2}(D)$. For a self-adjoint, positive definite operator $\mathcal{L}$, the Eq.\eqref{DeterEllip_Eq} has a unique solution, denoted by $\mathcal{L}^{-1}u(x)$. In practice,
Eq.\eqref{DeterEllip_Eq} can be solved using numerical methods, such as a finite element method or finite difference method. When the operator $\mathcal{L}$ contains multiscale features, i.e,
$\mathcal{L}(\cdot)\equiv -\nabla\cdot(a^{\epsilon}(x)\nabla \cdot) $, where $0<\epsilon\ll1$ is the small-scale parameter, one can  construct multiscale basis functions to solve it efficiently \cite{HouWu:97}.
Recently, much effort has been devoted to build localized basis functions that give optimal approximation property of the solution operator. The basis functions are energy minimizing functions on local patches and provide optimal
convergence rate for elliptic problems in the energy norm, see \cite{BabuskaLipton:2011,Peterseim:2014,HouPengchuan:2017,Owhadi:2017}.

To construct such localized basis functions,  we first partition the physical domain $D$ into a set of regular coarse elements with mesh size $H\gg \epsilon$. For example, we divide $D$ into a set of
non-overlapping triangles $\mathcal{T}_{H}=\{K_e\}_{e=1}^M$ such that no vertex of one triangle lies in the interior of the edge of another triangle. In each element $K$, we define a set of nodal basis $\{\varphi_{j,K},j=1,...,d\}$ with $d$ being the number of nodes of the element. We neglect the subscript $K$ for notational simplicity. The functions $\varphi_{i}(x)$ are called measurement functions, 
which are chosen as the characteristic functions on each coarse element in \cite{HouPengchuan:2017,Owhadi:2017} and piecewise linear basis functions in \cite{Peterseim:2014}.
In \cite{LiZhangCiCP:18}, we find that using the nodal basis functions will reduce the error so we 
use the same setting in this paper. 
  
Let $\mathcal{N}$ denote the set of interior vertices of $\mathcal{T}_{H}$ and $N_{x}$ be the number of interior vertices. For every vertex $x_i\in\mathcal{N}$, let $\varphi_{i}(x)$ denote the corresponding nodal basis function, i.e., $\varphi_{i}(x_j)=\delta_{ij}$. For now, we assume that all the nodal basis functions $\varphi_{i}(x)$ are continuous across the boundaries of the elements, so that $$V^{H}=\{\varphi_{i}(x):i=1,...,d \}\subset H_{0}^{1}(D).$$
In order to obtain localized basis functions, we define local patches of coarse elements. For $r>0$, let $S_r$
be the union of the elements $K_e$ intersecting the ball $B(x_i,r)$ which is centered at $x_i\in\mathcal{N}$ and of radius $r$. Let $\psi_i^{loc}(x)$ be the minimizer of the following optimization problem:
\begin{align}
\psi_i^{loc}& =\underset{\psi \in H_{0}^{1}(D)}{\arg\min} ||\psi||_a^2    \label{OC_MsBasis_Obj}\\
 \text{s.t.}\ &\int_{S_r}\psi \varphi_{j}= \delta_{i,j},\ \forall 1\leq j \leq N_{x}, \label{OC_MsBasis_Cons1}\\
&\psi(x)= 0, \ x\in D\backslash S_r.\label{OC_MsBasis_Cons2}
\end{align}
In the objective function, $||\psi||_a$ is the energy norm associated with the differential operator $\mathcal{L}$
defined by $||\psi||_a^{2}=\int_{D}\nabla\psi^{T}(x)a(x)\nabla\psi(x)dx$.  We neglect the superscript $\epsilon$ for notational simplicity. In \eqref{OC_MsBasis_Cons2}, we have used the fact that the basis function $\psi$ defined on the whole domain has exponential decay property so that we can localize the basis function $\psi$ to its associated patches $S_r$ and set $\psi(x)=0$, $x\in D\backslash S_r$. On a uniform mesh of size $H$, if we choose the diameter $r$ of each patch to be $H\log(1/H)$, we will obtain a convergence rate of order $O(H)$ in the energy norm.
We have incorporated the boundary condition of the elliptic problem into the above optimization problem through the definition of the solution space $H_{0}^{1}(D)$ and the energy norm $||\cdot||_a$. When we obtain the localized basis functions, we  use the Galerkin method to solve Eq.\eqref{DeterEllip_Eq}.


Recall that the  number of localized basis functions is equal to the number of interior vertices. We can obtain the localized basis functions $\psi_i^{loc}$, $i=1,...,N_{x}$, by solving the optimization problem \eqref{OC_MsBasis_Obj}-\eqref{OC_MsBasis_Cons2}. In general, we cannot solve this  optimization problem analytically as it is an optimization problem in infinite dimensional space.  Thus, we have to resort to numerical methods.
In the finite element framework, we partition the physical domain $S_r$ into a set of non-overlapping fine triangles with size $h \ll \epsilon$. Then, we use standard finite element method to discretize $\psi_{i}$, $\varphi_{j}$, $1\leq i,j\leq N_{x}$. In the discrete level, the optimization problem \eqref{OC_MsBasis_Obj}-\eqref{OC_MsBasis_Cons2} is reduced to a constrained quadratic optimization problem, which can be efficiently solved using Lagrange multiplier methods.

\begin{remark}
Since the localized basis functions are discretized on the fine mesh have already resolved the small-scale  variations in the problem, they can incorporate the small-scale features of the problem into the solutions obtained on the coarse mesh. Therefore, the localized basis functions effectively compress the multiscale solution space and provide a robust upscaling method.
\end{remark}

\begin{remark}
For a self-adjoint, elliptic different operator $\mathcal{L}$ and its corresponding energy norm, the constrained quadratic optimization problem obtained from the numerical discretization of \eqref{OC_MsBasis_Obj}-\eqref{OC_MsBasis_Cons2} is a  convex optimization problem. The convexity property makes the problem easy to solve since a local minimum must be the global minimum.
\end{remark}

\subsection{Compression of the random space} \label{sec:CoarseningTensorSpace}
\noindent
We shall derive localized multiscale data-driven stochastic basis functions that can efficiently compress the solution space of multiscale elliptic PDEs with random coefficients. We consider the following problem posed on a bounded domain $D$ and subject to a Dirichlet boundary condition,
\begin{align}
-\nabla\cdot(a^{\varepsilon}(x,\omega)\nabla u^{\varepsilon}(x,\omega)) & = f(x), \quad x\in D, \omega\in\Omega,  \label{MsStoEllip_OCbasis_Eq} \\
u^{\varepsilon}(x,\omega)&= 0, \quad x\in \partial D, \label{MsStoEllip_OCbasis_BC}
\end{align}
where the physical domain $D$ is assumed to be a convex polygon domain in $R^d$, $d=2,3$, and $\Omega$ is a sample space. The forcing function $f(x)$ is assumed to be in $L^2(D)$ (not just $H^{-1}(D)$) because this is necessary for the compactness of the solution space. We also assume that the problem is uniformly elliptic almost surely, namely, there exist $a_{\min}, a_{\max}>0$, such that
\begin{align}
	P(\omega\in \Omega: a(x, \omega)\in [a_{\min},a_{\max}], \forall x \in D) = 1.
\label{asUniformlyElliptic}
\end{align}
Note that we do not make any assumption on the regularity of the coefficient $a(x)\in L^\infty(D)$, which can be arbitrarily rough for each realization $a(x,\omega)$. Furthermore, we assume $a(x,\omega)$ is parameterized in terms of $r$ independent random variables,  i.e., $a(x,\omega) =  a(x,\xi_{1}(\omega),...,\xi_{r}(\omega))$, then
according to the Doob-Dynkin lemma the solution can be represented by these random variables, i.e.,
$u(x,\omega) = u(x,\xi_{1}(\omega),...,\xi_{r}(\omega))$.

Before we proceed the construction of the multiscale data-driven stochastic basis, we introduce the concept of
compression of the random space.  To demonstrate the main idea, let us first consider the localized basis functions and the compression of the physical space.  The localized basis functions $\psi_i^{loc}(x)$, $i=1,...,N$ are associated with the interior vertices of  coarse elements with mesh size $H$. However, when we solve the problem \eqref{OC_MsBasis_Obj}-\eqref{OC_MsBasis_Cons2} numerically, each basis function $\psi_i^{loc}(x)$ is represented by a set of finite element basis functions defined on $S_r$, i.e., $\psi_{i}^{loc}(x)=\sum_{s=1}^{N_s}b_s^{i}\lambda_{s}(x)$, where $\lambda_{s}(x)$ are defined on  fine elements with mesh size $h$ and $N_s$ is the number of fine-scale finite element basis functions. The small-scale information has already been captured by the basis functions $\psi_i^{loc}(x)$.

In the random space, we also have a similar structure. Consider a probability space $\{\Omega,\mathfrak{F},P\}$ and $\xi(\omega) = \bkr{\xi_1(\omega), \xi_2(\omega), \cdots, \xi_{r}(\omega)}$ is a set of independent identically distributed (i.i.d.) random variables on $\Omega$. From this parametrization, we define a set of polynomial basis functions, $\hmpn_{\minda}(\xi(\omega))$, $\minda \in \sJ_{r}^{\infty}$. By the  Cameron-Martin theorem \cite{cameron:47}, any random variable $X(\omega)$ with finite variance can be represented as $X(\omega) = \sum_{\minda \in \sJ_{r}^{\infty}} c_{\minda}\hmpn_{\minda}(\xi(\omega)) \approx  \sum_{\minda \in \sJ_{r}^{p}} c_{\minda} \hmpn_{\minda}(\xi(\omega))$. In general, the polynomial basis functions can be applied to most random systems.  However, the number of the basis functions grows exponentially fast with respect to $r$ and $p$.

Our recent studies on the elliptic PDEs with random coefficients show that when the random coefficients have some regularity in the stochastic space and the force function $f(x)$ is in $L^2(D)$, the solutions have some sparse or low-dimensional representation. Therefore, we can compress the dimension of the solution space by constructing data-driven stochastic basis functions \cite{ChengHouYanZhang:13,ZhangHuHouLinYanCiCP:14,ZhangCiHouMMS:15,ZhangHouLiu:15}.
Compared to the polynomial basis functions, the data-driven stochastic basis functions are more efficient in solving elliptic PDEs with random coefficients.




\subsection{Construct multiscale data-driven stochastic basis functions} \label{sec:ConstructDSMbasis}
\noindent
In this section, we shall propose a new approach to compress the physical and random space simultaneously. In the physical space, we use the same notations defined in section \eqref{sec:OCelliptic}. The key is to construct a set of multiscale data-deriven stochastic basis functions.
In Eq.\eqref{OC_MsBasis_Cons2}, the nodal basis functions $\varphi_{j}(x)$,  are defined on coarse elements.
The number of coarse elements controls the degree-of-freedom that we want to compress in the physical space.

In the random space, we choose a set of low-order polynomials, e.g.,  $\{\hmpn_{\minda}(\xi(\omega)),\bkl{\alpha}=\sum_{i=1}^{r} \alpha_i \le p\}$, $p$ is small and the total number is $N_{\xi}$. For notation simplicity, we reorder this set of low-order polynomials
as $\{\hmpn_{l}(\xi(\omega)), 1\leq l \leq N_{\xi}\}$.
Let $\psi_{i,k}(x,\xi(\omega))$ be the minimizer of the following optimization problem, where indices $i$ and $k$ are associated with physical and random dimensions
\begin{align}
& \psi_{i,k}(x,\xi(\omega)) =\underset{\psi \in H_{0}^{1}(D)\otimes L^{2}(\Omega)}{\arg\min}
||\psi(x,\xi(\omega))||_a^2    \label{OC_MsStocBasis_Obj}\\
\text{s.t.}\ & \mathbb{E}\big[\int_{D}\psi(x,\xi(\omega))\varphi_{j}(x)\hmpn_{l}(\xi(\omega))
 dx\big]= \delta_{i,j}\delta_{k,l},\  \forall 1\leq j \leq N_{x},
 1\leq l \leq N_{\xi}, \label{OC_MsStocBasis_Cons1}
\end{align}
where the number of the basis functions is $N_{x}N_{\xi}$. In the objective function \eqref{OC_MsStocBasis_Obj}, $||\psi||_a$ is the energy norm associated with the differential operator $\mathcal{L}$ defined by $||\psi||_a^{2}=\mathbb{E}\big[\int_{D}\nabla\psi^{T}(x,\omega)a(x,\omega)\nabla\psi(x,\omega)dx\big]$. In the constraint \eqref{OC_MsStocBasis_Cons1}, the condition $\delta_{k,l}$ ensures that different $\psi_{i,k}$ with different $k$ are uncorrelated, which is important in the optimality approximation property stated in the Lemma \ref{LemmaOptimalApproximation} and Theorem \ref{TheoremOptimalApproximation}.  We neglect the superscript $\epsilon$ for notational simplicity. The boundary condition of the elliptic problem has already been incorporated in the above optimization problem through the definition of the solution space $H_{0}^{1}(D)\otimes L^{2}(\Omega)$ and the corresponding energy norm $||\cdot||_a$. Under the uniformly elliptic assumption, our multiscale stochastic basis functions $\psi_{i,k}(x,\xi(\omega))$ still maintain the energy minimizing and exponential decay property in physical space. The proof will be provided later. 

We solve the optimization problem \eqref{OC_MsStocBasis_Obj}-\eqref{OC_MsStocBasis_Cons1} using numerical methods. Specifically, we apply the stochastic finite element method (SFEM) \cite{Ghanem:91} to discretize $\psi_{i,k}(x,\xi(\omega))$ and represent
\begin{align}
\psi_{i,k}(x,\xi(\omega)) = \sum_{s=1}^{N_{h}}\sum_{\minda \in \sJ_{r}^{p}}
b_{s,\minda}^{i,k}\lambda_{s}(x)\hmpn_{\minda}(\xi(\omega)),  \label{RepresentOCMaStocBasis}
\end{align}
where $\lambda_{s}(x)$ are FEM nodal basis functions defined on fine elements with mesh size $h$, $N_{h}$ is the number of FEM nodal basis, and $\hmpn_{\minda}(\xi(\omega)$ are the high-order polynomials basis with cardinality $\bkl{\sJ_{r}^{p}}$. In the discrete level, the optimization problem \eqref{OC_MsStocBasis_Obj}-\eqref{OC_MsStocBasis_Cons1} is reduced to a constrained quadratic optimization problem, which can be efficiently solved using the Lagrange multiplier method. After we obtain the basis functions $\{\psi_{i,k}(x,\xi(\omega))\}$, we can use the Galerkin method to solve Eq.\eqref{MsStoEllip_OCbasis_Eq} for many different force functions.

The SFEM is an efficient method to solve elliptic PDEs with random coefficients. However, the size of the coupled linear system grows dramatically when we increase the degree of freedom in  the random space and/or the physical space, which limits its scope in tackling more challenging high-dimensional SPDE and RPDE problems. With our multiscale data-driven stochastic basis, we compress both the random space and physical space so that we can efficiently solve the Eq.\eqref{MsStoEllip_OCbasis_Eq}.

\subsection{Exponential decay of the basis functions in physical space} \label{sec:ExponentialDecay}
\noindent
In this section we shall show that the basis functions $\psi_{i,k}(x,\xi(\omega))$ obtained by the optimization problem \eqref{OC_MsStocBasis_Obj}-\eqref{OC_MsStocBasis_Cons1} have exponential decay property with respect to node $x_i$ in the physical space. Recall that any feasible solution has a gPC expansion $\psi(x,\xi(\omega)) = \sum_{\minda \in \sJ_{r}^{p}}
g_{\minda}(x)\hmpn_{\minda}(\xi(\omega)) $ with $ |\sJ_{r}^{p}| \gg N_{\xi} $.
We first fix $k$ and substitute $\psi(x,\xi(\omega))$ into the constraint \eqref{OC_MsStocBasis_Cons1} and
compute the expectation with respect to $\hmpn_{l}(\xi(\omega))$, $1\leq l \leq N_{\xi}$. We know
the basis functions $\psi_{\cdot,k}(x,\xi(\omega))$ satisfy
\begin{align}
\psi_{\cdot,k}(x,\xi(\omega))&=g_{\cdot,k}(x)\hmpn_{k}(\xi(\omega))
+g_{\cdot,N_{\xi+1}}(x)\hmpn_{N_\xi+1}(\xi(\omega))+...
+g_{\cdot,|\sJ_{r}^{p}|}(x)\hmpn_{|\sJ_{r}^{p}|}(\xi(\omega)) \nonumber \\
&= g_{\cdot,k}(x)\hmpn_{k}(\xi(\omega)) + R_{\cdot,k}(x,\xi(\omega)), \label{RepresentOCMaStocBasis_Hk}
\end{align}
where $R_{\cdot,k}(x,\xi(\omega))$ denotes the remaining terms in $\psi_{\cdot,k}(x,\xi(\omega))$ except for
$g_{\cdot,k}(x)\hmpn_{k}(\xi(\omega))$. Eq.\eqref{RepresentOCMaStocBasis_Hk} reveals the structures of the basis $\psi_{\cdot,k}(x,\xi(\omega))$ in the
random space, which consists of $\hmpn_{k}(\xi(\omega))$ and a linear combination of high-order gPC basis.

To study the property of the basis $\psi_{i,k}(x,\xi(\omega))$ in the physical space, we consider
the optimization problem realization by realization.
Let $\xi(\omega^{*})$ be a random sample, then constraint \eqref{OC_MsStocBasis_Cons1} is reduced to the following form
\begin{align}
\int_{D}\big(g_{i,k}(x)\hmpn_{k}(\xi(\omega^{*}))+R_{i,k}(x,\xi(\omega^{*}))\big)\varphi_{j}(x)
dx= \delta_{i,j},\quad \forall 1\leq j \leq N_{x}.  \label{RepresentOCMaStocBasis_Hk2}
\end{align}
We consider the physical component first since the form \eqref{RepresentOCMaStocBasis_Hk2} almost surely 
satisfies the constraint in the random space. For a deterministic elliptic PDE, it has been proven that  $\psi_{\cdot,k}(x,\xi(\omega^{*}))=g_{i,k}(x)\hmpn_{k}(\xi(\omega^{*}))+R_{i,k}(x,\xi(\omega^{*})$ will decay exponentially fast away from physical node $x_i$ \cite{Peterseim:2014,Owhadi:2017,HouPengchuan:2017}. Therefore, we know that $\psi_{\cdot,k}(x,\xi(\omega))$ has the exponential decay property in the physical space almost surely. Furthermore, we get that each gPC expansion coefficient (e.g., $g_{i,k}(x)$ or $g_{i,N_{\xi+n}}(x)$, $n\geq1$) will decay exponentially fast away from physical node $ x_i $. Finally, we reach to the following proposition.
\begin{proposition}
	Consider the construction of the multiscale data-driven stochastic basis functions in  \eqref{OC_MsStocBasis_Obj}-\eqref{OC_MsStocBasis_Cons1}, let
	$H$ be the mesh size of the  coarse elements,  $S_r$ be the union of the elements $K_e$ intersecting the ball $B(x_i,r)$ which is centered at $x_i\in\mathcal{N}$ and of radius $r>0$.
	The basis function $\psi_{i,k}(\cdot,\xi(\omega))$ satisfies the following property,
	\begin{align}
		\int_{D\cap(B(x_i,r))^{c}}
		\mathbb{E}\big[\nabla\psi_{i,k}^{T}(\cdot,\xi(\omega))a(\cdot,\omega)\nabla\psi_{i,k}(\cdot,\xi(\omega))\big]dx \leq
		e^{1-\frac{r}{lH}}\int_{D}
		\mathbb{E}\big[\nabla\psi_{i,k}^{T}(\cdot,\xi(\omega))a(\cdot,\omega)\nabla\psi_{i,k}(\cdot,\xi(\omega))\big]dx,
		\label{ExponentialDecayStochastic}
	\end{align}
	with $l=1+\frac{e}{\pi}\sqrt{\frac{\lambda_{max}}{\lambda_{max}}}(1+2^{3/2}(2/\delta)^{1+d/2})$ and
	$0<\delta<1$ is a parameter such that the coarse element contains a ball with radius $\delta H$.
\end{proposition}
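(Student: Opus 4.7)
The plan is to lift the deterministic exponential decay result for energy-minimizing multiscale basis functions \cite{Peterseim:2014,Owhadi:2017,HouPengchuan:2017} to the stochastic setting by exploiting (a) the uniform ellipticity assumption \eqref{asUniformlyElliptic}, which makes the stochastic energy norm equivalent to the standard norm on $H_0^1(D)\otimes L^2(\Omega)$, and (b) the tensor-product structure of the constraints $\varphi_j(x)\hmpn_l(\xi)$. I would begin by deriving the Karush-Kuhn-Tucker conditions for \eqref{OC_MsStocBasis_Obj}-\eqref{OC_MsStocBasis_Cons1}: the optimal $\psi_{i,k}$ is orthogonal, in the energy inner product $\langle\phi,\psi\rangle_a := \mathbb{E}\big[\int_D \nabla\phi^T a \nabla\psi\,dx\big]$, to the admissible perturbation space $\mathcal{V}_0 := \{\phi \in H_0^1(D)\otimes L^2(\Omega):\ \mathbb{E}[\int_D \phi\,\varphi_j\hmpn_l\,dx]=0\ \forall j,l\}$.

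The core of the argument is a Caccioppoli-type iteration on concentric annuli. For $m\geq 1$, let $S_m := S_{mH}$ and fix a Lipschitz cut-off $\eta_m$ with $\eta_m\equiv 0$ on $S_{m-1}$, $\eta_m\equiv 1$ outside $S_m$, and $\|\nabla\eta_m\|_\infty \leq C/H$. The natural competitor $\eta_m\psi_{i,k}$ is not in $\mathcal{V}_0$ because it violates the constraint for those $j$ with $x_j$ in the buffer $S_m\setminus S_{m-1}$. I would correct this by constructing a local $z_m = \sum_{j,l}c^m_{j,l}\,\chi_j(x)\hmpn_l(\xi)$, supported in $S_m\setminus S_{m-1}$, where $\chi_j$ is a bubble on the coarse element around $x_j$, chosen so that $\eta_m\psi_{i,k}-z_m \in \mathcal{V}_0$. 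The coefficients $c^m_{j,l}$ are determined by a small linear system whose invertibility and conditioning come from the fact that each coarse element contains a $\delta H$-ball (from which the factor $(2/\delta)^{1+d/2}$ in $l$ arises) together with the orthonormality of $\{\hmpn_l\}$.

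Applying the KKT orthogonality to the admissible perturbation $\eta_m\psi_{i,k}-z_m$, expanding $\nabla(\eta_m\psi_{i,k}) = \eta_m\nabla\psi_{i,k}+\psi_{i,k}\nabla\eta_m$, and absorbing the commutator contribution via Young's inequality (where the ratio $\sqrt{\lambda_{\max}/\lambda_{\min}}$ and the constant $e/\pi$ enter after optimizing the split), I expect the recursion $E_m \leq \theta\, E_{m-1}$ with $E_m := \mathbb{E}\big[\int_{D\setminus S_m}\nabla\psi_{i,k}^T a\nabla\psi_{i,k}\,dx\big]$ and $\theta = 1 - 1/l$. Iterating over $m = \lceil r/H\rceil$ steps yields $E_m \leq (1-1/l)^{r/H}E_0 \leq e^{1-r/(lH)}\,E_0$, which is precisely \eqref{ExponentialDecayStochastic}.

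The hard part is the construction and energy control of the stochastic corrector $z_m$: the mixed constraints couple physical nodes with stochastic modes, so I must verify that the tensor-product ansatz produces a uniquely solvable, well-conditioned local system whose energy is bounded by $C\,\mathbb{E}\big[\int_{S_m\setminus S_{m-1}}\nabla\psi_{i,k}^T a\nabla\psi_{i,k}\,dx\big]$ with constants independent of $r$ and of the realization. Because the stochastic variables enter the constraints only through the orthonormal family $\{\hmpn_l\}$, the corrector problem decouples block-wise into a physical sub-problem for each mode $l$, after which the remaining estimates reduce, realization-by-realization, to those already established in the deterministic setting of \cite{HouPengchuan:2017,Owhadi:2017}, with the uniform ellipticity \eqref{asUniformlyElliptic} used to pass back to expectations.
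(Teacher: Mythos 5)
Your proposal is correct in outline and rests on the same underlying machinery the paper invokes (the iterative Caccioppoli-type argument of Owhadi and of Hou--Zhang), but you execute it along a genuinely different route than the paper does. The paper gives essentially no proof of its own: it argues that for each fixed realization $\xi(\omega^{*})$ the constraint \eqref{OC_MsStocBasis_Cons1} reduces to a deterministic moment condition, asserts that the deterministic decay result therefore applies almost surely, and then takes expectations, deferring the actual estimate to the cited deterministic reference. You instead run the Caccioppoli iteration directly in the tensor space $H_{0}^{1}(D)\otimes L^{2}(\Omega)$: the first-order optimality of \eqref{OC_MsStocBasis_Obj}--\eqref{OC_MsStocBasis_Cons1} gives orthogonality of $\psi_{i,k}$ to the admissible perturbation space in the \emph{expected} energy inner product, you build the cut-off competitor and the stochastic corrector $z_m$ there, and you derive the recursion $E_m\leq(1-1/l)E_{m-1}$ for the expected annular energies. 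This is arguably the more defensible version of the argument: the paper's realization-by-realization reduction quietly assumes that $\psi_{i,k}(\cdot,\xi(\omega^{*}))$ is an energy minimizer of the corresponding deterministic problem for a.e.\ $\omega^{*}$, which does not follow from minimality of the expected energy (the coefficient $a(x,\omega)$ couples the gPC modes in the energy, even though the constraints decouple). Your observation that the corrector system decouples block-wise in $l$ by orthonormality of $\{\hmpn_l\}$, while the energy bound on $z_m$ only needs the uniform ellipticity \eqref{asUniformlyElliptic} to compare the expected energy with the unweighted tensor norm, is exactly what is needed to make the lift rigorous. The one point to flag is cosmetic: the constant in the statement contains $\sqrt{\lambda_{max}/\lambda_{max}}$, which is evidently a typo for $\sqrt{\lambda_{max}/\lambda_{min}}$, and your derivation correctly produces the latter ratio.
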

The proof for the deterministic case is given in \cite{Owhadi:2017}, which is based on an iterative Caccioppoli-type argument. This proposition enables us to localize the basis functions so the corresponding stiffness matrix is sparse. Specifically, we solve the following optimization problems,
\begin{align}
& \psi_{i,k}^{loc}(x,\xi(\omega)) =\underset{\psi \in H_{0}^{1}(D)\otimes L^{2}(\Omega)}{\arg\min}
||\psi(x,\xi(\omega))||_a^2    \label{OC_MsStocLocalBasis_Obj}\\
\text{s.t.}\ & \mathbb{E}\big[\int_{S_r}\psi(x,\xi(\omega))\varphi_{j}(x)\hmpn_{l}(\xi(\omega)
 dx\big]= \delta_{i,j}\delta_{k,l},\  \forall 1\leq j \leq N_{x},
 1\leq l \leq N_{\xi}, \label{OC_MsStocLocalBasis_Cons1}\\
&\psi(x,\xi(\omega))= 0, \ x\in D\backslash S_r.\label{OC_MsStocLocalBasis_Cons2}
\end{align}
where $S_r$ is the union of the elements $K_e$ intersecting the ball $B(x_i,r)$ which is centered at $x_i\in\mathcal{N}$ and of a radius $r$. The construction \eqref{OC_MsStocLocalBasis_Obj}-\eqref{OC_MsStocLocalBasis_Cons2} enable us to obtain basis functions that capture the localized random structure in the solution space. To illustrate, we consider the problem \eqref{MsStoEllip_OCbasis_Eq}-\eqref{MsStoEllip_OCbasis_BC} on $D=[0,1]\times[0,1]$ with a coefficient that has a localized random structure, i.e., 
\begin{align}
a^{\varepsilon}(x,y,\omega) =0.1 + \frac{2+p\sin{2\pi x}/\varepsilon}{2-p\cos{2\pi y}/\varepsilon}\xi(\omega)\textbf{1}_{D_1}(x,y), \label{MsDSM_NumEX3_Coef}
\end{align}
where $p=1.5$, $\varepsilon=1/31$, $\xi(\omega)\in U[0,1]$, $\textbf{1}_{D_1}(x,y)$ is an indicator function, and
$D_1 = [1/8,3/8]\times [1/8,3/8]$. Obviously the random structure in $a^{\varepsilon}(x,y,\omega)$ is localized in a sub-domain.

In Fig.\ref{fig:ProfilesLocalBasis}, we show the profiles of multiscale data-driven stochastic basis functions associated with two different coarse vertices $x_{near}=(\frac{1}{8},\frac{1}{8})$ and $x_{far}=(\frac{3}{8},\frac{7}{8})$. For simplicity, we introduce the following notations, $\psi_{L}^{near}(x,y)$, $\psi_{H}^{near}(x,y)$, $\psi_{L}^{far}(x,y)$, and $\psi_{H}^{far}(x,y)$. Here, `near' (`far') means the basis is associated with the node $x_{near}$ ($x_{far}$), which is near (far from) the random region $D_1$. `L' means the projection of the basis function on a polynomial of order $0$, while the `H' means the projection of the basis function on a polynomial of order $5$. In our experiment, we find that $\psi_{H}^{near}(x,y)$ still has large magnitude as it is close to $D_1$, but $\psi_{H}^{far}(x,y)$ has a negligible magnitude as it is far away. Therefore, our method automatically captures localized stochastic structures in the solution space, which allows us to further reduce the computational cost. 
\begin{figure}[tbph]
	\centering
	\includegraphics[width=0.60\textwidth]{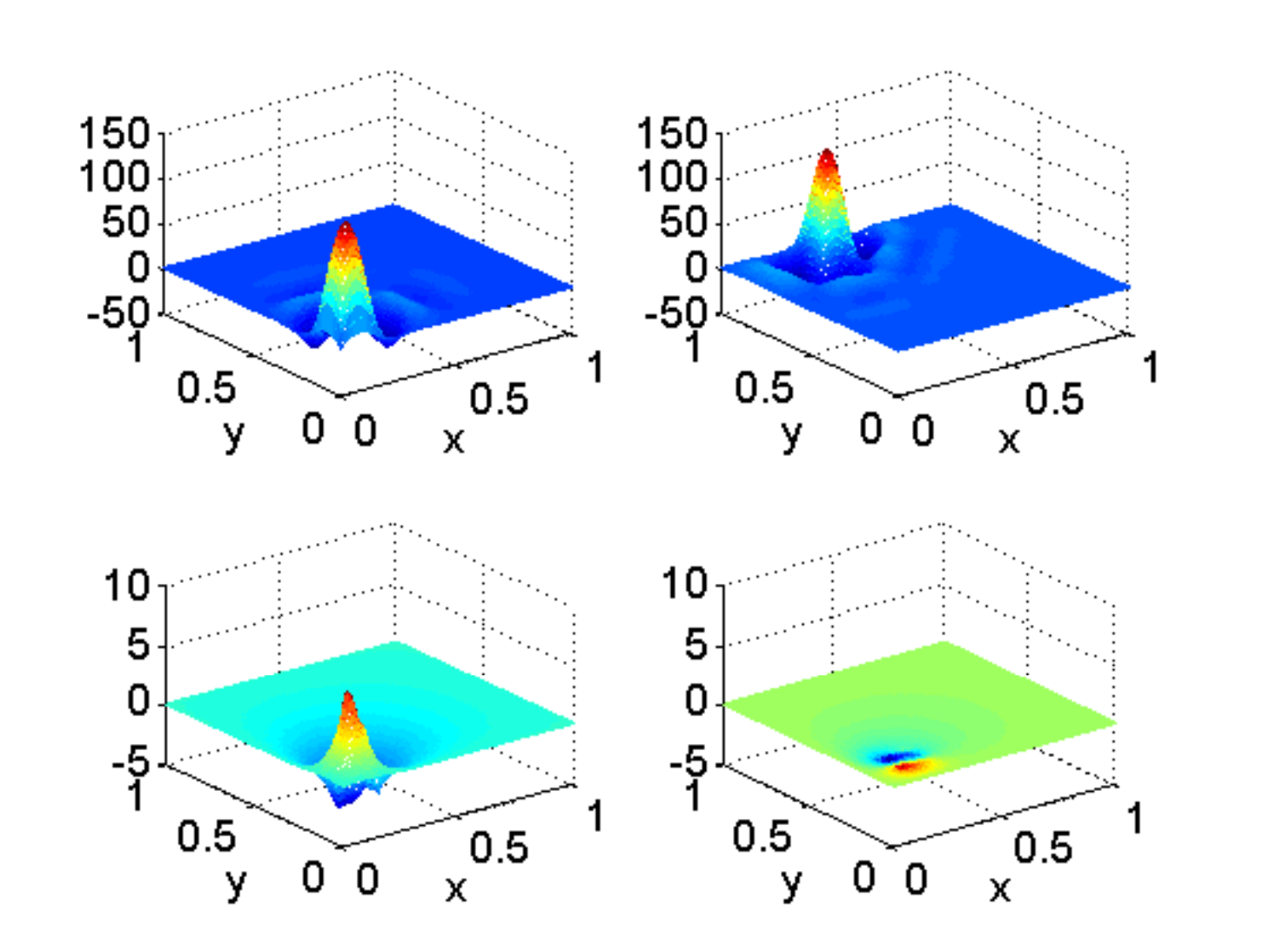}
	\caption{Profiles of the multiscale data-driven stochastic basis functions. Upper left: $\psi_{L}^{near}(x,y)$.  Lower left: $\psi_{H}^{near}(x,y)$. Upper right: $\psi_{L}^{far}(x,y)$.  Lower right: $\psi_{H}^{far}(x,y)$.
		Here, `near' (`far') means the basis is associated with the node $x_{near}$ ($x_{far}$), `L' (`H') means the projection of the basis function on a low (high) order polynomial basis.}
	\label{fig:ProfilesLocalBasis}
\end{figure}

\subsection{Optimal approximation property of the basis functions} \label{sec:VariationalProperties}
\noindent
We shall show that the basis functions $\psi_{i,k}(x,\xi(\omega))$ have some remarkable optimal approximation property that can be used for practical computation. First, we define the tensor basis $\phi_{j,l}(x,\xi(\omega))=\varphi_{j}(x)\hmpn_{l}(\xi(\omega)$, where $\varphi_{j}(x)$ and $\hmpn_{l}(\xi(\omega)$, $1\leq j \leq N_{x}, 1\leq l \leq N_{\xi}$ are defined in \eqref{OC_MsStocBasis_Cons1}. Then, we define the inner products in the tensor space as
\begin{align}
(u,v)& =\mathbb{E}\big[\int_{D}u(x,\omega)v(x,\omega)dx\big], \label{inner1}\\
(u,v)_a& =\mathbb{E}\big[\int_{D}\nabla u^{T}(x,\omega)a(x,\omega)\nabla v(x,\omega)dx\big].
\label{inner2}
\end{align}
Equipped with the above definitions, we know that \eqref{OC_MsStocBasis_Cons1} implies
$(\phi_{j,l},\psi_{i,k})=\delta_{i,j}\delta_{k,l}$. We define the solution space for Eq.\eqref{MsStoEllip_OCbasis_Eq} as
$V=H_{0}^{1}(D)\otimes L^{2}(\Omega)$ and the space spanned by the multiscale data-driven stochastic basis functions as
$V_{DSM}=span\{\psi_{i,k}(x,\xi(\omega))\}$. We also define the space
\begin{align}
V_0:=\{v\in V : (v,\phi_{j,l})=0, ~ 1\leq j \leq N_{x},
 1\leq l \leq N_{\xi}\}. \label{ResidualSpace}
\end{align}
Note that $V_0$ is the subspace of $V$. Throughout the paper we also assume that
the force functions belong to the space spanned by the basis $\varphi_i$, i.e. $ f(x)\in span\{\varphi_i\}_{i=1}^{N_x}$.
Namely, $f(x)$ can be approximated by $span\{\varphi_i\}_{i=1}^{N_x}$ up to an $O(H)$ error. Using the integration by parts and the definition of \eqref{inner2}\eqref{ResidualSpace}, we easily find that for every $v\in V_0$
\begin{align}
(\psi_{i,k},v)_a=0, \quad \quad \forall 1\leq i \leq N_{x},
 1\leq k \leq N_{\xi}. 
\label{orthogonal}
\end{align}
Based on the above definition,  we prove the following lemma
\begin{lemma}\label{LemmaOptimalApproximation}
Let $u(x,\omega)\in V$ and $u^{*}(x,\omega)=\sum_{i=1,k=1}^{N_x,N_\xi} u_{i,k}\psi_{i,k}(x,\xi(\omega))$
with $\psi_{i,k}(x,\xi(\omega))$ defined by \eqref{OC_MsStocBasis_Obj} and $u_{i,k}=\big(u(x,\omega),\phi_{i,k}(x,\xi(\omega))\big)$.
Then, we have
\begin{align}
\Big(u(x,\omega),u(x,\omega)\Big)_a=\Big(u^*(x,\omega),u^*(x,\omega)\Big)_a+
\Big((u-u^*)(x,\omega),(u-u^*)(x,\omega)\Big)_a.
\label{orthogonal_decomposition}
\end{align}
\end{lemma}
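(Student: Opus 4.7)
The statement is a Pythagorean-type orthogonal decomposition in the energy inner product, so the plan is to reduce it to the single identity $(u^{*},u-u^{*})_{a}=0$ and then expand $(u,u)_{a}=(u^{*}+(u-u^{*}),u^{*}+(u-u^{*}))_{a}$.

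The main step is to verify that $u-u^{*}\in V_{0}$, where $V_{0}$ is the subspace defined in \eqref{ResidualSpace}. Using the duality relation built into the constraint \eqref{OC_MsStocBasis_Cons1}, namely $(\phi_{j,l},\psi_{i,k})=\delta_{i,j}\delta_{k,l}$ with respect to the inner product \eqref{inner1}, I would compute
\begin{align*}
(u-u^{*},\phi_{j,l}) = (u,\phi_{j,l}) - \sum_{i=1}^{N_{x}}\sum_{k=1}^{N_{\xi}} u_{i,k}(\psi_{i,k},\phi_{j,l}) = u_{j,l} - u_{j,l} = 0,
\end{align*}
where the first equality uses linearity of the $L^{2}$-type pairing and the second uses the definition $u_{i,k}=(u,\phi_{i,k})$ together with the biorthogonality. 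This shows $u-u^{*}\in V_{0}$ for every $1\le j\le N_{x}$ and $1\le l\le N_{\xi}$.

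Once this membership is established, the orthogonality relation \eqref{orthogonal} proved just before the lemma gives $(\psi_{i,k},u-u^{*})_{a}=0$ for all $i,k$. Taking linear combinations yields $(u^{*},u-u^{*})_{a}=0$. Expanding the energy norm of $u=u^{*}+(u-u^{*})$ using bilinearity of $(\cdot,\cdot)_{a}$ and symmetry of the coefficient $a(x,\omega)$ then produces
\begin{align*}
(u,u)_{a} = (u^{*},u^{*})_{a} + 2(u^{*},u-u^{*})_{a} + (u-u^{*},u-u^{*})_{a} = (u^{*},u^{*})_{a} + (u-u^{*},u-u^{*})_{a},
\end{align*}
which is precisely \eqref{orthogonal_decomposition}.

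I do not expect any real obstacle: the proof is a direct consequence of the biorthogonal constraint imposed in the optimization problem and the Galerkin-type orthogonality \eqref{orthogonal}. The only subtlety worth flagging is that \eqref{orthogonal} itself is the non-trivial input; it encodes the optimality (first-order condition) of the constrained minimization \eqref{OC_MsStocBasis_Obj}--\eqref{OC_MsStocBasis_Cons1}, and without it the argument collapses. Everything else is bookkeeping with the two inner products \eqref{inner1}--\eqref{inner2}.
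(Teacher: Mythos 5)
Your proof is correct and follows essentially the same route as the paper: show $u-u^{*}\in V_{0}$ via the biorthogonality $(\phi_{j,l},\psi_{i,k})=\delta_{i,j}\delta_{k,l}$, invoke the orthogonality relation \eqref{orthogonal} to get $(u^{*},u-u^{*})_{a}=0$, and expand the energy norm. The only difference is that you spell out the membership $u-u^{*}\in V_{0}$, which the paper dismisses as obvious.
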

\begin{proof}
According to the definition, it is obvious that $(u-u^*)\in V_0$. Then use the orthogonal condition \eqref{orthogonal}, we know that $ \Big(u^*,u-u^*\Big)_a=0 $. Thus, $ \Big(u,u\Big)_a=\Big((u-u^*)+u^*,(u-u^*)+u^*\Big)_a=\Big(u^*,u^*\Big)_a+\Big((u-u^*),(u-u^*)\Big)_a$.
\end{proof}
Using the above lemma, we obtain the optimal approximation property of our multiscale data-driven stochastic basis functions.
\begin{theorem}\label{TheoremOptimalApproximation}
Let $u(x,\omega)$ be the exact solution to Eq.\eqref{MsStoEllip_OCbasis_Eq} and $u^*(x,\omega)$ be the numerical solution obtained using our method, i.e., $u^*(x,\omega)=\sum_{i=1,k=1}^{N_x,N_\xi} \big(u(x,\omega),\phi_{i,k}(x,\xi(\omega))\big)\psi_{i,k}(x,\xi(\omega))$. Then, we get the optimal approximation property
\begin{align}
\Big((u-u^*),(u-u^*)\Big)_a=\inf_{v\in V_{DSM}}\Big(u-v,u-v\Big). \label{OptimalApproximationProperty}
\end{align}
\end{theorem}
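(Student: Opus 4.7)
The plan is to establish that $u^\ast$ is the Galerkin projection of $u$ onto $V_{DSM}$ with respect to the energy inner product, and then deduce the best approximation property in the standard way.

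First I would verify that the residual $u-u^\ast$ lies in $V_0$. This is a direct computation from the biorthogonality built into the construction: by constraint \eqref{OC_MsStocBasis_Cons1}, we have $(\psi_{i,k},\phi_{j,l})=\delta_{i,j}\delta_{k,l}$, so
\begin{align*}
(u-u^\ast,\phi_{j,l}) = (u,\phi_{j,l}) - \sum_{i,k} u_{i,k}(\psi_{i,k},\phi_{j,l}) = u_{j,l} - u_{j,l} = 0,
\end{align*}
for every admissible pair $(j,l)$. Hence $u-u^\ast \in V_0$, and by the orthogonality property \eqref{orthogonal} this gives $(\psi_{i,k}, u-u^\ast)_a = 0$ for all basis functions, so $(w, u-u^\ast)_a = 0$ for every $w \in V_{DSM}$.

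Next I would exploit this Galerkin orthogonality to prove the best approximation bound. For an arbitrary $v \in V_{DSM}$, write $u - v = (u - u^\ast) + (u^\ast - v)$ and note $u^\ast - v \in V_{DSM}$. Expanding in the energy inner product and applying the orthogonality just established,
\begin{align*}
(u-v,u-v)_a = (u-u^\ast,u-u^\ast)_a + 2(u-u^\ast,u^\ast-v)_a + (u^\ast-v,u^\ast-v)_a = (u-u^\ast,u-u^\ast)_a + (u^\ast-v,u^\ast-v)_a.
\end{align*}
Since $(u^\ast - v, u^\ast - v)_a \geq 0$ and vanishes at $v = u^\ast \in V_{DSM}$, the infimum on the right-hand side of \eqref{OptimalApproximationProperty} is attained at $u^\ast$, proving the claim. (One should interpret the right-hand side of \eqref{OptimalApproximationProperty} as an energy-norm infimum, consistent with Lemma \ref{LemmaOptimalApproximation}.)

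There is no real obstacle here; the argument is the standard best-approximation consequence of Galerkin orthogonality. The only thing that needs any care is being explicit that the biorthogonal normalization \eqref{OC_MsStocBasis_Cons1} together with Lemma \ref{LemmaOptimalApproximation} (or equivalently the orthogonality relation \eqref{orthogonal}) is precisely what is needed for $u-u^\ast$ to be $a$-orthogonal to all of $V_{DSM}$, not merely to the particular combination $u^\ast$. Once that is checked, the Pythagorean expansion above is immediate.
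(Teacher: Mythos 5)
Your proof is correct and follows essentially the same route as the paper: it uses the biorthogonality from \eqref{OC_MsStocBasis_Cons1} to place $u-u^*$ in $V_0$, invokes the $a$-orthogonality \eqref{orthogonal} exactly as in Lemma \ref{LemmaOptimalApproximation}, and concludes via the standard Pythagorean/Galerkin best-approximation argument. You simply spell out the steps the paper leaves implicit, and your remark that the right-hand side of \eqref{OptimalApproximationProperty} should carry the subscript $a$ correctly identifies a typo in the statement.
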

\begin{proof}
The proof of \eqref{OptimalApproximationProperty} is simply based on the result obtained in the Lemma \ref{LemmaOptimalApproximation} and the fact that $u-u^{*}$ is orthogonal to the space $V_{DSM}$
with respect to the inner product $(\cdot,\cdot)_a$.
\end{proof}
\section{Error Analysis}\label{sec:Error Analysis}
\noindent
We shall analyze the error between the numerical solution obtained from our method, denoted by $u^{DSM}(x,\omega)$ and the exact solution $u(x,\omega)$. For notation simplicity, we omit the superscript $\varepsilon$. We choose the mesh size $h$ of the fine grids small enough so that the error between $u(x,\omega)$ and the FEM solution $u_h(x,\omega)$ is  negligible. Thus, we simply choose $u_h(x,\omega)$ as a reference solution. Before proceeding to the main analysis, let us first introduce some notations and assumptions.
\subsection{Some notations and assumptions}
\noindent
Throughout the section, we shall use $D$ to denote the spatial domain and $\Omega$ to denote the random event space.
Then in the tensor space $H_{0}^{1}(D)\otimes L^{2}(\Omega)$, we define the norm by
\begin{align}
||u||_{H_{0}^{1}(D)\otimes L^{2}(\Omega)}=\Big(\int_{\Omega}\big(\int_{D}|\nabla_{x} u(x,\xi(\omega))|^2 dx\big)dP(\omega)\Big)^{\frac{1}{2}},  \label{normH1L2}
\end{align}
where $P(\omega)$ is the probability distribution function of random variables. Moreover, the corresponding energy norm associated with coefficient $a(x,\xi(\omega))$ can be defined by
\begin{align}
||u||_{(a,H_{0}^{1}(D)\otimes L^{2}(\Omega))}=
\Big(\int_{\Omega}\big(\int_{D}  a(x,\xi(\omega))|\nabla_{x} u(x,\xi(\omega))|^2 dx\big) dP(\omega)\Big)^{\frac{1}{2}}, \label{EnergyNormH1L2}
\end{align}
where we use a subscript `$a$' in the norm notation to indicate that the norm is associated with $a(x,\xi(\omega))$. 
In addition, we need higher regularity in the random space when we estimate the convergence rate of our method in the random space. Let $D_{\xi}^{\nu}u(x,\cdot)$ denote the $\nu$-th order mixed derivatives of $u(x,\cdot)$ with respect to the variable $\xi=(\xi_1,...,\xi_r)$ in the random space, where $\nu=(\nu_1,...,\nu_r)$ and $\nu_i$ are nonnegative
integers. Then, we define the norm and the seminorm in the random space as follows,
\begin{align}
||u(x,\cdot)||^2_{H^p(\Omega)} =  \int_{\Omega} \sum_{|\nu|\leq p }|D_\xi^{\nu}u(x,\cdot)|^2 dP(\omega),\quad |u(x,\cdot)|^2_{H^p(\Omega)} =  \int_{\Omega} \sum_{|\nu|= p }|D_{\xi}^{\nu}u(x,\cdot)|^2 dP(\omega).
\label{normHp}
\end{align}
Finally, we assume the stability of the solution with respect to the random dimension  \cite{Charrier:2012,Grahamquasi:2015}.
\begin{assumption}
If $ u(x,\omega) $ is the solution to Eq.\eqref{MsStoEllip_OCbasis_Eq} with boundary condition Eq. \eqref{MsStoEllip_OCbasis_BC}, and $u(x,\cdot)\in H^{p}(\Omega), \forall x\in D $. Then, there exists two positive constants $C_{1}$, $C_{2}$ that depend on the values of $\lambda_{min}$ and $\lambda_{max}$ and the bound of the derivative of $a(x,\xi(\omega))$ with respect to $\xi(\omega)$ so that the following inequalities hold,
\begin{align}
||u(x,\cdot)||_{H^{p}(\Omega)}            &\leq C_{1}||f(x)||_{L^2(D)} , \quad \forall x \in D, \label{Stability1} \\
||\nabla_{x} u(x,\cdot)||_{H^{p}(\Omega)} &\leq C_{2}||f(x)||_{L^2(D)} , \quad \forall x \in D, \label{Stability2}
\end{align}
where $f(x)$ is the source term.
\end{assumption}
\noindent
The above assumption is satisfied if $a(x,\xi(\omega))$ satisfies certain regularity conditions. We refer the interested reader to \cite{cohen2010convergence} for more details.

\subsection{Error analysis}
\noindent
After introducing the necessary notations and assumption, we are in the position to proceed the error analysis. We shall consider errors from the spatial space and the random space approximation. Applying the triangle inequality, we divide the error into three parts
\begin{align}\label{TriangleInequalityMainError}
||u_{h}-u^{DSM}||\leq ||u_{h}-u_{h}^{gPC}||+||u_{h}^{gPC}-u_{H}^{gPC}||+||u_{H}^{gPC}-u^{DSM}||,
\end{align}
where $u_h$ refers to the reference solution obtained using the finite element method (FEM) on fine mesh with size $h$, $u_h^{gPC}$ is the gPC solution obtained using the same fine mesh as $u_h$, $u_H^{gPC}$ is gPC solution obtained using the new multiscale basis functions in the physical space (see \eqref{OC_MsStocBasis_Obj}-\eqref{OC_MsStocBasis_Cons1}), $u^{DSM}$ is the solution obtained using the same multiscale basis as $u_H^{gPC}$ but the dimension of the random basis is further reduced. We have assumed that the error between $u_h$ and the exact solution $u$ to Eq.\eqref{MsStoEllip_OCbasis_Eq} is negligible.

We first analyse the error between $u_h(x,\omega)$ and the solution $u_h^{gPC}(x,\omega)$. To illustrate the main idea, we assume the coefficient in Eq.\eqref{MsStoEllip_OCbasis_Eq} is parameterized by
one-dimensional  random variable $\xi(\omega)=\xi_1(\omega)$ that follows uniform distribution $U[-1,1]$ and the basis functions in gPC method are Legendre polynomials. But we emphasize that the convergence estimate \eqref{error_gPC} holds for general gPC methods if we tensorize the orthogonal polynomials and use the multi-index.
\begin{lemma}\label{lemma2}
Let $u_h(x,\omega)$ be the reference solution and $u_h^{gPC}(x,\omega) $ be the gPC solution using the same mesh. Then we get the convergence estimate as follows,
\begin{align}
||u_{h}-u_{h}^{gPC}||_{(a,H_{0}^{1}(D)\otimes L^{2}(\Omega))}\leq C_{p}N^{-p}||f(x)||_{L^{2}(D)},
\label{error_gPC}
\end{align}
where $N$ is the highest order of polynomial basis in the gPC method, $p$ is an integer that quantifies
the regularity of $u_h(x,\omega)$ in the random space, and $C_{p}$ is a constant that is independent of $N$ but depends on $a_{min}$ and $a_{max}$.
\end{lemma}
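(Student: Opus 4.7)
The plan is to combine Cé\'ea-type Galerkin quasi-optimality with a classical spectral approximation estimate for Legendre polynomials, and then absorb the result into the right-hand side through the stability assumption \eqref{Stability2}. First I would note that, because the bilinear form $(\cdot,\cdot)_a$ is symmetric and, under \eqref{asUniformlyElliptic}, uniformly coercive and continuous on $H_0^1(D)\otimes L^2(\Omega)$, both $u_h$ and $u_h^{gPC}$ satisfy the same variational problem, the latter restricted to the gPC trial/test subspace. Galerkin orthogonality therefore gives the best-approximation property
\begin{equation*}
\|u_h - u_h^{gPC}\|_{(a,H_0^1(D)\otimes L^2(\Omega))} \;=\; \inf_{v_N}\|u_h - v_N\|_{(a,H_0^1(D)\otimes L^2(\Omega))},
\end{equation*}
where the infimum is taken over functions of the form $v_N(x,\omega)=\sum_{n=0}^{N} v_n(x)\hmpo_n(\xi(\omega))$ with $v_n\in H_0^1(D)$ (the fine-mesh part is already present in both sides, so it does not enter the estimate).

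Next I would choose the concrete candidate $v_N$ to be the truncated Legendre expansion of $u_h$ in the random variable, i.e.\ $v_N(x,\omega)=\sum_{n=0}^{N}\bigl(\int_{-1}^{1}u_h(x,\xi)\hmpo_n(\xi)\,\tfrac12 d\xi\bigr)\hmpo_n(\xi(\omega))$. The classical one-dimensional Legendre truncation estimate says that for any $w(\cdot)\in H^p(\Omega)$,
\begin{equation*}
\Bigl\|w-\sum_{n=0}^{N}\widehat w_n \hmpo_n\Bigr\|_{L^2(\Omega)}^2 \;\le\; C\,N^{-2p}\,|w|_{H^p(\Omega)}^{2}.
\end{equation*}
Applying this pointwise in $x$ to $w(\cdot)=\partial_{x_j}u_h(x,\cdot)$ (which commutes with the $\xi$-projection because the latter acts only in $\omega$), summing over the spatial components, and integrating over $D$ yields
\begin{equation*}
\int_{\Omega}\!\!\int_{D}\bigl|\nabla_x(u_h-v_N)\bigr|^2 dx\,dP(\omega) \;\le\; C\,N^{-2p}\int_{D}|\nabla_x u_h(x,\cdot)|_{H^p(\Omega)}^{2}\,dx.
\end{equation*}
Multiplying by $a_{\max}$ using \eqref{asUniformlyElliptic} converts the $H_0^1\otimes L^2$ norm into the energy norm up to a constant depending on $a_{\max}$.

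Finally I would invoke the stability assumption \eqref{Stability2}, $\|\nabla_x u_h(x,\cdot)\|_{H^p(\Omega)} \le C_2\|f\|_{L^2(D)}$, to bound the right-hand integral by $C_2^2 |D|\,\|f\|_{L^2(D)}^2$; combining everything and absorbing constants into a single $C_p$ (which depends on $a_{\min},a_{\max}$ through coercivity and continuity, and on $p$ through the Legendre constant) gives the claimed bound \eqref{error_gPC}. The main subtlety I anticipate is the interchange of the spatial gradient and the Legendre projection used in the second step: it requires that $u_h(x,\cdot)$ has $H^p$ regularity in $\omega$ \emph{uniformly enough in $x$} so that $\nabla_x u_h$ itself lies in $L^2(D;H^p(\Omega))$, which is exactly what \eqref{Stability2} provides; tensorizing to the multivariate Legendre basis and writing the estimate with the multi-index $\minda\in\sJ_r^p$ is then straightforward but notationally heavier.
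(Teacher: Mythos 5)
Your proposal is correct and rests on the same analytic core as the paper's proof --- the spectral decay of Legendre coefficients for functions with $H^p(\Omega)$ regularity, combined with the stability assumption \eqref{Stability2} --- but it reaches that core by a different and, in one respect, more careful route. The paper simply identifies the gPC solution with the $L^2(\Omega)$ truncation, writing $u_h^{gPC}=P_N u_h$, and then estimates $\|u_h-P_Nu_h\|_{L^2(\Omega)}$ from scratch by exploiting the Sturm--Liouville identity $\mathscr{L}L_k=-k(k+1)L_k$, integrating by parts $l$ times to get $(u_h,L_k)=(-\tfrac{1}{k(k+1)})^{l}(\mathscr{L}^{l}u_h,L_k)$, and summing the tail via Parseval to obtain the $N^{-2l}=N^{-p}$ rate. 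You instead invoke Galerkin orthogonality (C\'ea in the symmetric coercive case) to reduce the error to a best-approximation problem in the energy norm and only then use the truncated Legendre expansion as a competitor, quoting the classical truncation estimate rather than deriving it. Your detour through C\'ea buys genuine rigor: the Galerkin gPC solution is the $a$-orthogonal projection of $u_h$, not its $L^2(\Omega)$ projection, so the paper's identification $u_h^{gPC}=P_Nu_h$ is strictly speaking a shortcut that your argument makes unnecessary (at the mild cost of the continuity/coercivity constants $a_{\max}/a_{\min}$ entering $C_p$). What the paper's self-contained Sturm--Liouville derivation buys, as its subsequent remark makes explicit, is that the same integration-by-parts mechanism transfers verbatim to other classical orthogonal polynomial families satisfying an eigenvalue equation of the form \eqref{Classical_Orthogonal_Polynomials}, whereas you would need to re-cite the corresponding approximation theorem for each family. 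Your closing observation about needing $\nabla_x u_h\in L^2(D;H^p(\Omega))$ to interchange the spatial gradient with the $\xi$-projection is exactly the role \eqref{Stability2} plays in the paper's final step as well.
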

\noindent
\begin{proof}
Let $ L_k(\xi(\omega)) $ be the Legendre polynomial of order $k$ and $S_{N}$ be the space spanned by Legendre polynomials of degrees at most $N$, i.e., $ S_{N}=span\{L_{k}(\xi(\omega))\}_{k=0}^{N}$. Let $P_{N}$ denote the projection operator on $S_{N}$. Specifically, we have the projection of $u_{h}(\omega)$ onto $S_{N}$ defined as $P_{N}u_{h}(\omega)=\sum\limits_{k=0}^{N}u_{k}L_{k}(\xi(\omega))$, where the coefficients $u_k=\frac{(u_{h},L_{k})}{(L_{k},L_{k})}$ and the inner product of two functions are defined as $(v,w)\equiv\int_{\Omega}v(\omega)w(\omega)dP(\omega)$. To estimate the decay rate in the projection coefficients,
we use the property that Legendre polynomials satisfy the Sturm-Liouville eigenvalue problem as follows,
\begin{align}
\mathscr{L}L_{k}(\xi(\omega))=\frac{d}{d\xi}\big((1-(\xi(\omega))^{2})\frac{d}{d\xi}
\big)L_{k}(\xi(\omega))=-k(k+1)L_{k}(\xi(\omega)).
\label{Sturm_Liouville_EigProblem}
\end{align}
Some simple calculations imply that
\begin{align}
(u_{h},L_{k}) = -\dfrac{1}{2k(k+1)}\int_{-1}^{1}u_{h}\mathscr{L}L_{k}dP(\omega) = -\dfrac{1}{k(k+1)}(\mathscr{L}u_{h},L_{k}).
\label{Sturm_Liouville_EigProblem}
\end{align}
Then, we repeat the above derivation and get $(u_{h},L_{k}) =(-\frac{1}{k(k+1)})^{l}(\mathscr{L}^{l}u_{h},L_{k})$, where
$l\geq 1$ is an integer. Finally, we obtain the error estimate of the projection approximation as
\begin{align}
||u_{h}(\omega)-P_{N}u_{h}(\omega)||^2_{L^2(\Omega)}   &=\sum\limits_{k=N+1}^{\infty}\frac{(u_{h},L_k)^2_{L^2(\Omega)}}{(L_k,L_k)_{L^2(\Omega)}}
=\sum\limits_{k=N+1}^{\infty}\frac{1}{(k(k+1))^{2l}||L_{k}||^2_{L^2(\Omega)}}(\mathscr{L}^{l}u_{h},L_{k})^2_{L^2(\Omega)}
\nonumber \\
& \leq N^{-4l}||\mathscr{L}^{l}u_{h}||^2_{L^2(\Omega)}  \leq C_pN^{-4l}||u_{h}||^2_{H^{2l}(\Omega)},
\label{ProjectionErrorEstimate}
\end{align}
where the Parseval's identity is used in the first equation. Using the gPC method to solve Eq.\eqref{MsStoEllip_OCbasis_Eq}, we know that $u_{h}^{gPC}(x,\omega)=P_{N}u_{h}(x,\omega)$. If we take the regularity index $p=2l$ and use the stability assumptions \eqref{Stability1}\eqref{Stability2} and integrate over the physical space $D$, we prove the Lemma.
\end{proof}
When the coefficient in Eq.\eqref{MsStoEllip_OCbasis_Eq} is parameterized by $r$ i.i.d. random variables
$\xi=(\xi_1,...,\xi_r)$, we use the multi-index  $ \minda=\bkr{\alpha_1, \cdots, \alpha_{r}},
0\leq \alpha_i \leq N_i, \alpha_i \in \NN $ to label the gPC basis and the multi-index $\nu=(\nu_1,...,\nu_r)$, $\nu_i \ge 0$, $\nu_i \in \NN$ to label the order of the mixed derivatives. We define the tensorized Legendre polynomials by $L_{\minda}(\xi(\omega))=\prod_{i=1}^{r}L_{\alpha_i}(\xi_i(\omega))$
and $S_{N}=span\{L_{\minda}(\xi(\omega))\}$. Let $P_{N}$ denote the projection operator on $S_{N}$, i.e.,
$P_{N}u_{h}(\omega)= u_{\minda}L_{\minda}(\xi(\omega))$, where the Einstein summation convention is used and
$N=\prod_{i=1}^{r}N_i$.
\begin{corollary}\label{corollary3}
Let $u_h(x,\omega)$ be the reference solution and $u_h^{gPC}(x,\omega)=P_{N}u_{h}(\omega)$ be the gPC solution using the same mesh. Then we get the convergence estimate as follows, 
\begin{align}
||u_h(\omega)-P_{N}u_h(\omega)||^2_{L^2(\Omega)}  \leq {(N_1^{-4\nu_1}N_2^{-4\nu_2}...N_r^{-4\nu_r})}||\mathscr{L}^{\nu}u_h||^2_{L^2(\Omega)}. \label{error_gPC_tensor}
\end{align}
If the highest order of the polynomials are the same in each random variable and let $|\nu|=\sum_{i=1}^{r}\nu_i$, 
we get
\begin{align}
||u_h(\omega)-P_{N}u_h(\omega)||^2_{L^2(\Omega)}  \leq N_{1}^{-4|\nu|}||\mathscr{L}^{\nu}u||^2_{L^2(\Omega)} \leq C_pN_{1}^{-4|\nu|}||u||^2_{H^{2|\nu|}(\Omega)}.  \label{error_gPC_tensor2}
\end{align}
\end{corollary}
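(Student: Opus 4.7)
The plan is to generalize the eigenvalue-based argument from Lemma \ref{lemma2} to the tensor product setting. First I would observe that the tensorized Legendre polynomials $L_\minda(\xi) = \prod_{i=1}^r L_{\alpha_i}(\xi_i)$ remain eigenfunctions of the mixed operator $\mathscr{L}^\nu = \prod_{i=1}^r \mathscr{L}_i^{\nu_i}$, where $\mathscr{L}_i = \frac{d}{d\xi_i}\left((1-\xi_i^2)\frac{d}{d\xi_i}\right)$, with eigenvalue $(-1)^{|\nu|}\prod_{i=1}^r (\alpha_i(\alpha_i+1))^{\nu_i}$. Integrating by parts $\nu_i$ times in each variable $\xi_i$ (the factor $1-\xi_i^2$ kills the boundary contributions) yields the multi-dimensional analogue of \eqref{Sturm_Liouville_EigProblem},
$$(u_h, L_\minda) = \prod_{i=1}^r\left(\frac{-1}{\alpha_i(\alpha_i+1)}\right)^{\nu_i}(\mathscr{L}^\nu u_h, L_\minda),$$
which is the key identity driving the whole argument.

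Next I would substitute this identity into Parseval's expansion of the projection error,
$$\|u_h - P_N u_h\|^2_{L^2(\Omega)} = \sum_{\minda \notin \sI_N}\frac{|(u_h, L_\minda)|^2_{L^2(\Omega)}}{\|L_\minda\|^2_{L^2(\Omega)}},$$
where $\sI_N = \{\minda : 0 \leq \alpha_i \leq N_i \text{ for all } i\}$ is the tensor-product truncation set. For any multi-index outside $\sI_N$, the coordinate $i$ where $\alpha_i > N_i$ contributes a decay factor $(\alpha_i(\alpha_i+1))^{-2\nu_i} \leq N_i^{-4\nu_i}$. Collecting these factors through the tail and reusing Parseval on the coefficients of $\mathscr{L}^\nu u_h$ delivers the product bound \eqref{error_gPC_tensor}. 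The specialization to \eqref{error_gPC_tensor2} then follows by setting $N_1 = \cdots = N_r$ and invoking the stability bound \eqref{Stability1} to pass from $\|\mathscr{L}^\nu u_h\|_{L^2(\Omega)}$ to $C_p \|u_h\|_{H^{2|\nu|}(\Omega)}$, exactly as at the end of the proof of Lemma \ref{lemma2}.

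The main obstacle is that multi-indices outside $\sI_N$ need not exceed $N_i$ in every coordinate; only one component is required to do so, so a direct tail estimate does not obviously produce the full product $\prod_i N_i^{-4\nu_i}$. The cleanest remedy is to decompose $I - P_N = \sum_{i=1}^r (I - P_{N_i}^{(i)}) \prod_{j < i} P_{N_j}^{(j)}$ into a telescoping sum of one-dimensional tail projectors and apply Lemma \ref{lemma2} to each factor separately in its own random variable, with the constant $C_p$ absorbing the finite number of summands. This reduces the multi-dimensional estimate to successive applications of the one-dimensional result and produces the mixed-derivative bound \eqref{error_gPC_tensor}, while the final $H^{2|\nu|}(\Omega)$ norm in \eqref{error_gPC_tensor2} is supplied directly by \eqref{Stability1}.
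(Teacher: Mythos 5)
Your proposal correctly identifies the central difficulty that the paper glosses over (the paper states the corollary with no proof beyond the implicit ``tensorize Lemma~\ref{lemma2}''), namely that the complement of the tensor--product truncation set $\{\minda : 0\le\alpha_i\le N_i\}$ contains multi-indices that exceed $N_i$ in only \emph{one} coordinate, so the per-coordinate decay factors cannot simply be multiplied. However, your proposed remedy does not close this gap: the telescoping decomposition $I-P_N=\sum_{i=1}^r (I-P_{N_i}^{(i)})\prod_{j<i}P_{N_j}^{(j)}$, combined with the one-dimensional Lemma~\ref{lemma2} applied factor by factor, yields a \emph{sum} bound of the form $\sum_{i=1}^r N_i^{-4\nu_i}\|\mathscr{L}_i^{\nu_i}u_h\|^2_{L^2(\Omega)}$, not the \emph{product} bound $\bigl(\prod_{i=1}^r N_i^{-4\nu_i}\bigr)\|\mathscr{L}^{\nu}u_h\|^2_{L^2(\Omega)}$ of \eqref{error_gPC_tensor}. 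The discrepancy is not a matter of constants that $C_p$ can absorb: when every $N_i^{-4\nu_i}$ is small, the sum is vastly larger than the product, and in the equal-order case the sum decays like $N_1^{-4\min_i\nu_i}$ rather than the claimed $N_1^{-4|\nu|}$ with $|\nu|=\sum_i\nu_i$ in \eqref{error_gPC_tensor2}. Indeed the product form cannot be recovered by any correct argument: if $u_h$ depends only on $\xi_1$ and some $\nu_2\ge 1$, then $\mathscr{L}^{\nu}u_h=0$ so the right-hand side of \eqref{error_gPC_tensor} vanishes, while the left-hand side is the nonzero one-dimensional tail in $\xi_1$.

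A second, smaller issue: your key identity $(u_h,L_{\minda})=\prod_i\bigl(\tfrac{-1}{\alpha_i(\alpha_i+1)}\bigr)^{\nu_i}(\mathscr{L}^{\nu}u_h,L_{\minda})$ is only valid when $\alpha_i\ge 1$ for every coordinate with $\nu_i\ge 1$, since $\mathscr{L}_iL_0=0$; tail multi-indices with some $\alpha_i=0$ are exactly the ones for which the direct Parseval argument fails, which is the same obstruction seen from a different angle. The honest conclusion is that the corollary as stated is too strong, and the correct statement provable by your telescoping argument is the coordinate-wise sum bound; if you present the proof, you should state and prove that version rather than \eqref{error_gPC_tensor}--\eqref{error_gPC_tensor2}.
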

\begin{remark}
One may choose the best $N$-term Galerkin approximations in the gPC method, which reduces the total number of
basis functions and maintain an optimal convergence rate \cite{cohen2010convergence}. In this paper, we do not want to
complicate the presentation by pursuing this direction.
\end{remark}
\begin{remark}
The classical orthogonal polynomials arise from a differential equation of the form
\begin{align}
Q(x)u^{\prime\prime}(x)+L(x)u^{\prime}(x)=\lambda u,\label{Classical_Orthogonal_Polynomials}
\end{align}
where $Q(x)$ and $L(x)$ are polynomials and the function $u(x)$ and the constant $\lambda$ are obtained by solving this Sturm-Liouville type eigenvalue problem. The solutions of \eqref{Classical_Orthogonal_Polynomials} have singularities unless $\lambda$ takes on specific values. Let $\{\lambda_{k},u_{k}(x)\}$, $k=0,1,...$ denote the corresponding eigenvalues and eigenfunctions. Then, $\{u_{k}(x)\}$ form a set of orthogonal polynomials and the eigenvalues satisfy $\lambda_{k}=\frac{k(k-1)}{2}Q^{\prime\prime}(x)+kL^{\prime}(x)$ \cite{Askey:1985,CanutoQuarteroni:1982}. Therefore, the error estimates proved in the Lemma \ref{lemma2} and the Corollary \ref{corollary3} hold for other type of orthogonal polynomials.
\end{remark}


We then analyse the error of the gPC solutions obtained on the fine mesh and the coarse mesh.
\begin{lemma}\label{lemma3}
Let $u_h^{gPC}(x,\omega)$ and $u_{H}^{gPC}(x,\omega)$ denote the gPC solutions obtained on the fine mesh with FEM basis functions and the coarse mesh with our multiscale basis functions, respectively. We get the following error estimate,
\begin{align}
||u_{h}^{gPC}-u_{H}^{gPC}||_{(a,H_{0}^{1}(D)\otimes L^{2}(\Omega))}\leq C_2 H||f(x)||_{L^{2}(D)}
\end{align}
where $C_2$ depends on $a_{min}$ and $a_{max}$ and $H$ is the mesh size of coarse elements.
\end{lemma}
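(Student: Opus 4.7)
The plan is to use Galerkin orthogonality between the nested spaces $V_{DSM}\subset V_h$ and then reduce the stochastic error to a deterministic Poincaré-type inequality applied to the mean field of the error. Since the multiscale basis in \eqref{RepresentOCMaStocBasis} is expanded in the same fine-mesh FEM basis and the same gPC basis as $u_h^{gPC}$, both $u_h^{gPC}$ and $u_H^{gPC}$ are Galerkin approximations of the same RPDE on nested spaces, so Galerkin orthogonality gives $(e,v)_a=0$ for every $v\in V_{DSM}$, where $e=u_h^{gPC}-u_H^{gPC}$. Testing the fine-mesh equation $(u_h^{gPC},w)_a=(f,w)$ with $w=e$ and using $(u_H^{gPC},e)_a=0$, I first obtain $\|e\|_a^2=(f,e)$. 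To handle this right-hand side, I would use Lemma~\ref{LemmaOptimalApproximation} together with the biorthogonality $(\psi_{i,k},\phi_{j,l})=\delta_{i,j}\delta_{k,l}$ from \eqref{OC_MsStocBasis_Cons1} to show $e\in V_0$: the energy projection formula $u_H^{gPC}=\sum_{i,k}(u_h^{gPC},\phi_{i,k})\psi_{i,k}$ gives $(u_H^{gPC},\phi_{j,l})=(u_h^{gPC},\phi_{j,l})$, hence $(e,\phi_{j,l})=0$ for all admissible $j,l$.

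Next, selecting $l$ so that $\hmpn_l\equiv 1$, the orthogonality $(e,\phi_{j,l})=0$ forces the mean field $\E[e(x,\cdot)]$ to be $L^2(D)$-orthogonal to every measurement function $\varphi_j$. Since $f$ is deterministic, $(f,e)=\int_D f(x)\,\E[e(x,\cdot)]\,dx$, so Cauchy--Schwarz reduces the problem to estimating $\|\E[e]\|_{L^2(D)}$. I would then invoke a patch Poincaré-type inequality $\|w\|_{L^2(D)}\le CH\|\nabla w\|_{L^2(D)}$, valid for every $w\in H_0^1(D)$ that is $L^2$-orthogonal to $\mathrm{span}\{\varphi_j\}_{j=1}^{N_x}$, applied to $w=\E[e]$. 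Combining with $\|\nabla\E[e]\|_{L^2(D)}\le\|\nabla e\|_{L^2(D\times\Omega)}$ (Jensen in $\omega$) and the uniform ellipticity estimate $\|\nabla e\|_{L^2(D\times\Omega)}\le a_{\min}^{-1/2}\|e\|_a$, one gets
\[
\|e\|_a^2=(f,e)\le \|f\|_{L^2(D)}\,\|\E[e]\|_{L^2(D)}\le \frac{CH}{\sqrt{a_{\min}}}\,\|f\|_{L^2(D)}\,\|e\|_a,
\]
from which the stated estimate follows with $C_2$ depending only on $a_{\min}$ and the mesh regularity.

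The main technical obstacle is the patch Poincaré inequality for functions $L^2$-orthogonal to the nodal measurement basis $\{\varphi_j\}$. If $P_H$ denotes the $L^2$ projector onto $\mathrm{span}\{\varphi_j\}_{j=1}^{N_x}$, then $(w,\varphi_j)=0$ for all $j$ forces $P_H w=0$, so $\|w\|_{L^2(D)}=\|w-P_H w\|_{L^2(D)}$, and a Scott--Zhang interpolation or $L^2$-best-approximation argument on the quasi-uniform coarse mesh supplies the desired $CH\|\nabla w\|_{L^2(D)}$ bound. Verifying this carefully for the specific nodal basis used here is the crux; once it is in place, the argument is essentially a tensor-product lifting of the deterministic multiscale FEM estimate already invoked in Section~\ref{sec:OCelliptic}, and the hidden constant can be tracked through the proof to confirm its dependence on $a_{\min}$ (and implicitly on $a_{\max}$ through the domain-level Poincaré/ellipticity constants).
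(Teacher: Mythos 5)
Your argument is correct and rests on the same two pillars as the paper's proof: Galerkin orthogonality between the nested spaces reduces everything to $\|e\|_a^2=(f,e)$ with $e=u_h^{gPC}-u_H^{gPC}$, and a coarse-scale interpolation/Poincar\'e inequality then supplies the factor $H$. Where you genuinely differ is in how the stochastic variable and the orthogonality needed for the Poincar\'e step are handled. The paper argues realization by realization: it posits a coarse interpolation operator $\mathscr{J}_H$ with $\mathscr{J}_H e(\cdot,\omega)=0$, bounds $\|e(\cdot,\omega)\|_a\lesssim H\|f\|_{L^2(D)}$ pathwise, and integrates over $\Omega$ at the end. You stay in the tensor space throughout: you derive $(e,\phi_{j,l})=0$ from the biorthogonality constraints \eqref{OC_MsStocBasis_Cons1} together with the identification of the Galerkin solution with the biorthogonal projection (which follows from \eqref{orthogonal} and the nestedness $V_{DSM}\subset V_h\otimes S_N$ guaranteed by the representation \eqref{RepresentOCMaStocBasis} --- this identification is the one step you should state explicitly rather than cite as a ``formula''), then use the determinism of $f$ to reduce $(f,e)$ to the mean field $\E[e(x,\cdot)]$ and apply the Poincar\'e inequality only to $\E[e]$. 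Your route buys two things. First, it repairs a real looseness in the paper: the constraints and the Galerkin condition hold only in expectation, so the paper's pathwise orthogonality $a\bigl(u_H^{gPC}(\cdot,\omega),e(\cdot,\omega)\bigr)=0$ and the pathwise property $\mathscr{J}_H e(\cdot,\omega)=0$ are not literally available, whereas your mean-field reduction uses exactly the tensor-space conditions that do hold. Second, it isolates the single technical ingredient --- the Poincar\'e inequality for $w\in H_0^1(D)$ with $w\perp_{L^2}\mathrm{span}\{\varphi_j\}_{j=1}^{N_x}$ --- and correctly reduces it to a Scott--Zhang/best-approximation estimate on the quasi-uniform coarse mesh; the paper simply asserts the analogous step. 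The price is a slightly longer proof and the (standard but nontrivial) verification of that inequality for the interior nodal basis; the constant you obtain depends on $a_{\min}$ and the mesh regularity exactly as the paper claims.
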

\begin{proof}
We only need to analyse the error in the physical space as the random space is discretized using the same method.
Let $V_h$ denote the space spanned by the FE nodal basis functions on the fine mesh.
For each realization $\omega$, we know that $u^{gPC}_{h}(x,\omega)\in V_{h}\cap H_{0}^{1}(D)$ and
$u^{gPC}_{H}(x,\omega)\in V_{h}\cap H_{0}^{1}(D) $. Let $e(x,\omega)=u^{gPC}_{h}(x,\omega)-u^{gPC}_{H}(x,\omega)$ denote the error. We have $ e(x,\omega)\in V_{h}\cap H_{0}^{1}(D) $.
Using the Aubin-Nistche duality argument to the weak formulation of Eq.\eqref{MsStoEllip_OCbasis_Eq}, we get
\begin{align}
a\Big(u_{h}^{gPC}(x,\omega),e(x,\omega)\Big)=\Big(f(x),e(x,\omega)\Big), \label{Aubin_Nistche1}
\end{align}
Using the Galerkin orthogonality property $a\Big(u_{H}^{gPC}(x,\omega),e(x,\omega)\Big)=0$, we get
\begin{align}
a\Big(e(x,\omega),e(x,\omega)\Big)=\Big(f(x),e(x,\omega)\Big). \label{Aubin_Nistche2}
\end{align}
We introduce a coarse mesh interpolation operator $ \mathscr{J}_{H} $, which satisfies
$\mathscr{J}_{H}e(x,\omega)=0$. From Eq.\eqref{Aubin_Nistche2}, we get the estimate as follows,
\begin{align}
||e(x,\omega)||_{a}^2& \leq ||f(x)||_{L^2(D)}||e(x,\omega))||_{L^2(D)}=||f(x)||_{L^2(D)}||e(x,\omega))- \mathscr{J}_{H}e(x,\omega)||_{L^2(D)} \nonumber \\
&\lesssim H||f(x)||_{L^2(D)}||e(x,\omega)||_{H^1(D)}, \label{Aubin_Nistche3}
\end{align}
where we have used the holder inequality, the interpolation inequality, and the definition of the energy norm $||\cdot||_{a}$.
Note that the relationship between $||\cdot||_{a}$ and $H^1$ norm implies that
\begin{align}
||e(x,\omega)||_{a} \lesssim C_2 H||f(x)||_{L^2(D)},
\label{Aubin_Nistche4}
\end{align}
where $C_2$ depends on $a_{min}$ and $a_{max}$. Since the estimate \eqref{Aubin_Nistche4} holds for each realization $\omega$, we integrate over the stochastic space and prove the lemma.
\end{proof}
Finally, we analyse the error between the solution $u_{H}^{gPC}$ and the solution $u^{DSM}_{H}$ obtained using our method. Note that both $u_{H}^{gPC}$ and $u^{DSM}_{H}$ are represented in the same physical space. The $u_{H}^{gPC}$ is obtained using the gPC method with $N_{gPC}$ basis functions (see e.g., Eq.\eqref{gPC_index}), while the number of the stochastic basis functions for $u^{DSM}_{H}$ is $N_{\xi}$, which is further reduced.
\begin{lemma}\label{lemma4}
Let $u_H^{gPC}(x,\omega)$ and $u^{DSM}(x,\omega)$ denote the solutions that have been defined above. Then we get the following error estimate
\begin{align}
||u_{H}^{gPC}(x,\omega)-u^{DSM}(x,\omega)||_{(a,H_{0}^{1}(D)\otimes L^{2}(\Omega))}\leq 
C_3 \sum\limits_{i=N_{\xi}+1}^{N_{gPC}}\mu_{i}, \label{POD_RandomSpace}
\end{align}
where $C_3$ is a constant depending on the diameter of domain $D$, $a_{min}$ and $a_{max}$, and ${\mu_{i}}$ are eigenvalues of the covariance matrix of the solution $u_H^{gPC}(x,\omega)$.
\end{lemma}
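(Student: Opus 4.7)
The natural strategy is to combine the optimal Galerkin approximation property established in Theorem~\ref{TheoremOptimalApproximation} with the classical Karhunen-Lo\`eve tail estimate applied to $u_{H}^{gPC}$.

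First I would identify $u^{DSM}$ as the $a$-orthogonal Galerkin projection of $u_{H}^{gPC}$ onto the reduced tensor subspace $V^{DSM}\subset V^{gPC}$, where $V^{DSM}$ is spanned by the multiscale physical basis tensored with the $N_{\xi}$ retained random basis functions (the leading principal components of $u_{H}^{gPC}$). By Theorem~\ref{TheoremOptimalApproximation},
\begin{align}
||u_{H}^{gPC}-u^{DSM}||_{a}^{2}\;\leq\;\inf_{v\in V^{DSM}}||u_{H}^{gPC}-v||_{a}^{2}.\notag
\end{align}

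Second, let $\{\phi_{i}\}_{i=1}^{N_{gPC}}$ be the eigenfunctions of the covariance operator of $u_{H}^{gPC}$, orthonormal in $L^{2}(D)$, with eigenvalues $\mu_{i}$, and let $P_{N_{\xi}}u_{H}^{gPC}$ denote the truncation retaining only the first $N_{\xi}$ KL modes. Parseval's identity applied to the KL expansion gives the exact $L^{2}$ tail identity
\begin{align}
||u_{H}^{gPC}-P_{N_{\xi}}u_{H}^{gPC}||_{L^{2}(D)\otimes L^{2}(\Omega)}^{2}\;=\;\sum_{i=N_{\xi}+1}^{N_{gPC}}\mu_{i}.\notag
\end{align}
Taking $v=P_{N_{\xi}}u_{H}^{gPC}\in V^{DSM}$ in the best-approximation bound then isolates a residual whose $L^{2}(D\otimes\Omega)$-norm square is controlled by $\sum_{i>N_{\xi}}\mu_{i}$.

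The third and main step is to convert this $L^{2}$-tail bound into an energy-norm bound. I would use uniform ellipticity \eqref{asUniformlyElliptic} together with an Aubin-Nitsche style duality argument mirroring Lemma~\ref{lemma3}: write the truncation residual $r = u_{H}^{gPC}-P_{N_{\xi}}u_{H}^{gPC}$ as the weak solution of an auxiliary elliptic problem, apply Cauchy-Schwarz in the weak form, and use the Poincar\'e inequality to pick up a factor of $\mathrm{diam}(D)$. The coercivity constants $a_{\min},a_{\max}$ are then absorbed into $C_{3}$.

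The main obstacle is precisely this last step. Since the KL truncation is optimal in the $L^{2}(D\otimes\Omega)$ inner product and not in $(\cdot,\cdot)_{a}$, a direct C\'ea estimate would lose control of constants. I would therefore either (i) replace $P_{N_{\xi}}$ by the $a$-orthogonal projection onto the same KL subspace and use the norm equivalence $a_{\min}||\cdot||_{H_{0}^{1}}^{2}\leq||\cdot||_{a}^{2}\leq a_{\max}||\cdot||_{H_{0}^{1}}^{2}$ to compare the two projections, or (ii) exploit that the KL eigenfunctions $\phi_{i}$ inherit elliptic regularity from the solution operator, which allows $||\nabla\phi_{i}||_{L^{2}}$ to be controlled and hence the energy norm of the tail to be bounded by the same eigenvalue sum. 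Either route yields the claimed estimate with $C_{3}$ depending on $\mathrm{diam}(D)$, $a_{\min}$, and $a_{\max}$.
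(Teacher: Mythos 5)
Your first step matches the paper: Galerkin orthogonality of $u^{DSM}$ over $V_{DSM}$ yields the best-approximation bound in the energy norm, and the equivalence of $\|\cdot\|_a$ with the unweighted tensor norm (constants $a_{\min},a_{\max}$) is then invoked. Where your argument diverges --- and where the gap lies --- is in how the eigenvalue tail enters. The $\mu_i$ in the lemma are not eigenvalues of the continuous covariance operator of $u_H^{gPC}$ on $L^2(D)$; they are eigenvalues of the finite matrix $YY^{T}$, where $Y$ is the $N_{gPC}\times N_x$ array of gPC coefficients of $u_H^{gPC}$ at the coarse nodes. The paper's key structural observation is that $u_H^{gPC}$ and $u^{DSM}$ are built from the \emph{same} physical basis functions, so the energy-norm difference reduces (with the $H^1$ norms of those fixed basis functions absorbed into $C_3$) to a purely finite-dimensional problem: approximate the columns of $Y$ by an $N_\xi$-dimensional subspace of $\mathbb{R}^{N_{gPC}}$. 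The truncated-SVD (Eckart--Young type) bound then produces $\sum_{i>N_\xi}\mu_i$ directly, and one never needs to convert an $L^2(D\times\Omega)$ bound into an $H^1$ bound.

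Your third step, which you correctly flag as the main obstacle, does not go through as proposed. Poincar\'e and Aubin--Nitsche run in the wrong direction: they control $L^2$ by $H^1$, not the reverse, so the exact $L^2(D\times\Omega)$ tail identity for the KL truncation cannot be upgraded to an energy-norm bound by duality. Your option (i), the norm equivalence $a_{\min}\|\cdot\|_{H_0^1}^2\le\|\cdot\|_a^2\le a_{\max}\|\cdot\|_{H_0^1}^2$, only relates two gradient-based norms and never bridges the $L^2$-to-$H^1$ gap. Your option (ii) would force the constant to carry $\sup_i\|\nabla\phi_i\|_{L^2(D)}$ over the KL eigenfunctions of $u_H^{gPC}$, which is not controlled by $\mathrm{diam}(D)$, $a_{\min}$ and $a_{\max}$ alone. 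There is also a smaller unjustified step: you take $v=P_{N_\xi}u_H^{gPC}$ as the competitor in the best-approximation bound, but the leading KL subspace of $u_H^{gPC}$ need not lie in $V_{DSM}$; the paper instead compares the $N_\xi$-dimensional random span $V_A$ of the DSM solution (after Gram--Schmidt) against the optimal $N_\xi$-dimensional singular subspace $V_{N_\xi}$ of $Y$. The remedy is to follow the paper's route: fix the common physical basis, reduce the infimum to the coefficient matrix $Y$, and apply the SVD truncation bound there.
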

\begin{proof}
Let $V_{DSM}=span\{\psi_{i,k}(x,\xi(\omega))\}$, $i=1,\cdots, N_x$ and $k=1,\cdots,N_{\xi}$. We know that $u^{DSM} \in V_{DSM}\subseteq H_{0}^{1}(D)\otimes L^{2}(\Omega)$. Recall that the bilinear form $a(u,v)$ are defined as
\begin{align}
a(u,v)=\int_{\Omega}\big(\int_{D}\nabla_{x} u(x,\omega)\cdot a(x,\omega)\nabla_{x} v(x,\omega) dx\big)dP(\omega)
\label{Weakform-ErrorAnalysis}
\end{align}
and the right-hand-side term is defined as
\begin{align}
(f,v)=\int_{\Omega}\int_{D} f(x) v(x,\omega) dxdP(\omega).
\label{RHS-ErrorAnalysis}
\end{align}
Note that $u_{H}^{gPC}(x,\omega)$ and $u^{DSM}(x,\omega)$ are represented using the same physical basis functions and
the gradient in the bilinear form of $a(u,v)$ is only respect to the physical space. We have
\begin{align}
a(u_{H}^{gPC}(x,\omega),v(x,\omega))& =(f(x),v(x,\omega)), \quad \forall  v(x,\omega) \in V_{DSM},  \\
a(u^{DSM}(x,\omega),v(x,\omega)& =(f(x),v(x,\omega)), \quad      \forall  v(x,\omega) \in V_{DSM}.
\end{align}
Then, we get the following orthogonality condition 
\begin{align}
a(u_{H}^{gPC}(x,\omega)-u^{DSM}(x,\omega),v(x,\omega)) =0, \quad \forall  v(x,\omega) \in V_{DSM}.
\end{align}
Applying the Aubin-Nistche duality argument again and using the fact $u^{DSM}(x,\omega)\in V_{DMS}$, we get
the following error estimate,
\begin{align}
 ||u_{H}^{gPC}-u^{DSM}||_{(a,H_{0}^{1}(D)\otimes L^{2}(\Omega))}
 &\leq \inf \limits_{\forall v\in V_{DSM}}||u_{H}^{gPC}-v||_{(a,H_{0}^{1}(D)\otimes L^{2}(\Omega))}, \nonumber \\
 &\leq C_3\inf \limits_{\forall v\in V_{DSM}}||u_{H}^{gPC}-v||_{H_{0}^{1}(D)\otimes L^{2}(\Omega)},
\label{inf_error}
\end{align}
where $C_3$ depends on $a_{min}$ and $a_{max}$. Since $u_{H}^{gPC}(x,\omega)$ and $u^{DSM}(x,\omega)$ are represented using the same physical basis functions, we only analyse the error generated from the random space. For each node $ x_{i}$ on the coarse mesh, we have
\begin{align}
u_{H}^{gPC}(x_{i},\omega)=\sum\limits_{l=1}^{N_{gPC}}u_{H,l}^{gPC}(x_{i})\hmpn_{l}(\xi(\omega),
\quad\text{and}\quad
u^{DSM}(x_{i},\omega)=\sum\limits_{k=1}^{N_{\xi}}u_{k}^{DSM}(x_{i})A_k(\xi(\omega)),
\end{align}
where each $A_{k}(\xi(\omega)) $ is a linear combination of gPC basis $\hmpn_{l}(\xi(\omega))$, i.e.,
$A_{k}(\xi(\omega))=\sum\limits_{l=1}^{N_{gPC}}A_{kl}\hmpn_{l}(\xi(\omega))$.
Applying the Gram-Schmidt algorithm to $\{A_{k}(\xi(\omega))\}_{k=1}^{N_{\xi}}$, we can construct a set of orthogonal basis functions, still denoted by  $\{A_{k}(\xi(\omega))\}_{k=1}^{N_{\xi}}$.  Let $V_{A}=span\{A_k(\xi(\omega))\}_{k=1}^{N_{\xi}}$. We have
\begin{align}
\inf \limits_{\forall v\in V_{A}}||u_{H}^{gPC}(x_i,\omega)-v(x_i,\omega)||_{L^{2}(\Omega)}
=||u_{H}^{gPC}(x_i,\omega)-\Pi_{V_{A}}(u_{H}^{gPC}(x_i,\omega))||_{L^{2}(\Omega)},
\label{inf_A2}
\end{align}
where $\Pi_{V_{A}}$ is the projection operator onto the subspace $V_{A}$. Since the number of coarse nodes is $N_x$, we
collect all the gPC coefficients together and define
\begin{equation}\label{gPC_coefficient_Y}
Y=\left(
\begin{matrix}
u_{H,1}^{gPC}(x_{1})&\cdots&u_{H,1}^{gPC}(x_{N_{x}})&\\
\vdots&\ddots&\vdots\\
u_{H,N_{gPC}}^{gPC}(x_{1})&\cdots&u_{H,N_{gPC}}^{gPC}(x_{N_{x}})&
\end{matrix}
\right).
\end{equation}
Then, we analyse the error between $u_{H}^{gPC}(x,\omega)$ and $u^{DSM}(x,\omega)$ through the singular value decomposition (SVD) of $Y$. Specifically,  we compute the eigenvalues and eigenvectors of $ YY^{T}v_{i}=\mu_{i}v_{i}$, $i=1,\cdots,N_{gPC}$, where $v_{i}$ are eigenvectors and $ \mu_{1}\geq \cdots \mu_{N_{gPC}}\geq 0 $ are corresponding eigenvalues. If we choose the first $N_{\xi}$ eigenvectors and take $V_{N_{\xi}}=span\{v_{i}\}_{i=1}^{N_{\xi}}$, we have
\begin{align}
\min\limits_{V_{N_{\xi}}}||Y-\Pi_{V_{N_{\xi}}}(Y)|| \leq \sum\limits_{i=N_{\xi}+1}^{N_{gPC}}\mu_{i},
\label{min_pod1}
\end{align}
where $\Pi_{V_{N_{\xi}}}$ is the projection operator onto the subspace $V_{N_{\xi}}$. Recall that the optimal approximation property of our basis functions that is proved in Section \ref{sec:VariationalProperties} and both $V_{A}$ and $V_{N_{\xi}}$ are $N_{\xi}$-dimensional space. We know that for each coarse grid node
$x_i$, $i=1,...N_x$, we have
 \begin{align}
\min\limits_{V_{A}}||u_{H}^{gPC}(x_i,\omega)-\Pi_{V_{A}}(u_{H}^{gPC}(x_i,\omega))||_{L^{2}(\Omega)}
\leq \min\limits_{V_{N_{\xi}}}||Y-\Pi_{V_{N_{\xi}}}(Y)|| \leq \sum\limits_{i=N_{\xi}+1}^{N_{gPC}}\mu_{i}. \label{min_pod2}
 \end{align}
Therefore by the definition of the norm $ ||\cdot||_{(a,H_{0}^{1}(D)\otimes L^{2}(\Omega))} $ and
the fact that the error is independent of the physical space approximation,
we complete our proof by using  \eqref{inf_A2} \eqref{min_pod1} and \eqref{min_pod2}
 \begin{align}
 ||u_{H}^{gPC}-u^{DSM}||_{(a,H_{0}^{1}(D)\otimes L^{2}(\Omega))}\leq
 C_3\inf \limits_{\forall v\in V_{DSM}}||u_{H}^{gPC}-v||_{H_{0}^{1}(D)\otimes  L^{2}(\Omega)}  \leq   C_3\sum\limits_{i=N_{\xi}+1}^{N_{gPC}}\mu_{i}, \label{gPC_DSM_RandomError}
 \end{align}
where $C_3$ depends on $a_{min}$, $a_{max}$, and the domain size.
\end{proof}
\begin{remark}
How to determine the random dimension of the solution (i.e., $N_{\xi}$) \emph{a priori} is very challenging since it depends on the regularity of the coefficient $a^{\varepsilon}(x,\omega)$ and the force $f(x)$. We propose an \emph{a posteriori} approach to estimate $N_{\xi}$ in this work. See \ref{sec:StochasticDimension} for more  details. The same issue was also discussed in \cite{hou2015heterogeneous}.
\end{remark}
\begin{theorem}
If $u_{h}(x,\omega) $ is the reference solution to Eqns. \eqref{MsStoEllip_OCbasis_Eq} and \eqref{MsStoEllip_OCbasis_BC} and $u^{DSM}(x,\omega)$ is the solution obtained by our method, we have the error estimate as follows,
\begin{align}
||u_{h}-u^{DSM}||_{(a,H_{0}^{1}(D)\otimes L^{2}(\Omega))}\leq 
\prod_{i=1}^{r}N_{i}^{-2\nu_i}||\mathscr{L}^{\nu}u_h||_{L^2(\Omega)}
+C_2H||f(x)||_{L^{2}(D)\otimes L^2(\Omega)}+ C_3\sum\limits_{i=N_{\xi}+1}^{N_{gPC}}\mu_{i}
\label{theorem1}
\end{align}
with $C_2$ and $C_3$ depend on $a_{min}$ and $a_{max}$ and ${\mu_{i}} $ are eigenvalues of the covariance kernel of the solution $u_{h}(x,\omega)$.
\end{theorem}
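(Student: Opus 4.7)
The plan is to prove the theorem by a straightforward triangle-inequality decomposition already anticipated in \eqref{TriangleInequalityMainError}, combined with the three lemmas established earlier in the section. First I would write
\begin{align*}
\|u_h - u^{DSM}\|_{(a,H_0^1(D)\otimes L^2(\Omega))}
&\leq \|u_h - u_h^{gPC}\|_{(a,H_0^1(D)\otimes L^2(\Omega))}
  + \|u_h^{gPC} - u_H^{gPC}\|_{(a,H_0^1(D)\otimes L^2(\Omega))} \\
&\quad + \|u_H^{gPC} - u^{DSM}\|_{(a,H_0^1(D)\otimes L^2(\Omega))},
\end{align*}
so that each of the three summands is matched to exactly one lemma.

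For the first term I would invoke Corollary~\ref{corollary3} applied pointwise in $x$ to $u_h(x,\cdot)$ with the tensorized multi-index truncation of gPC, and then integrate the resulting pointwise bound in the weighted energy norm over $D$. The Sturm--Liouville bound together with the stability estimate \eqref{Stability2} transforms $\|\mathscr{L}^\nu u_h\|_{L^2(\Omega)}$ into a term of the form $\prod_{i=1}^r N_i^{-2\nu_i}\|\mathscr{L}^{\nu} u_h\|_{L^2(\Omega)}$, which gives the first piece on the right-hand side of \eqref{theorem1}. The second term is controlled directly by Lemma~\ref{lemma3}, yielding the $C_2 H \|f\|_{L^2(D)\otimes L^2(\Omega)}$ contribution. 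The third term is controlled by Lemma~\ref{lemma4}, which yields the eigenvalue tail sum $C_3 \sum_{i=N_\xi+1}^{N_{gPC}} \mu_i$.

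Summing the three pieces gives exactly \eqref{theorem1}. The step most likely to need care is ensuring consistency of the norms across the three lemmas: Lemma~\ref{lemma2} is stated in the $L^2(\Omega)$-norm pointwise in $x$, while the theorem is in the tensor energy norm, so I would verify that the stability assumption \eqref{Stability2} can be integrated against $a(x,\xi(\omega))$ uniformly, so that the pointwise estimate upgrades to the energy norm with constants absorbed into $a_{\min},a_{\max}$. A minor technical point is that $u_h^{gPC}$ and $u_H^{gPC}$ in Lemma~\ref{lemma3} must share the same truncation of the random space as $u^{DSM}$ in Lemma~\ref{lemma4}; since both lemmas decouple the physical and random discretizations (one freezes the random space, the other freezes the physical space), their estimates combine additively without cross terms, so the triangle inequality suffices without any sharper interaction estimate.
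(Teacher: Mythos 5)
Your proposal is correct and follows essentially the same route as the paper: the paper's own proof is just the triangle inequality decomposition \eqref{TriangleInequalityMainError} followed by a direct invocation of Lemma~\ref{lemma2} (via Corollary~\ref{corollary3}), Lemma~\ref{lemma3}, and Lemma~\ref{lemma4}. Your added remarks on upgrading the pointwise-in-$x$ random-space estimate to the tensor energy norm and on the consistency of truncations across the lemmas are details the paper leaves implicit, but they do not change the argument.
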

\begin{proof}
For simplicity, we use $ ||\cdot|| $ to denote the norm $ ||\cdot||_{a,H_{0}^{1}(D)\otimes L^{2}(\Omega))} $, then by the triangle inequality we can get
\begin{align}
||u_{h}-u^{DSM}||\leq  ||u_{h}-u_{h}^{gPC}||+||u_{h}^{gPC}-u_{H}^{gPC}||+||u_{H}^{gPC}-u^{DSM}||.
\end{align}
By the three lemmas that have been proved before, we can get the error estimate directly.
\end{proof}

\section{Numerical examples}\label{sec:NumericalExamples}
\noindent
In this section, we shall perform numerical experiments to test the performance and accuracy of the proposed method. We also verify our error analysis through numerical experiments.
\subsection{An example with an oscillatory coefficient}\label{sec:NumericalExample1}
\noindent
We consider the following multiscale elliptic PDE with random coefficient on $D=[0,1]\times[0,1]$:
\begin{align}
-\nabla\cdot(a^{\varepsilon}(x,y,\omega)\nabla u^{\varepsilon}(x,y,\omega))  & = f(x,y), \quad (x,y)\in D,  \omega\in\Omega, 
\label{MsDSM_NumEX1_Eq} \\
u^{\varepsilon}(x,y,\omega)&= 0, \quad \quad \quad \quad (x,y)\in \partial D.\label{MsDSM_NumEX1_BC}
\end{align}
The multiscale and random information is described by the coefficient
\begin{small}
\begin{align}
a^{\varepsilon}(x,y,\omega)=0.1
+\frac{2+p_1\sin(\frac{2\pi(x-y)}{\varepsilon_1})}{2-p_1\cos(\frac{2\pi(x-y)}{\varepsilon_1})}\xi_1(\omega)
+\frac{2+p_2\cos(\frac{2\pi y}{\varepsilon_2})}{2-p_2\sin(\frac{2\pi x}{\varepsilon_2})}\xi_2(\omega)
+\frac{2+p_3\sin(\frac{2\pi(x-0.5)}{\varepsilon_3})}{2-p_3\cos(\frac{2\pi(y-0.5)}{\varepsilon_3})}\xi_3(\omega),  \label{MsDSM_NumEX1_Coef}
\end{align}
\end{small}
where $\varepsilon_1=1/11$, $\varepsilon_2=1/14$, $\varepsilon_3=1/17$, $p_1=1.6$, $p_2=1.4$, $p_3=1.5$, and $\{\xi_{i}(\omega)\}_{i=1}^{3}$ are independent uniform random variables in $[0,1]$. We choose the parameters of
the coefficient \eqref{MsDSM_NumEX1_Coef} in such a way that it will generate oscillatory features in the solution.

\emph{Multiquery results in the online stage.}  We shall show that our multiscale data-driven stochastic basis functions  can be used to solve a multi-query problem. In our computations, we use the standard FEM to discretize the spatial dimension.  We choose a $256\times256$ fine mesh to well resolve the spatial dimension of the stochastic solution $u^{\varepsilon}(x,y,\omega)$. Since the solution $u^{\varepsilon}(x,y,\omega)$ is smooth in stochastic space, we use a sparse-grid based stochastic collocation method to discretize the stochastic space. First, we conduct a convergence study, and find that the relative errors of the mean and the standard deviation (STD) between the solutions obtained by
a seven-level sparse grid in the SFEM and higher-level sparse grids are smaller than $0.1\%$ both in the $L^2$ and the $H^1$ norms. Therefore, we choose seven-level sparse grid with 207 points in the SFEM to compute the reference solution.

To implement our method, we take the physical coarse mesh grid to be 8$ \times $8 and choose the polynomial chaos basis functions of total order 4 to approximate the  stochastic space. We remark that in our DSM method, the forcing function $f(x,y)$ should be well-resolved by the coarse mesh, otherwise the numerical error will be large. We choose $\mathfrak{F}=\{ \sin( k_{i}\pi x + \phi_{i})\cos(l_{i} \pi y + \varphi_{i})\}_{i=1}^{20}$, where $k_{i}$ and $l_{i}$ are uniformly distributed over the interval $[0,4]$, while $\phi_{i}$ and $\varphi_{i}$ are uniformly distributed over the interval $[0,1]$, as the function class of right-hand side in the preconditioning of the our method. In Fig.\ref{fig:MultiQuery2DMeanError_EX1} and Fig.\ref{fig:MultiQuery2DSTDError_EX1}, we show the relative errors of the mean and the STD of the our method in the $L^2$ norm and the $H^1$ norm, respectively. One can see that our method is efficient in solving the multi-query problem.
\begin{figure}[tbph]
  \centering
  \begin{subfigure}{0.39\textwidth}
    \centering
    \includegraphics[width=\textwidth]{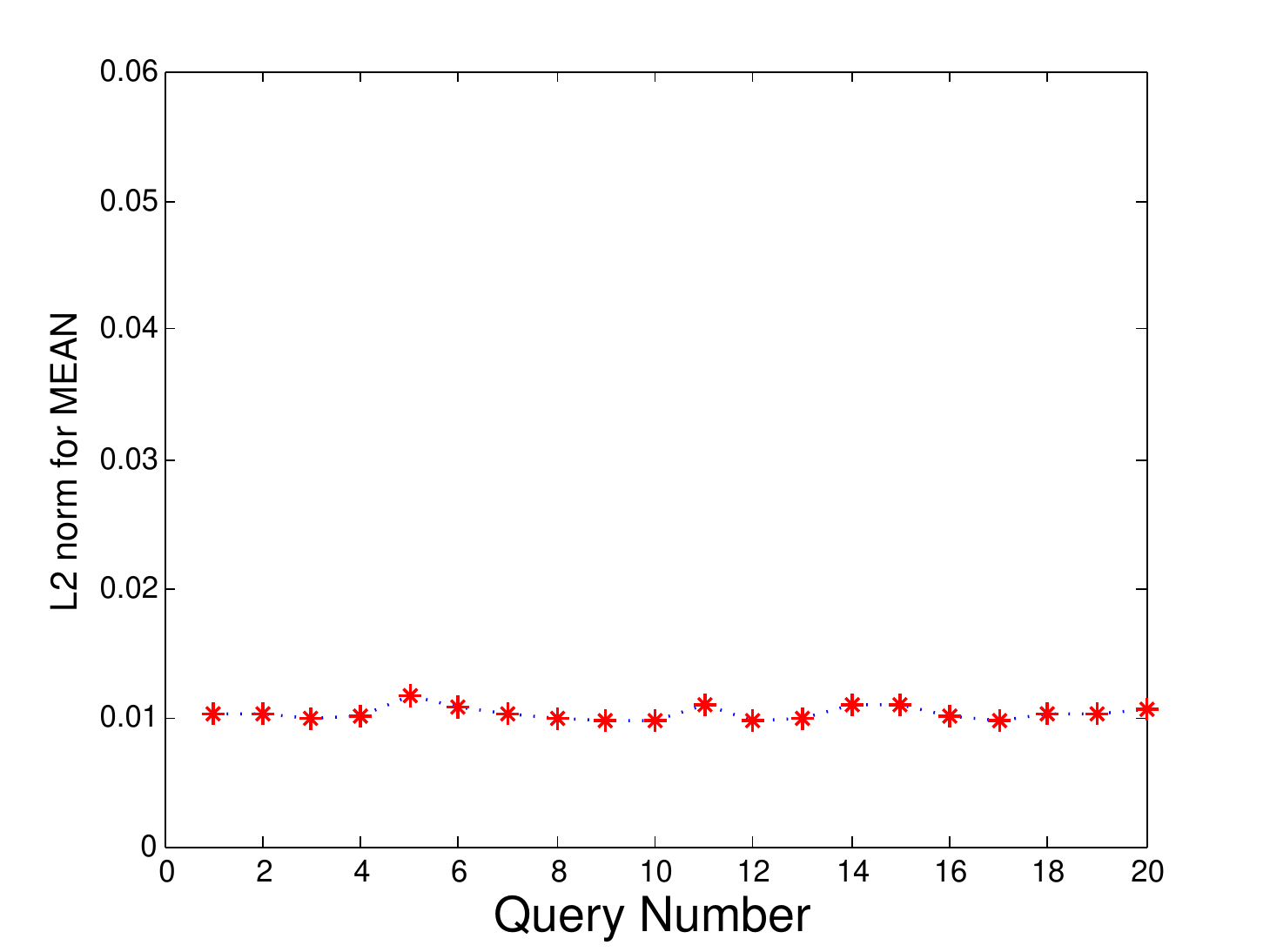}
    \label{fig:ex3_mean}
  \end{subfigure}
  \begin{subfigure}{0.39\textwidth}
    \centering
    \includegraphics[width=\textwidth]{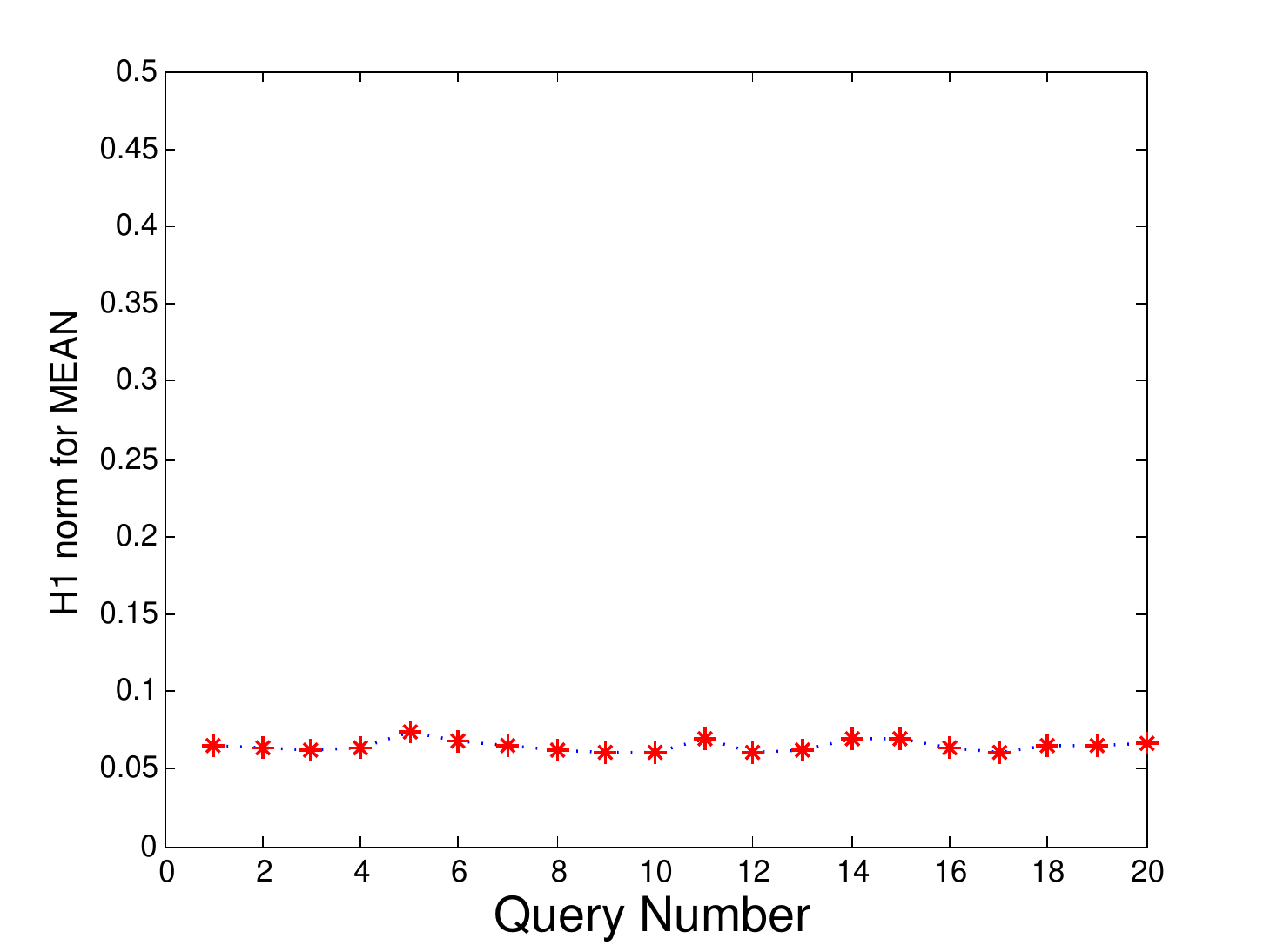}
    \label{fig:ex3_std}
  \end{subfigure}
  \caption{The mean error of our method in the $L^2$ norm and the $H^1$ norm.}
  \label{fig:MultiQuery2DMeanError_EX1}
\end{figure}
\begin{figure}[tbph]
  \centering
  \begin{subfigure}{0.39\textwidth}
    \centering
    \includegraphics[width=\textwidth]{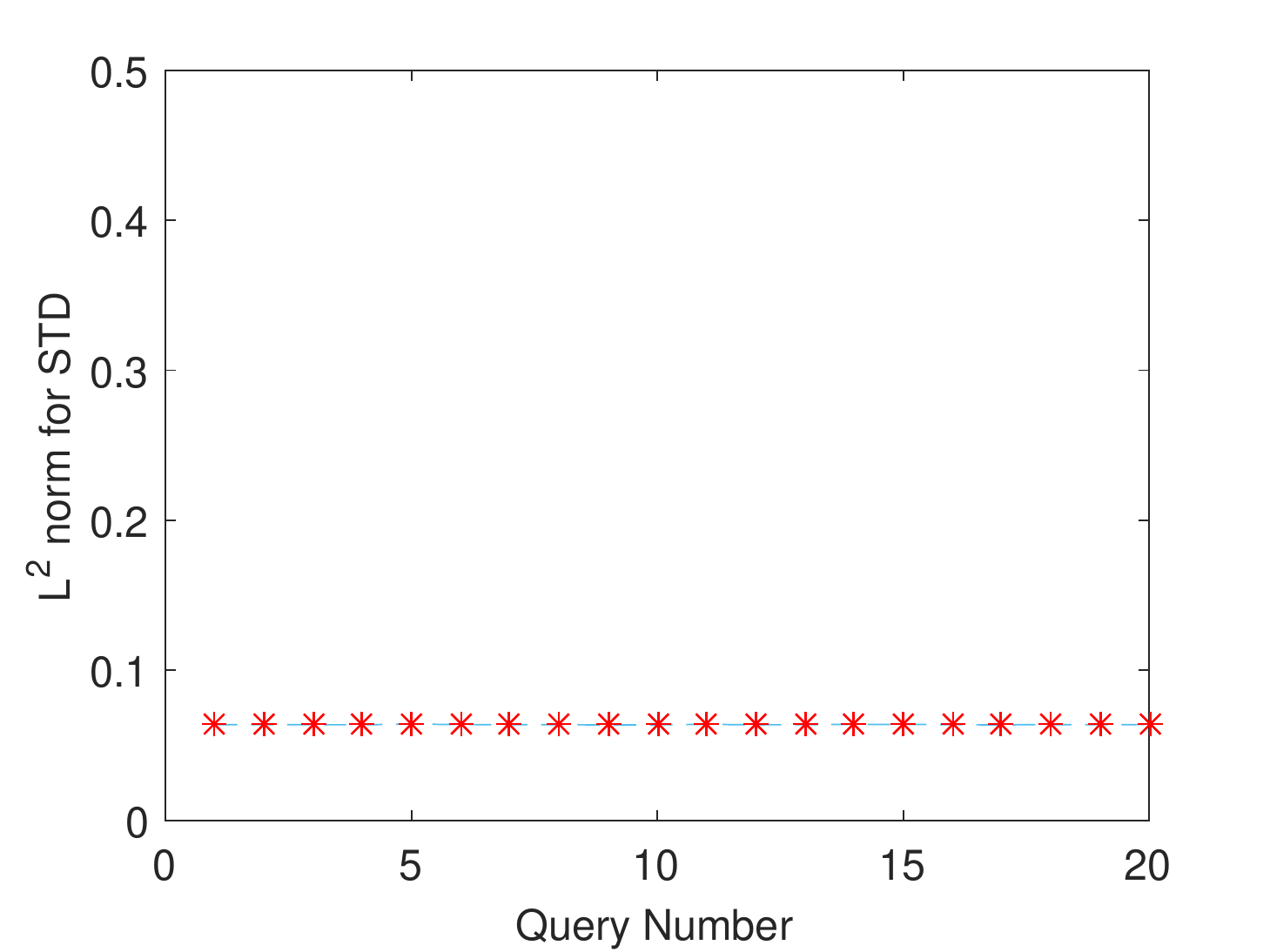}
    \label{fig:ex3_mean}
  \end{subfigure}
  \begin{subfigure}{0.39\textwidth}
    \centering
    \includegraphics[width=\textwidth]{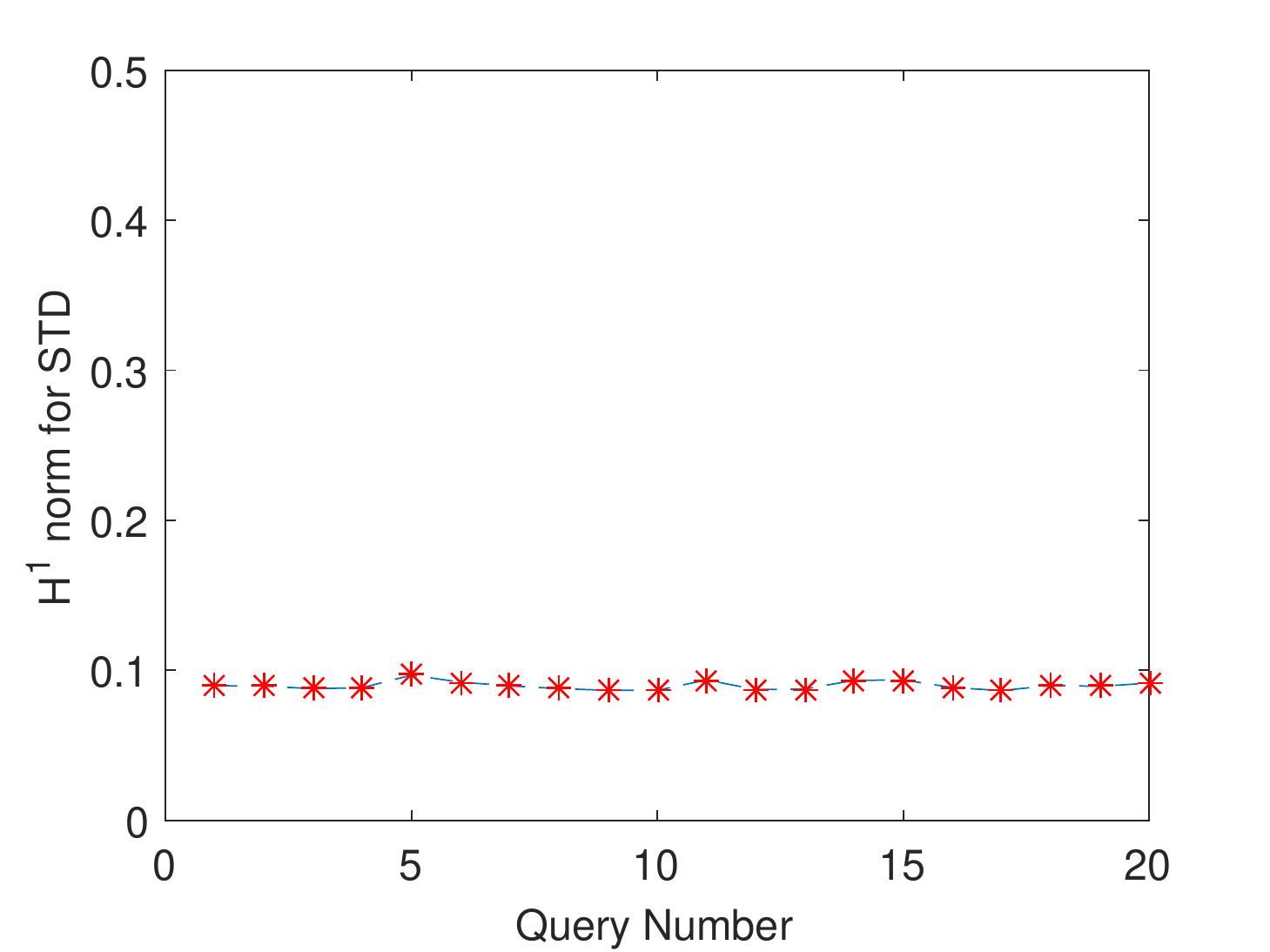}
    \label{fig:ex3_std}
  \end{subfigure}
  \caption{The STD error of our method in the $L^2$ norm and the $H^1$ norm.}
  \label{fig:MultiQuery2DSTDError_EX1}
\end{figure}

In Fig.\ref{fig:ProfilesOfOneSolution_EX1}, we show the mean and STD of the solution corresponding to $f(x,y)=\sin(2.3\pi x + 0.2)\cos(1.5\pi y - 0.3)$. We use the notation SC to denote the reference solution, which is obtained using the stochastic finite element method on the fine mesh, while the notation DSM to denote the solution obtained using our method. One can see that the mean and the STD of the DSM solution match the mean and the STD of the exact solution very well, and the multiscale solution has heterogeneous structures.
\begin{figure}[tbph]
  \centering
  \includegraphics[width=0.7\textwidth]{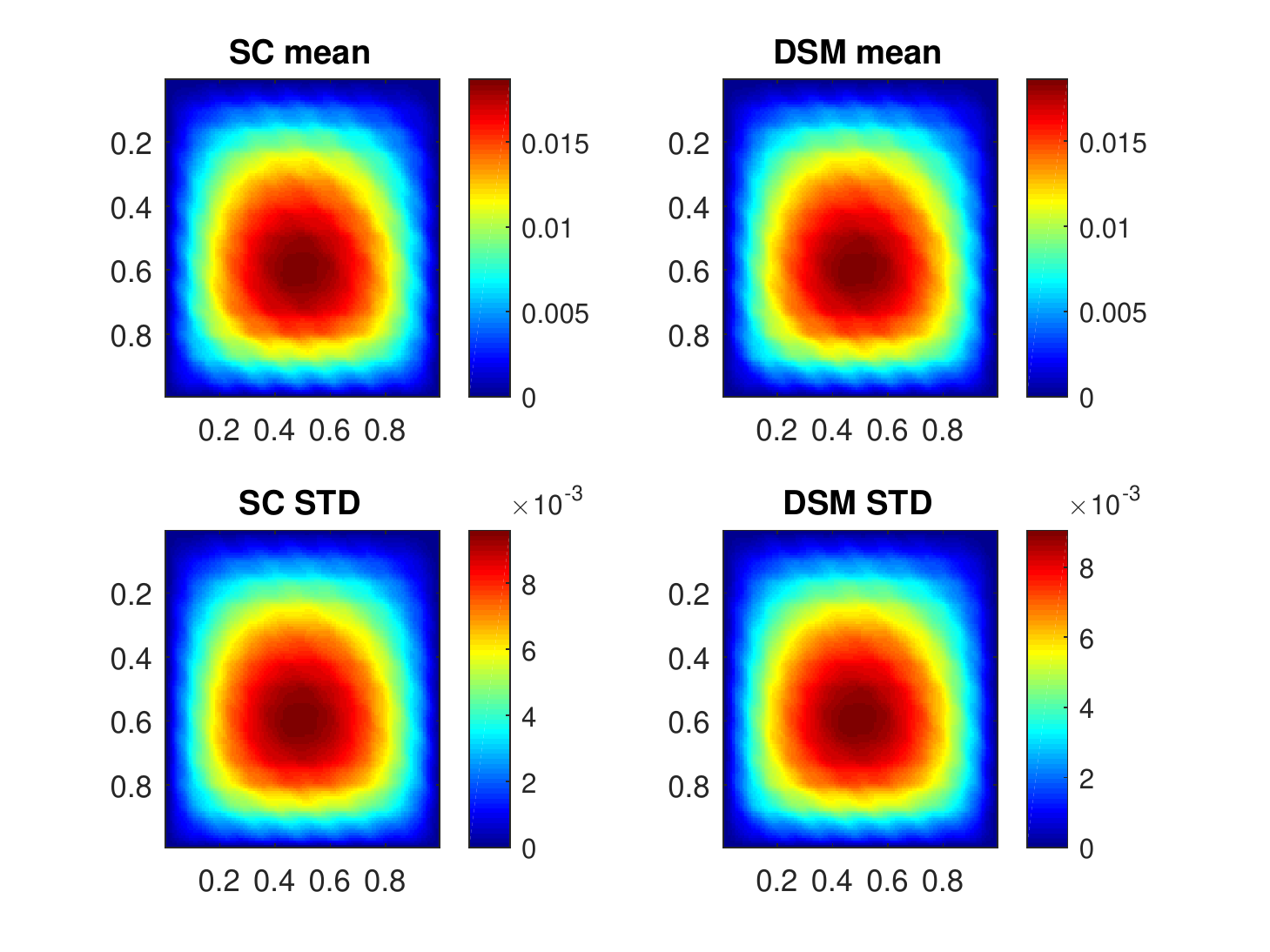}
  \caption{Profiles of the mean and the STD solution.}
  \label{fig:ProfilesOfOneSolution_EX1}
\end{figure}

In Fig.\ref{fig:NumEX1_ProfileOfBasisFunctions}, we plot the profiles of the data-driven stochastic basis functions  obtained from our method, which are associated with the grid $(0.5,0.5)$. One can find the exponential decay of the basis functions. Therefore, we can localize the computational domain of the basis functions and reduce the computational cost.
\begin{figure}[tbph]
  \centering
  \begin{subfigure}{0.39\textwidth}
    \centering
    \includegraphics[width=\textwidth]{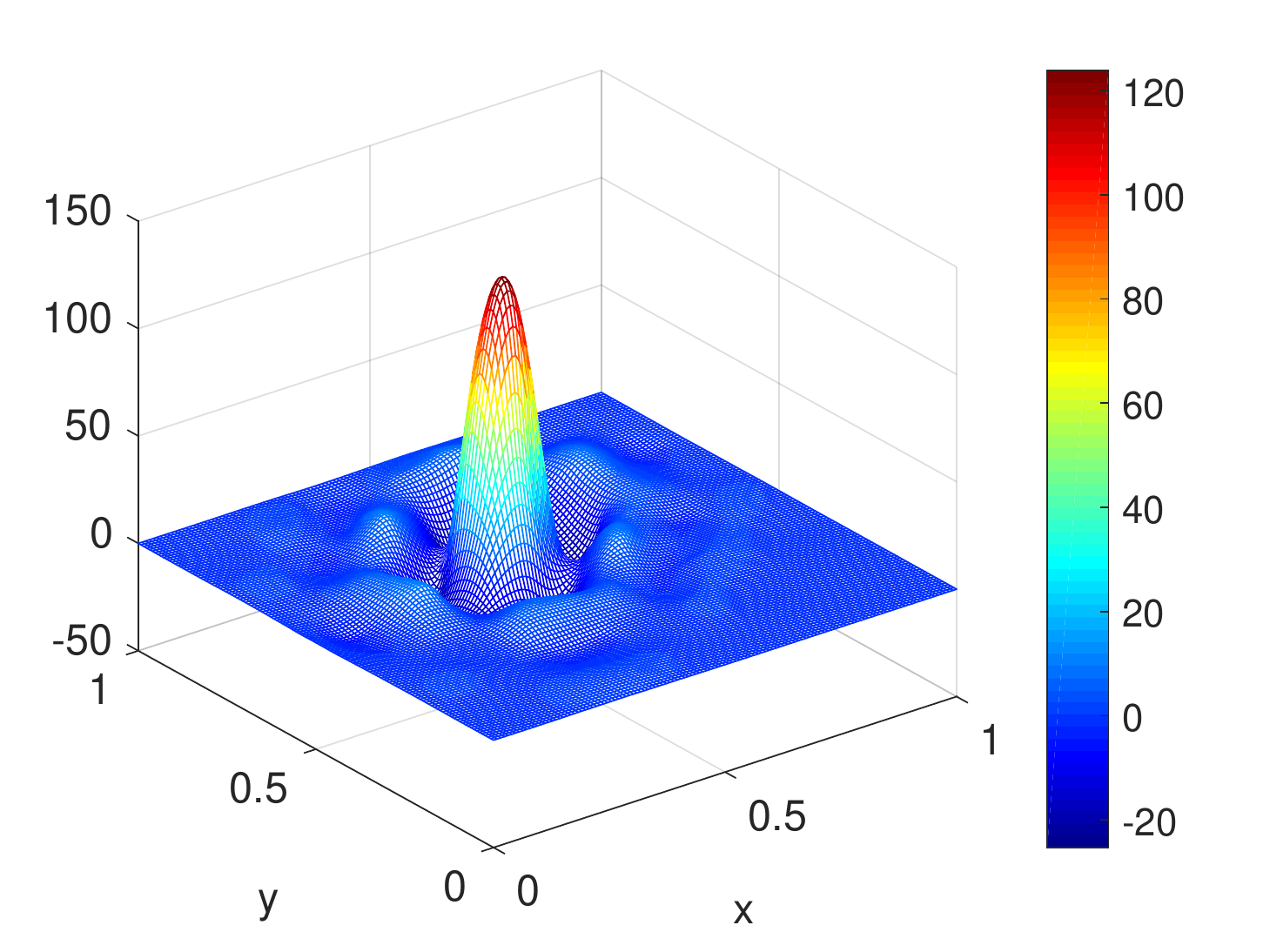}
    \label{fig:ex3_mean}
  \end{subfigure}
  \begin{subfigure}{0.39\textwidth}
    \centering
    \includegraphics[width=\textwidth]{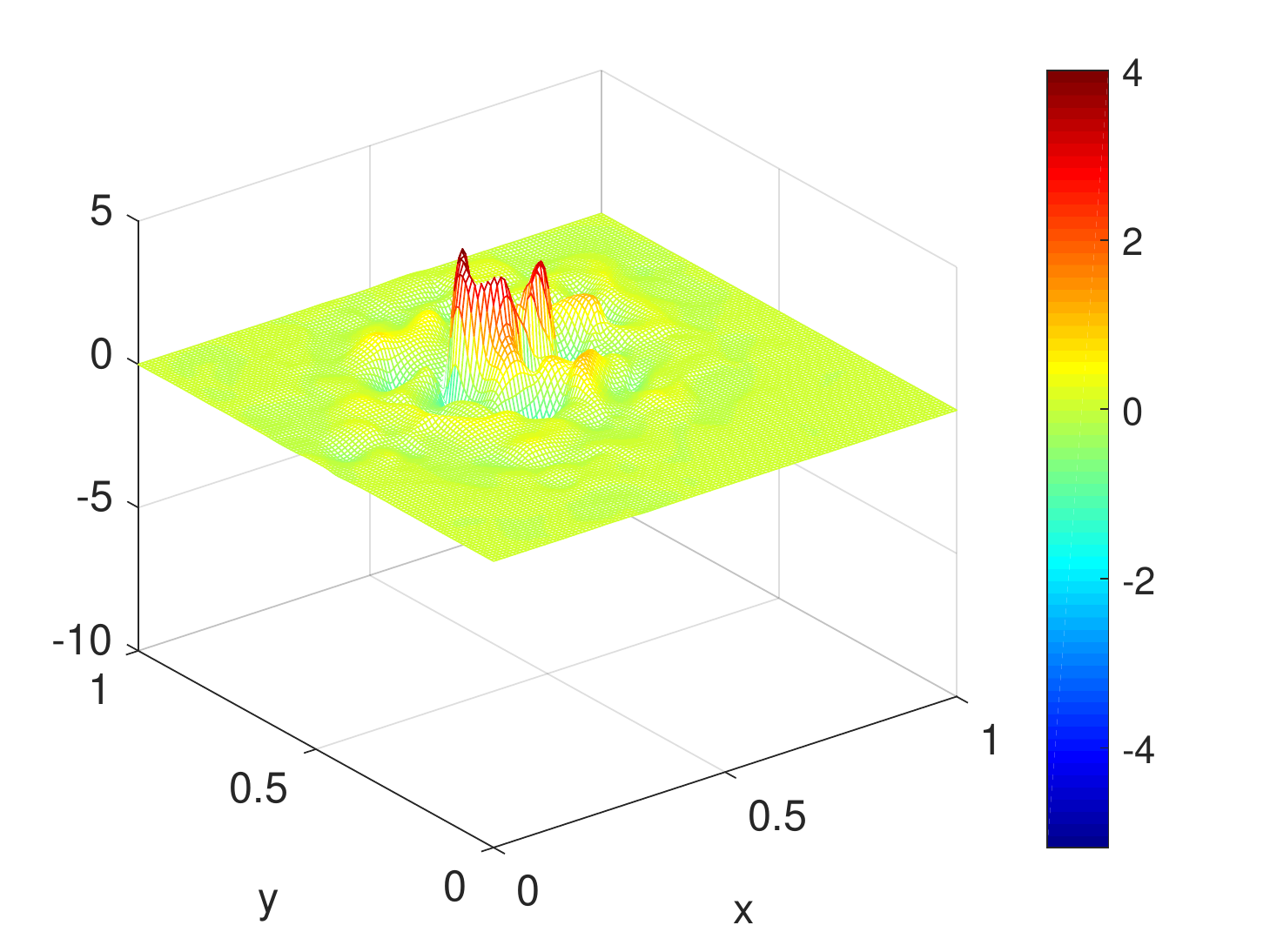}
    \label{fig:ex3_std}
  \end{subfigure}
  \caption{Profiles of the data-driven stochastic basis functions. Left is the projection on the zeroth order gPC basis. Right is the projection on one of the first order gPC basis.}
  \label{fig:NumEX1_ProfileOfBasisFunctions}
\end{figure}

\emph{Verification of the convergence rate with respect to meshsize H.}
We shall test two different coefficients. In the first case, the coefficient is parameterized by one random variable,
\begin{align}
a^{\varepsilon}(x,y,\omega)=0.1+\frac{2+p_1\sin(\frac{2\pi(x-y)}{\varepsilon_1})}
{2-p_1\cos(\frac{2\pi(x-y)}{\varepsilon_1})}\xi_1(\omega),
\label{MsDSM_NumEX1_ConvergeForH}
\end{align}
where $p_1=1.6$ and $ \varepsilon_1=\frac{1}{14}$.  The highest order of the gPC basis functions is 7. We change the coarse mesh grid from $4\times 4$ to $64\times 64$. We compare the results on different meshes and calculate the numerical error with respect to the reference solution obtained by using the fine mesh $\frac{1}{256}$. In the second case, the coefficient is given by Eq.\eqref{MsDSM_NumEX1_Coef}, which is parameterized by three random variables. The highest order of the gPC basis functions is 4. We choose the coarse mesh grids as $ 5\times 5$ , $10 \times 10$, $12 \times 12$, and $15 \times 15$. In Fig.\ref{fig:NumEX2_ConvergeForH},  we plot the convergence results with respect to meshsize $H$. For both experiments, we obtain a first order convergence for the error in the $H^1$ norm, which agrees with our error analysis.
\begin{figure}[tbph]
  \centering
  \begin{subfigure}{0.39\textwidth}
    \centering
    \includegraphics[width=\textwidth]{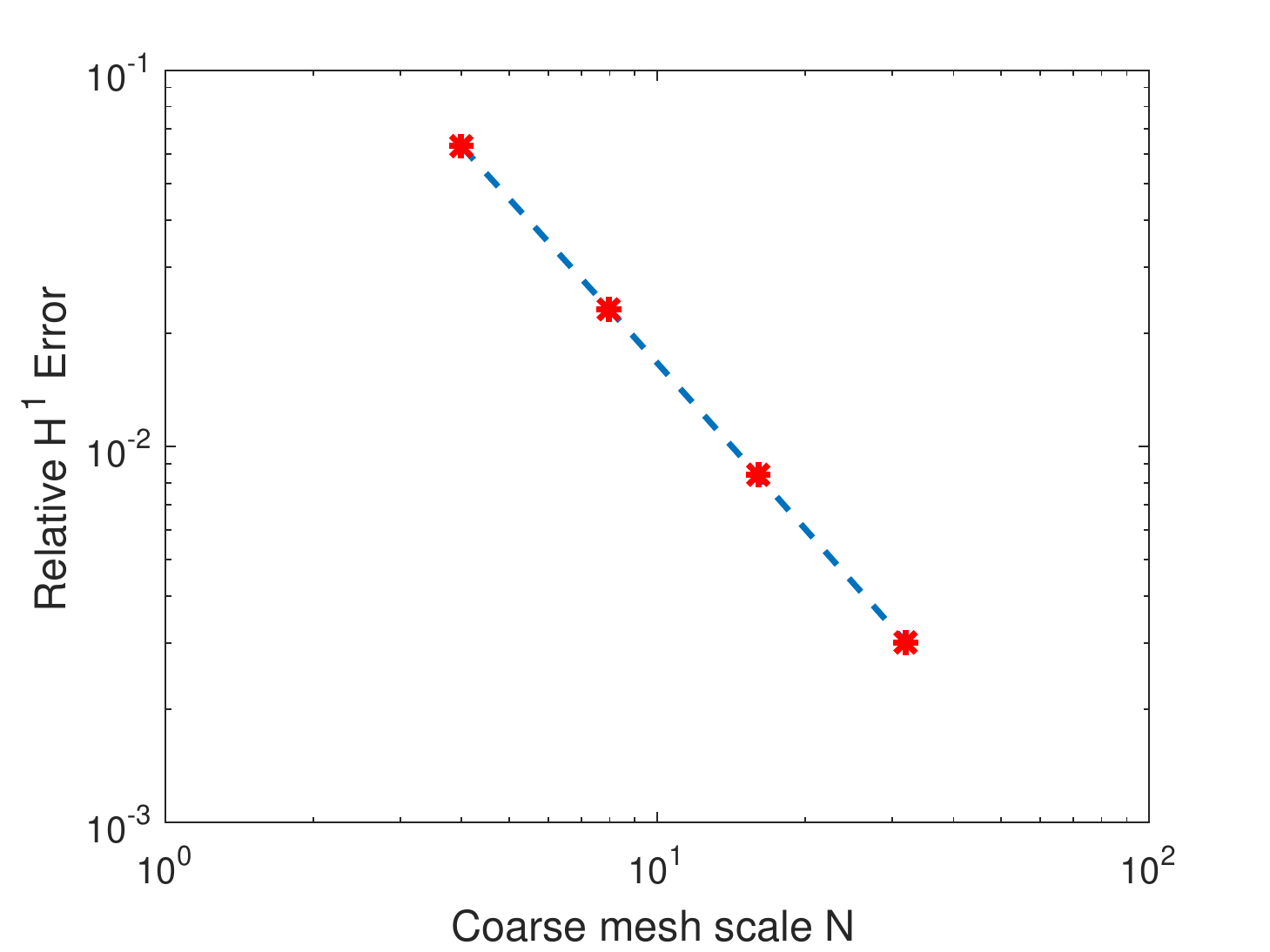}
    \label{fig:ex3_mean}
  \end{subfigure}
  \begin{subfigure}{0.39\textwidth}
    \centering
    \includegraphics[width=\textwidth]{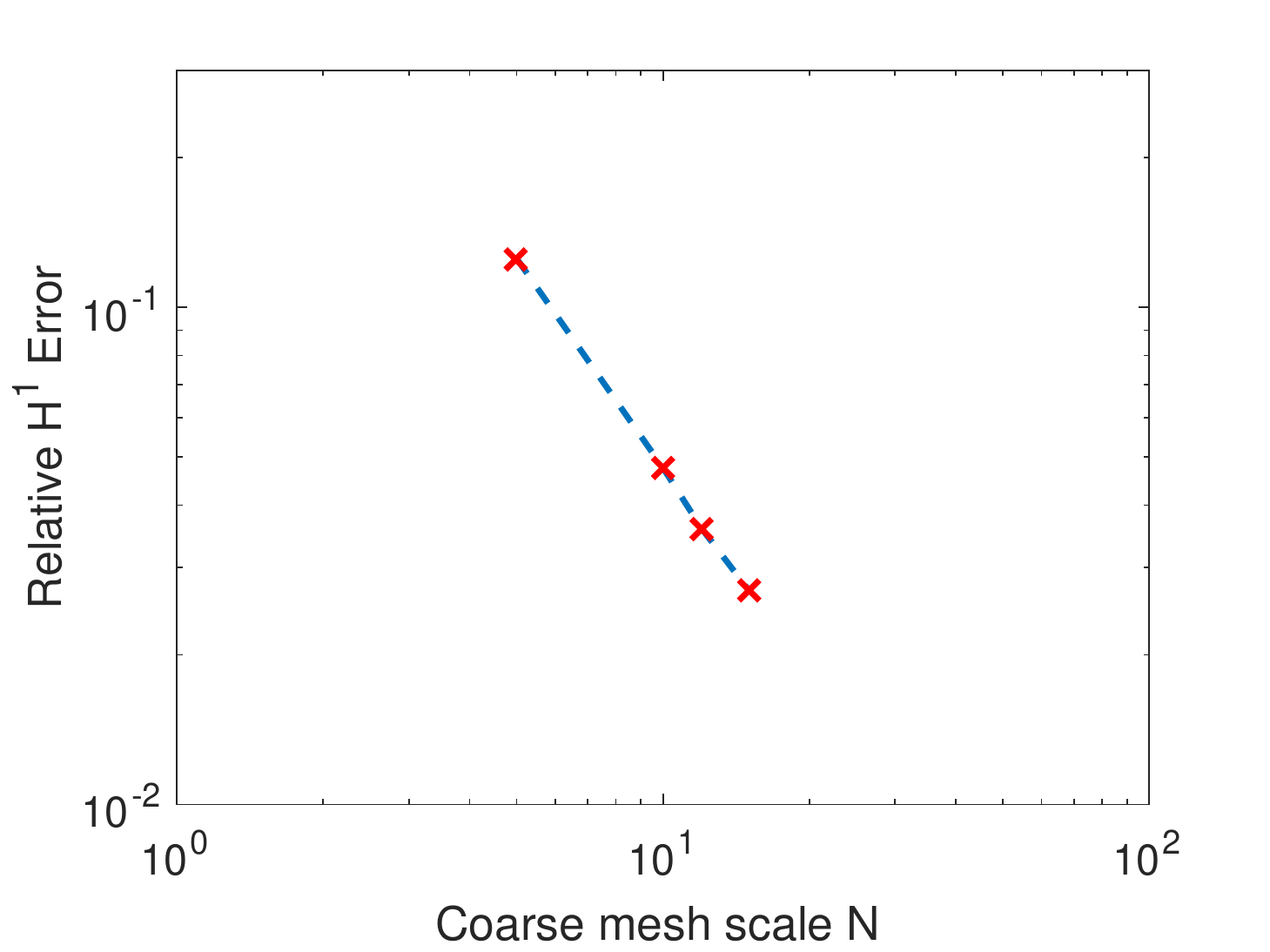}
    \label{fig:ex3_std}
  \end{subfigure}
  \caption{Convergence results with respect to mesh size. Left is for the one r.v. case. Right is for the three r.v.s case.}
  \label{fig:NumEX1_ConvergeForH}
\end{figure}
%

\emph{Verification the convergence rate with respect to the polynomial order.} We shall fix the coarse mesh size to be $H=\frac{1}{16}$. We choose the coarse mesh in such a way that the error from the physical space is small. The coefficient is given by Eq.\eqref{MsDSM_NumEX1_ConvergeForH}, which is parameterized by one random variable. Then we take the polynomial order from $1$ to $15$. In Fig.\ref{fig:NumEX1_ConvergeForgPC},  we plot convergence results with respect to different polynomial orders. One can see the exponential decay with respect to the polynomial orders increase. When the order is bigger than 6, other sources of errors become dominant. In the second case, the coefficient is given by Eq.\eqref{MsDSM_NumEX1_Coef}, which is parameterized by three random variables. We observe the qualitative decay of the error (not shown here).
\begin{figure}[tbph]
  \centering
  \begin{subfigure}{0.39\textwidth}
    \centering
    \includegraphics[width=\textwidth]{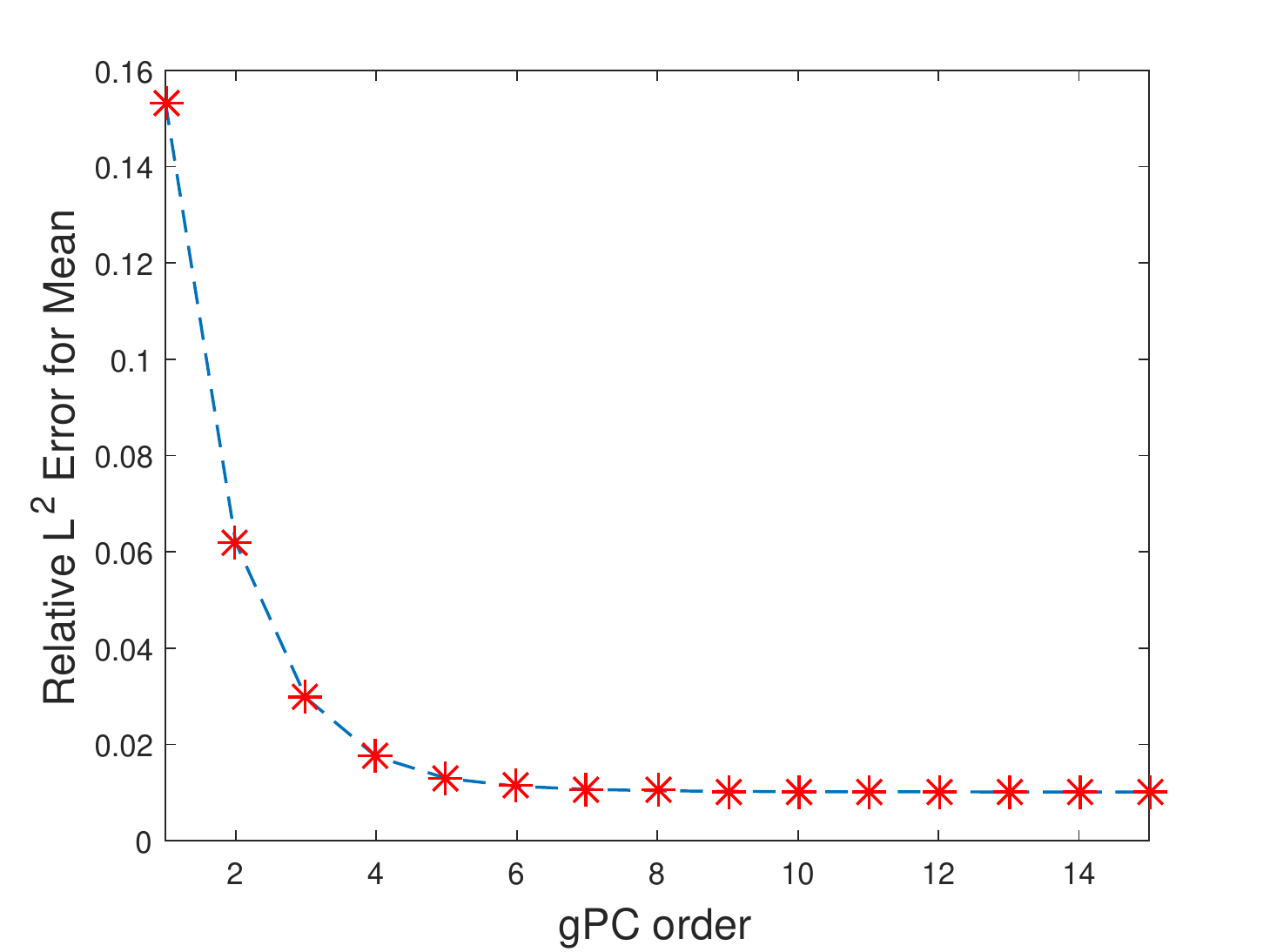}
    \label{fig:ex3_mean}
  \end{subfigure}
  \begin{subfigure}{0.39\textwidth}
    \centering
    \includegraphics[width=\textwidth]{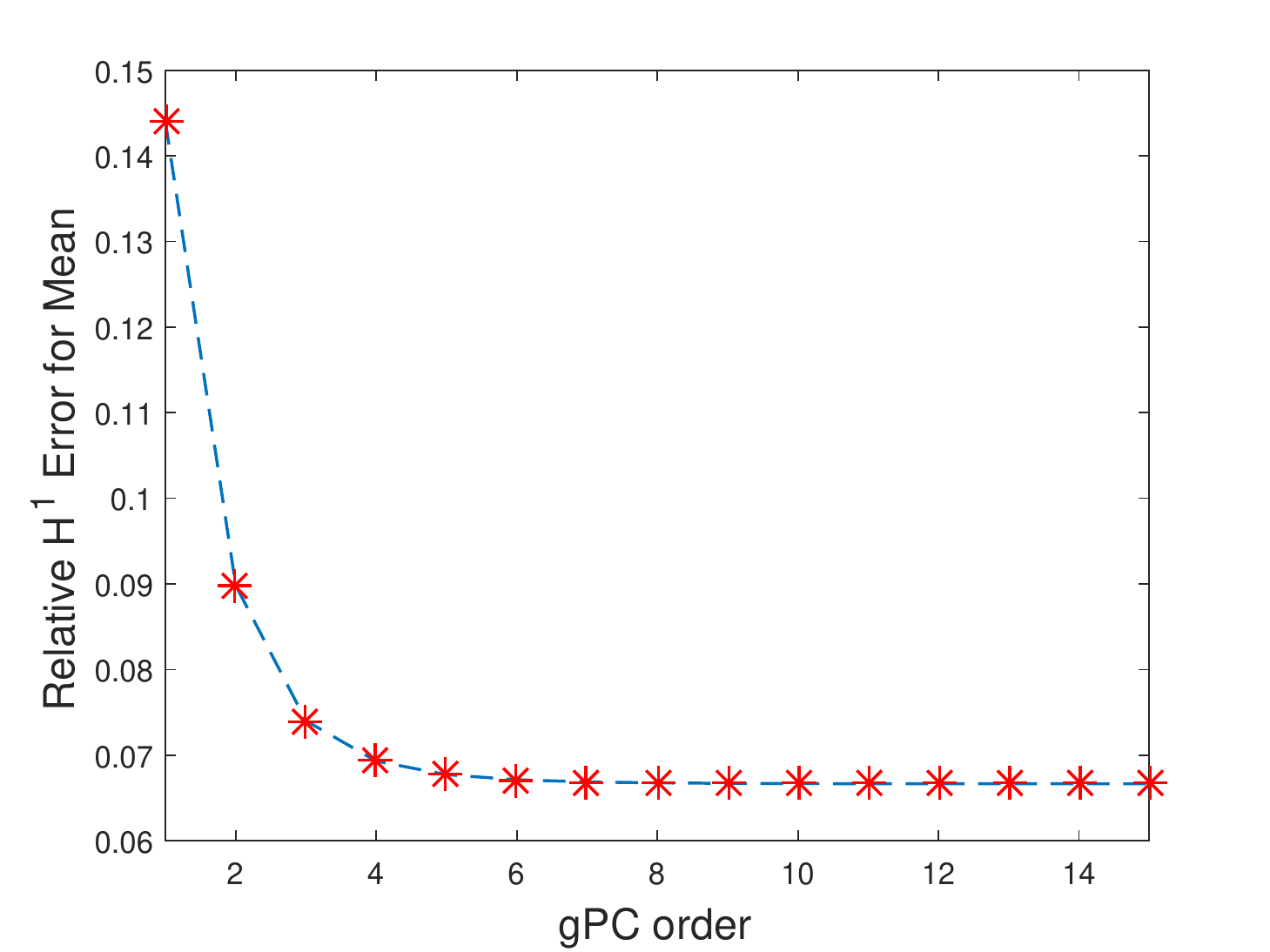}
    \label{fig:ex3_std}
  \end{subfigure}
  \caption{Convergence results with respect to the polynomial chaos order, where $H=\frac{1}{16}$.}
  \label{fig:NumEX1_ConvergeForgPC}
\end{figure}

\emph{Investigate the decay of the eigenvalue $\mu_{i}$.}
We shall show that the eigenvalues $\mu_{i}$ that appear in Eq.\eqref{POD_RandomSpace} have exponentially decay property. Therefore, we only need to use a small number of multiscale data-driven stochastic basis functions in the random space. We test the problem \eqref{MsDSM_NumEX1_Eq} with coefficient \eqref{MsDSM_NumEX1_Coef} and 
$f(x,y)=\sin(2.3\pi x + 0.2)\cos(1.5\pi y - 0.3)$. We choose the fine mesh with $256 \times 256$ grids and
the highest total order of the gPC polynomials is 4. In Fig.\ref{fig:NumEX1_EigenvalueDecay}, we plot the eigenvalues $\mu_{i}$, $i=1,...,10$ of the matrix $YY^{T}$ with $Y$ defined in \eqref{gPC_coefficient_Y}. One can see that the eigenvalues $\mu_{i}$ indeed decay exponentially fast.
\begin{figure}[tbph]
  \centering
  \includegraphics[width=0.5\textwidth]{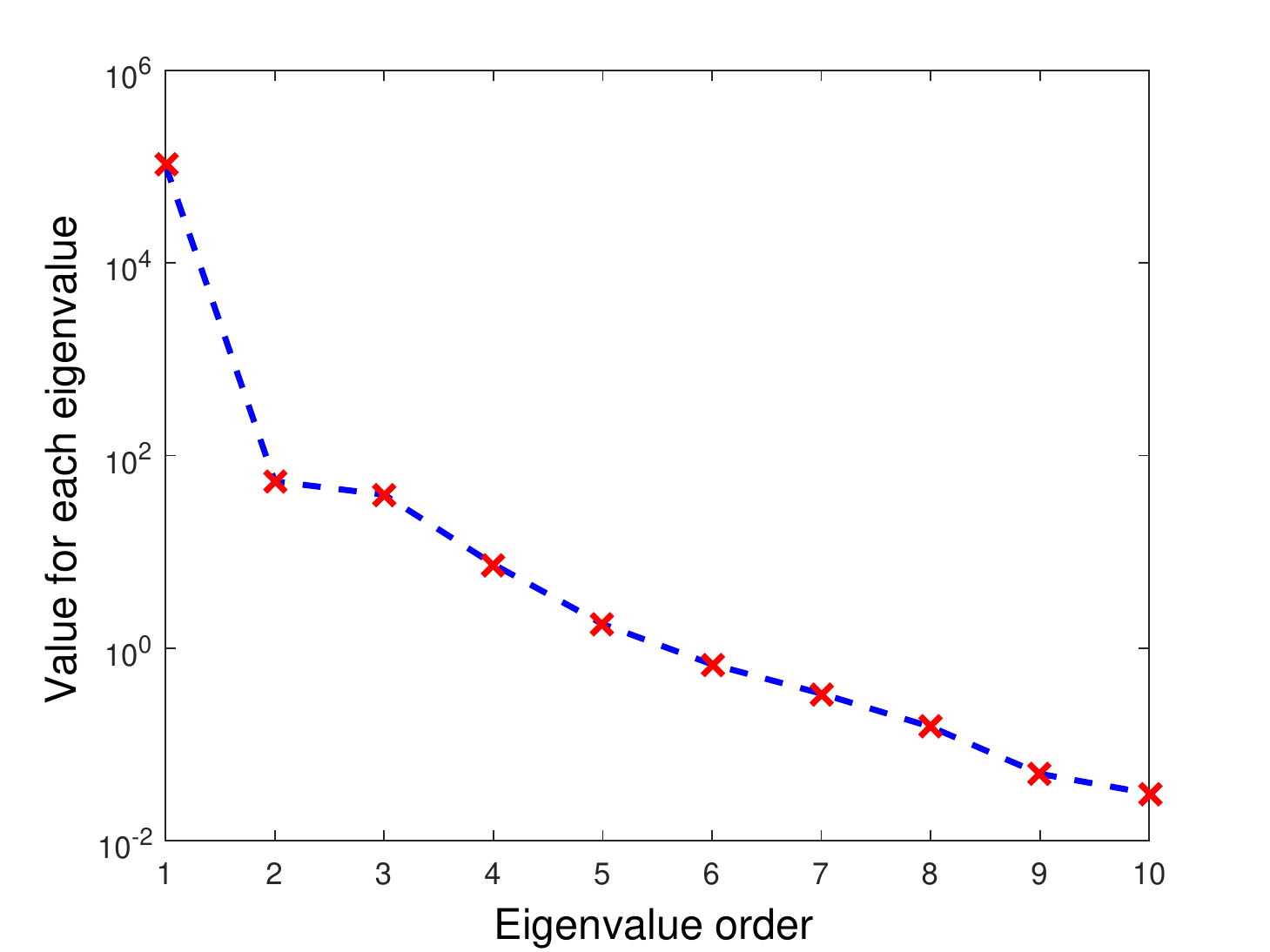}
  \caption{Decay speed of the eigenvalues.}
  \label{fig:NumEX1_EigenvalueDecay}
\end{figure}

\subsection{An example with a coefficient that does not have scale separation.}\label{sec:NumericalExample2}
\noindent
We consider the problem \eqref{MsDSM_NumEX1_Eq}-\eqref{MsDSM_NumEX1_BC} on $D=[0,1]\times[0,1]$ with a coefficient
that does not have scale separation. The coefficient $a^{\varepsilon}(x,y,\omega)$ is a random linear combination of three deterministic coefficient fields plus a constant, i.e.,
\begin{align}
a^{\varepsilon}(x,y,\omega) = \sum_{i=1}^{3}\xi_{i}(\omega)k_{i}(x,y) + 0.5 \label{MsDSM_NumEX2_Coef}
\end{align}
where $\{\xi_{i}\}_{i=1}^{3}$ are independent uniform random variables in $[0,1]$,
and $k_{i}(x,y)$, $i=1,...,3$ are some deterministic coefficients without scale separation. Specifically, $k_{i}(x,y)=|\theta_{i}(x,y)|$, where $\theta_{i}(x,y)$, $i=1,...,3$ are defined on a  $5\times 5$, $9\times 9$, and $17\times 17$ grids over the domain $D$. For each grid cell, the value of $\theta_{i}(x,y)$ is normally distributed. In Fig.\ref{fig:CoefSamples_EX2}, we show four samples of the coefficient $a^{\varepsilon}(x,y,\omega)$. One can see that the coefficient does not have any periodic structure.
\begin{figure}[tbph]
  \centering
  \begin{subfigure}{0.39\textwidth}
    \centering
    \includegraphics[width=\textwidth]{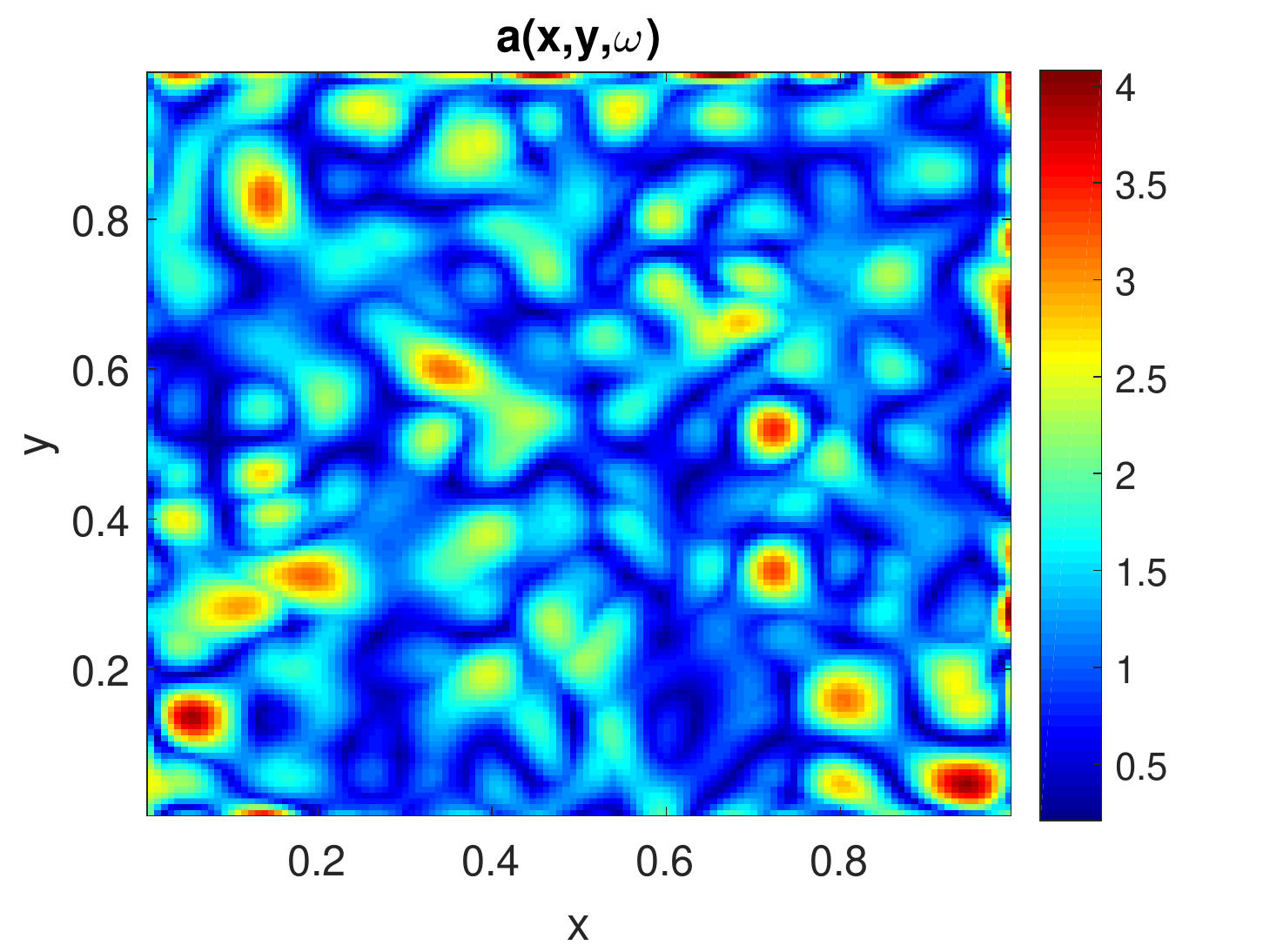}
  \end{subfigure}
  \begin{subfigure}{0.39\textwidth}
    \centering
    \includegraphics[width=\textwidth]{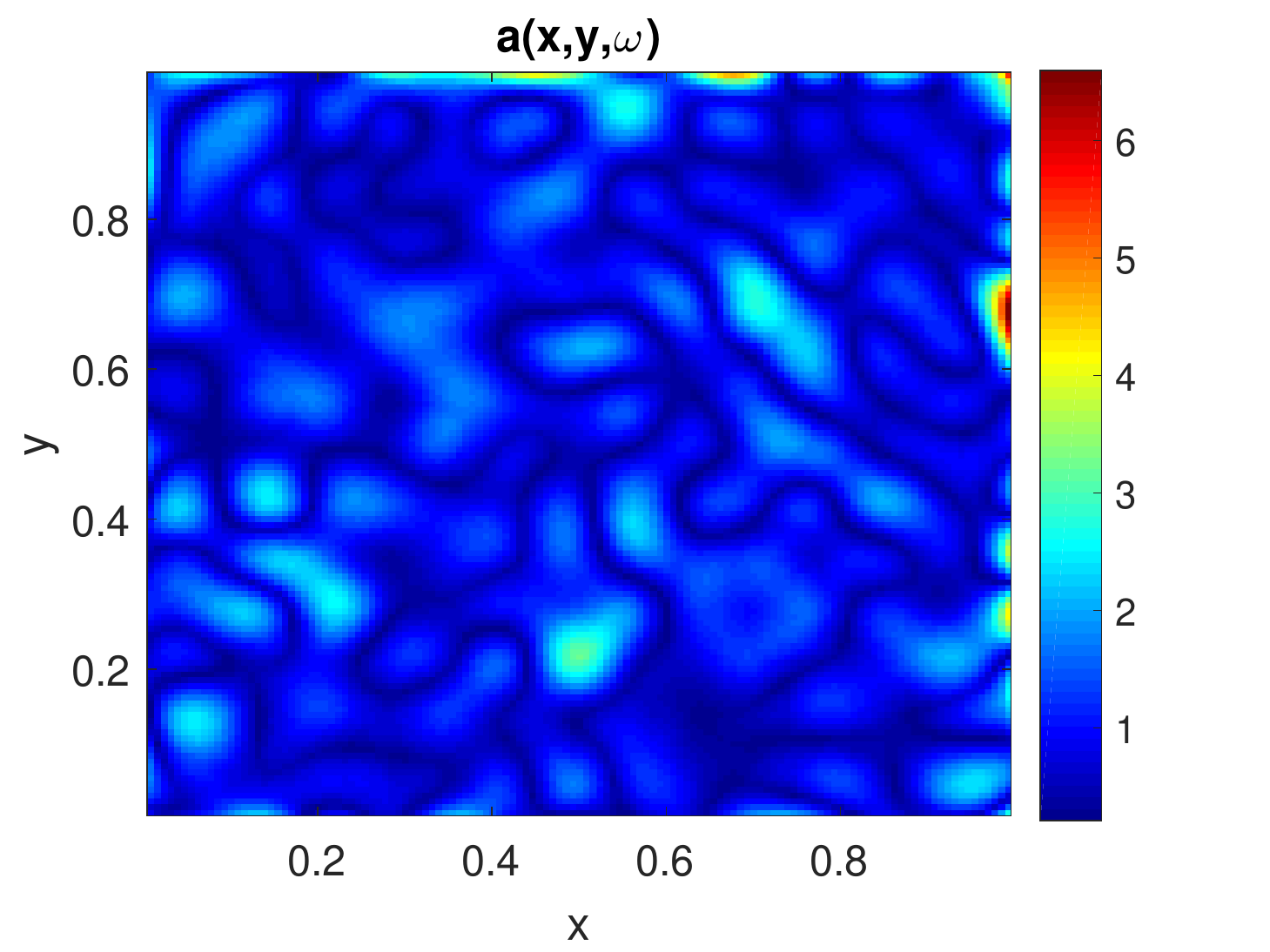}
  \end{subfigure}
  \\
  \begin{subfigure}{0.39\textwidth}
    \centering
    \includegraphics[width=\textwidth]{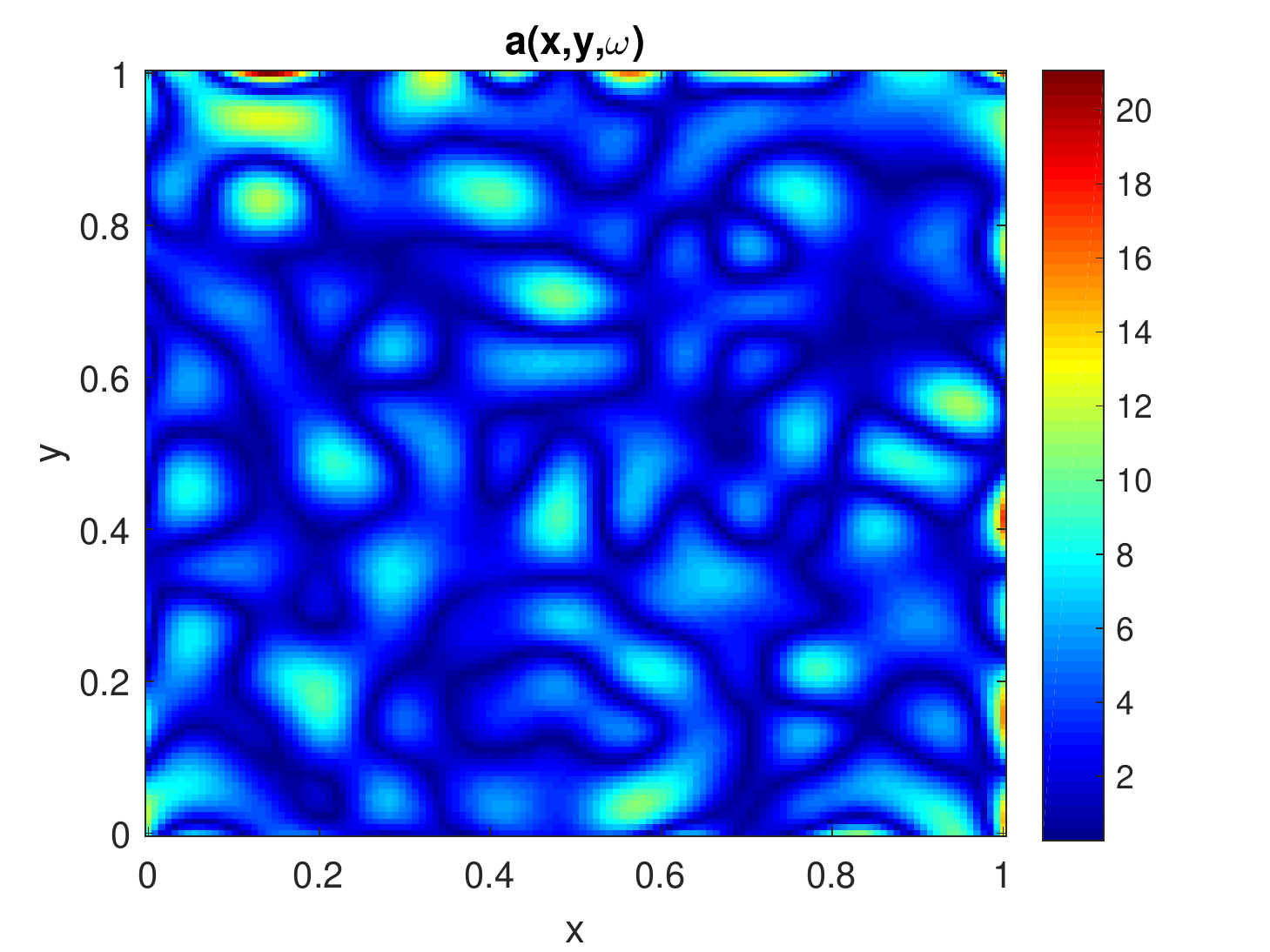}
  \end{subfigure}
  \begin{subfigure}{0.39\textwidth}
    \centering
    \includegraphics[width=\textwidth]{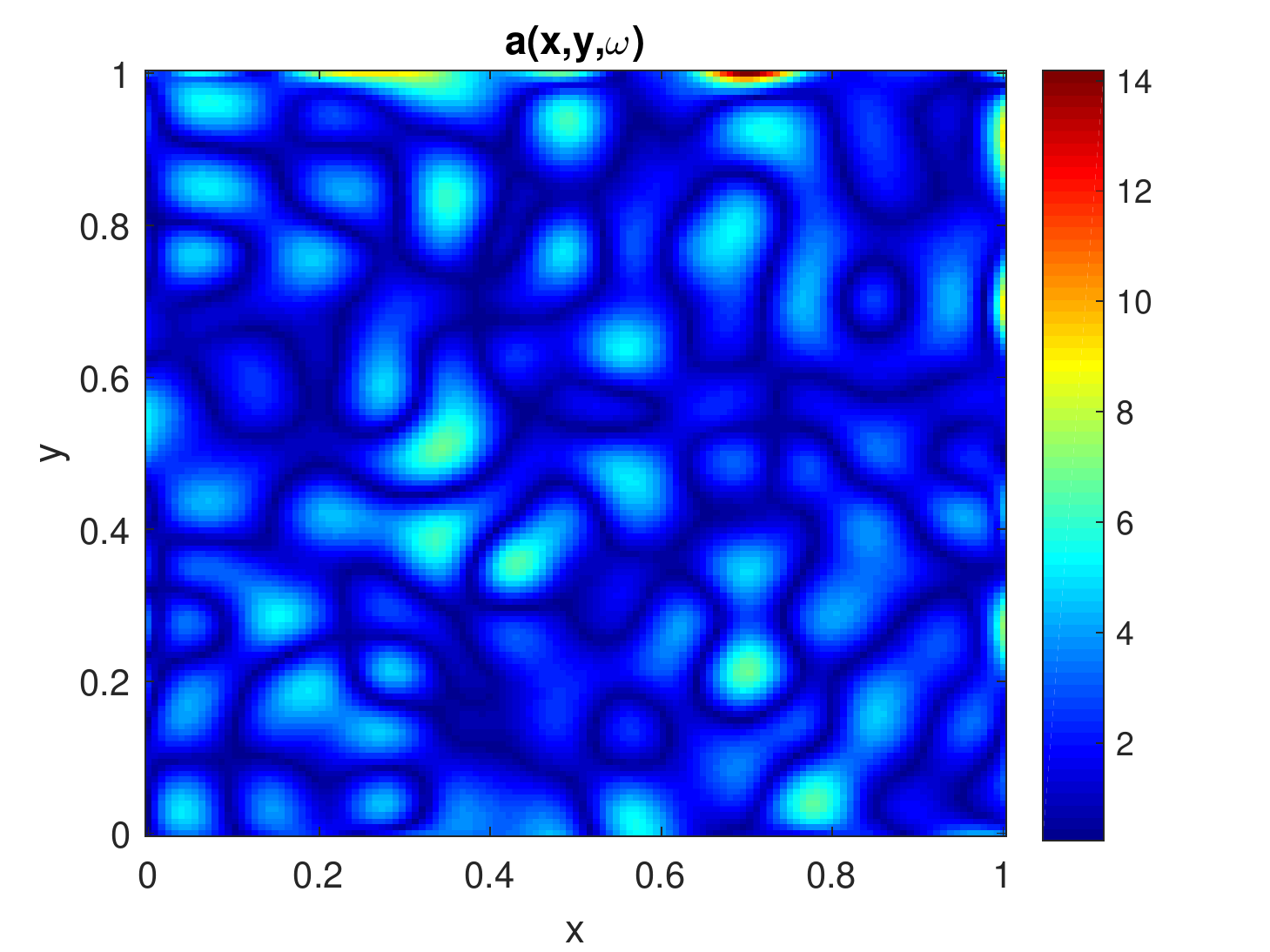}
  \end{subfigure}
  \caption{Some coefficient samples of $a(x,y,\omega)$.  }
  \label{fig:CoefSamples_EX2}
\end{figure}

\emph{Multiquery results in the online stage.}
The implementation of the SCFEM and our method are exactly the same as in the previous example. In the online stage we use them to solve the effective equation of the multiscale SPDE \eqref{MsDSM_NumEX1_Eq}. We randomly generate 20 force functions of the form $f(x,y)\in\{\sin(k_{i}\pi x + l_{i})\cos(m_{i}\pi x + n_{i}\}_{i=1}^{20}$, where $k_{i}$, $l_{i}$, $m_{i}$, and $n_{i}$ are random numbers.  In Fig.\ref{fig:MultiQuery2DErrorL2_EX2}, we show the relative error for the mean function obtained using our method in the $L^2$ norm and the $H^1$ norm, respectively. The results for the STD error is similar to those that we obtained in the previous examples (not shown here).

\emph{Verification of the convergence rate with respect to meshsize $H$.}
We shall test two different coefficients. In the first case, the coefficient is parameterized by one random variable, i.e., $a^{\varepsilon}(x,y,\omega) = \xi_{1}(\omega)k_{1}(x,y) + 0.5$.  The highest order of the gPC basis functions is 7. We change the coarse mesh grid from $4\times 4$ to $64\times 64$. We compare the results on different meshes and calculate the numerical error with respect to the reference solution obtained by the fine mesh $\frac{1}{256}$.
In the second case, the coefficient is given by Eq.\eqref{MsDSM_NumEX2_Coef}, which is parameterized by three random variables. The highest order of the gPC basis functions is 4. We choose the coarse mesh grids as $ 5\times 5$ , $10 \times 10$, $12 \times 12$, and $15 \times 15$. In Fig.\ref{fig:NumEX2_ConvergeForH},  we plot the convergence results with respect to meshsize $H$. For both experiments, we obtain first order convergence for the error in the $H^1$ norm.

\emph{Verification of the convergence rate with respect to the polynomial order.} We fix the course mesh size to be $H=\frac{1}{16}$ and the coefficient is given by $a^{\varepsilon}(x,y,\omega) = \xi_{3}(\omega)k_{3}(x,y) + 0.5$. Then we take the polynomial order from $1$ to $15$. In Fig.\ref{fig:NumEX2_ConvergeForgPC},  we plot convergence results with respect to different polynomial orders. One can find the exponential decay with respect to the polynomial orders.

\begin{figure}[tbph]
  \centering
  \begin{subfigure}{0.39\textwidth}
    \centering
    \includegraphics[width=\textwidth]{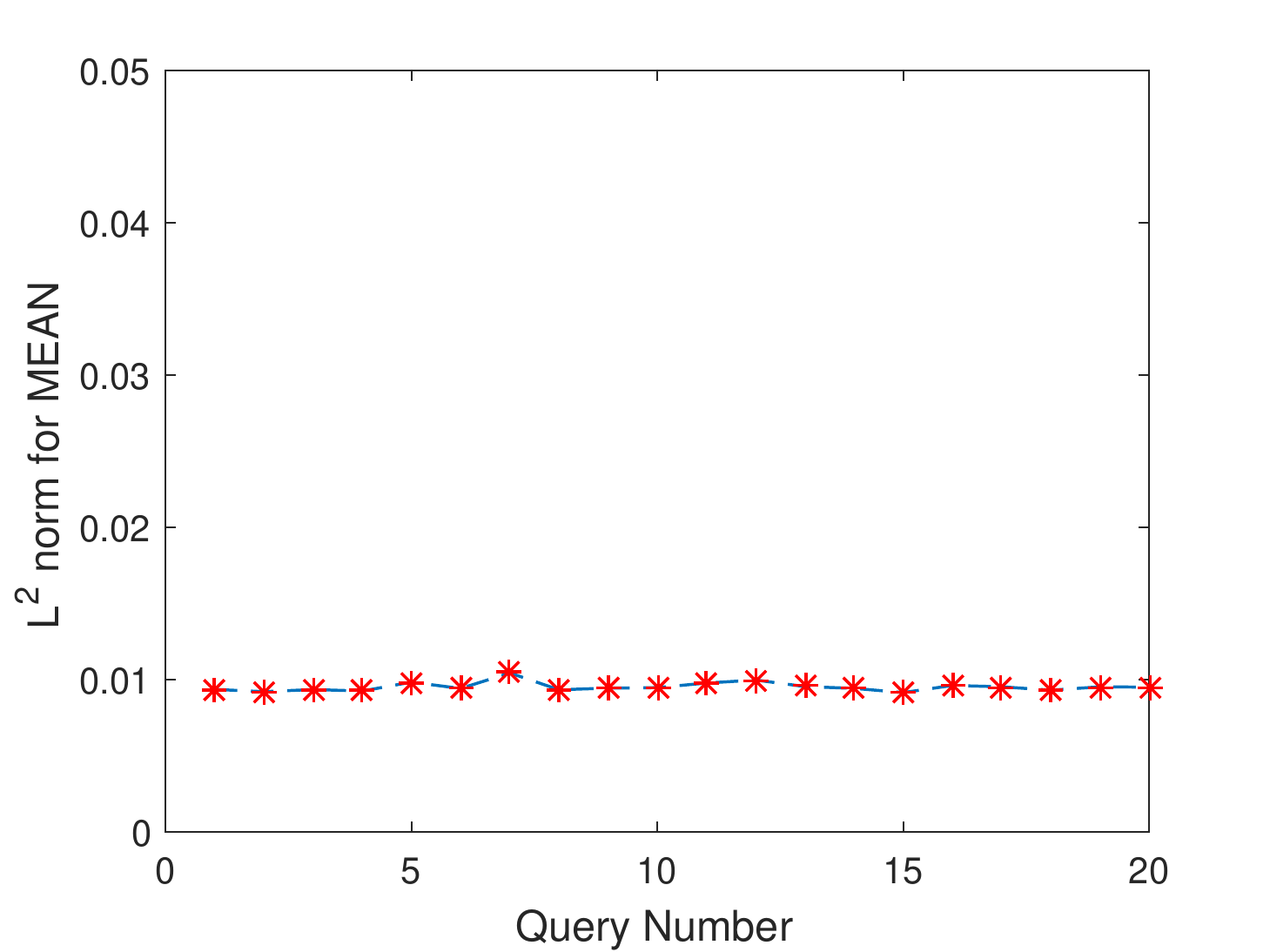}
    \label{fig:ex3_mean}
  \end{subfigure}
  \begin{subfigure}{0.39\textwidth}
    \centering
    \includegraphics[width=\textwidth]{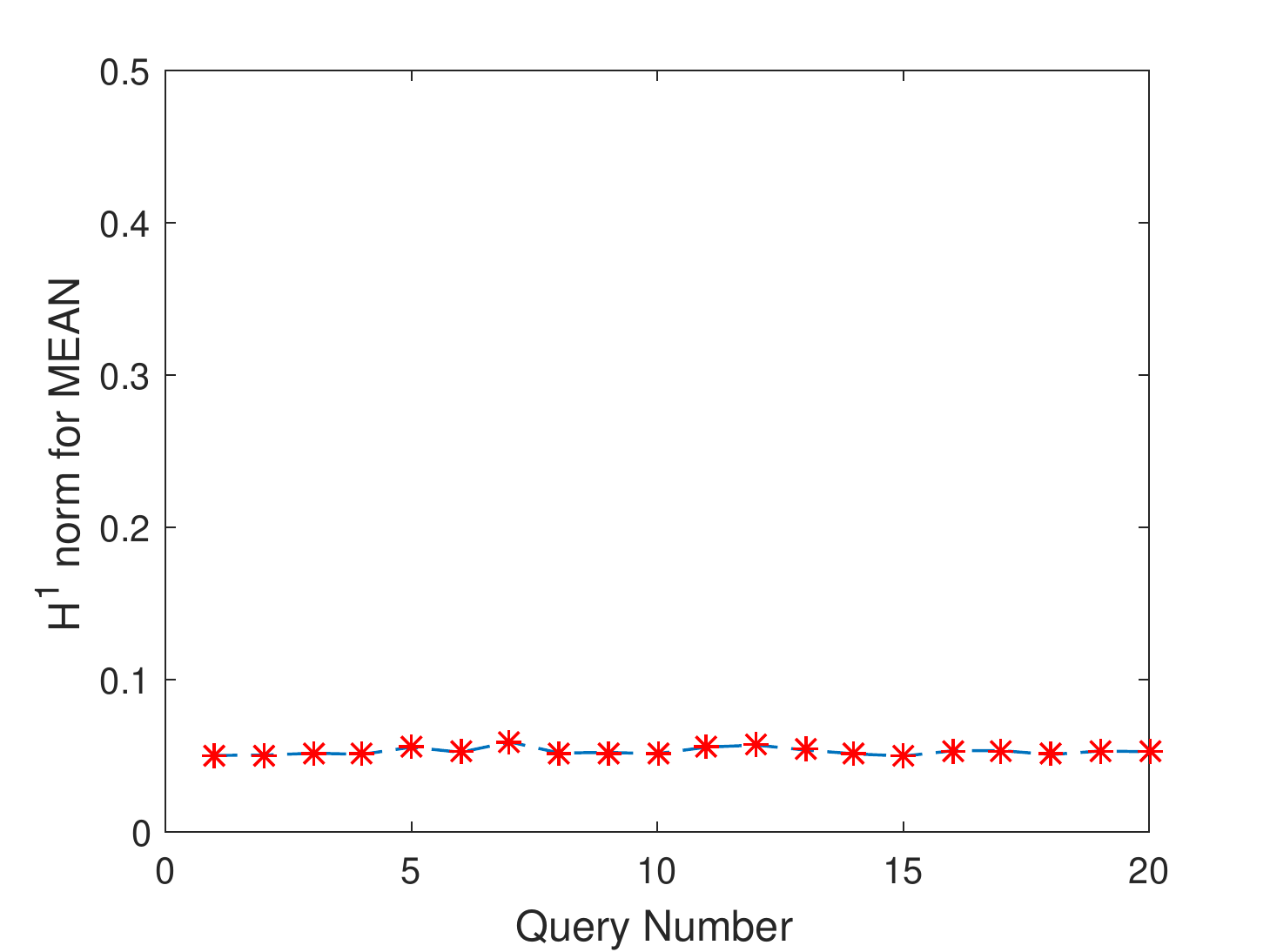}
    \label{fig:ex3_std}
  \end{subfigure}
  \caption{The mean error of our method in the $L^2$ norm and the $H^1$ norm.}
  \label{fig:MultiQuery2DErrorL2_EX2}
\end{figure}

\begin{figure}[tbph]
  \centering
  \begin{subfigure}{0.39\textwidth}
    \centering
    \includegraphics[width=\textwidth]{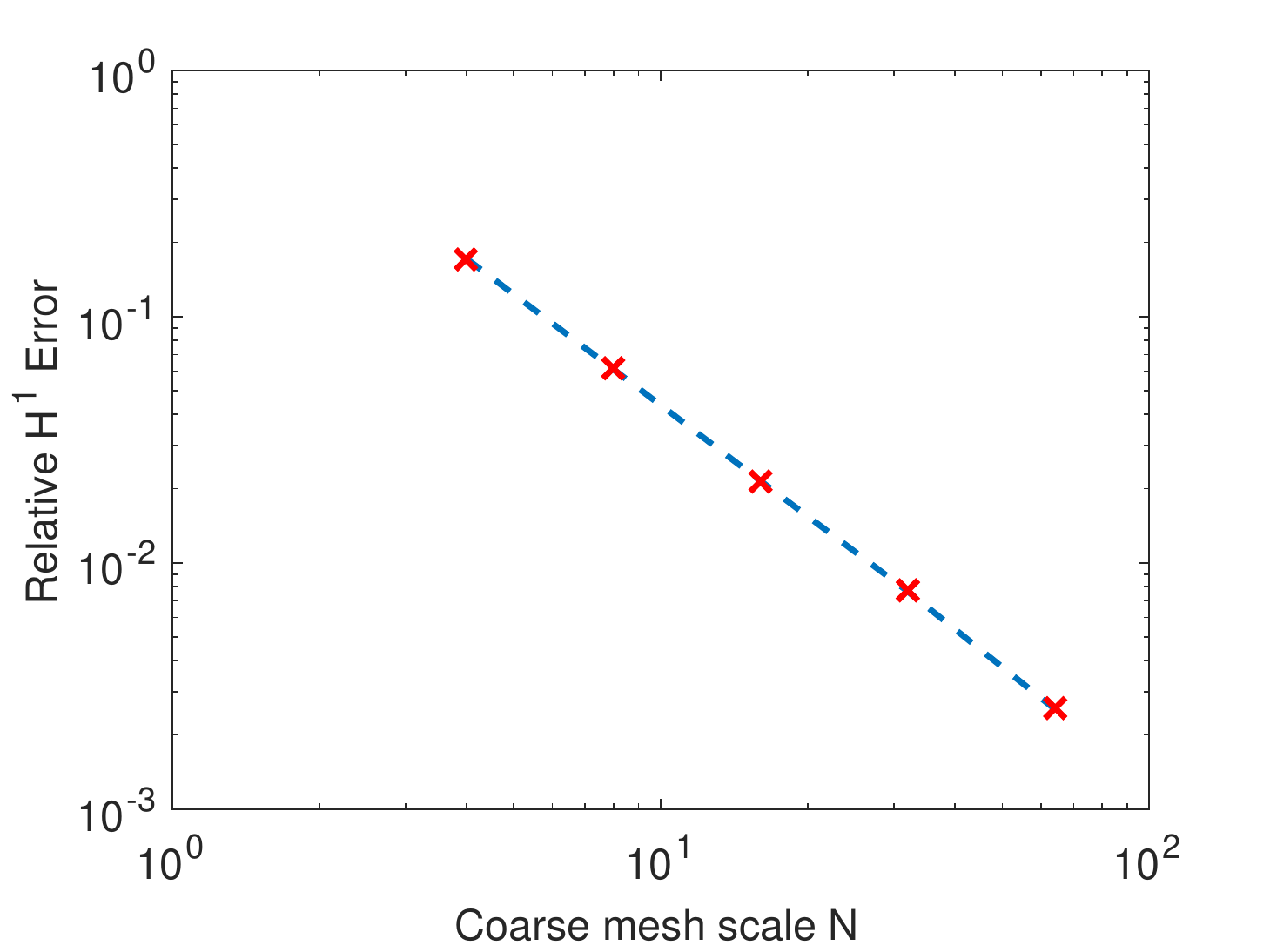}
    \label{fig:ex3_mean}
  \end{subfigure}
  \begin{subfigure}{0.39\textwidth}
    \centering
    \includegraphics[width=\textwidth]{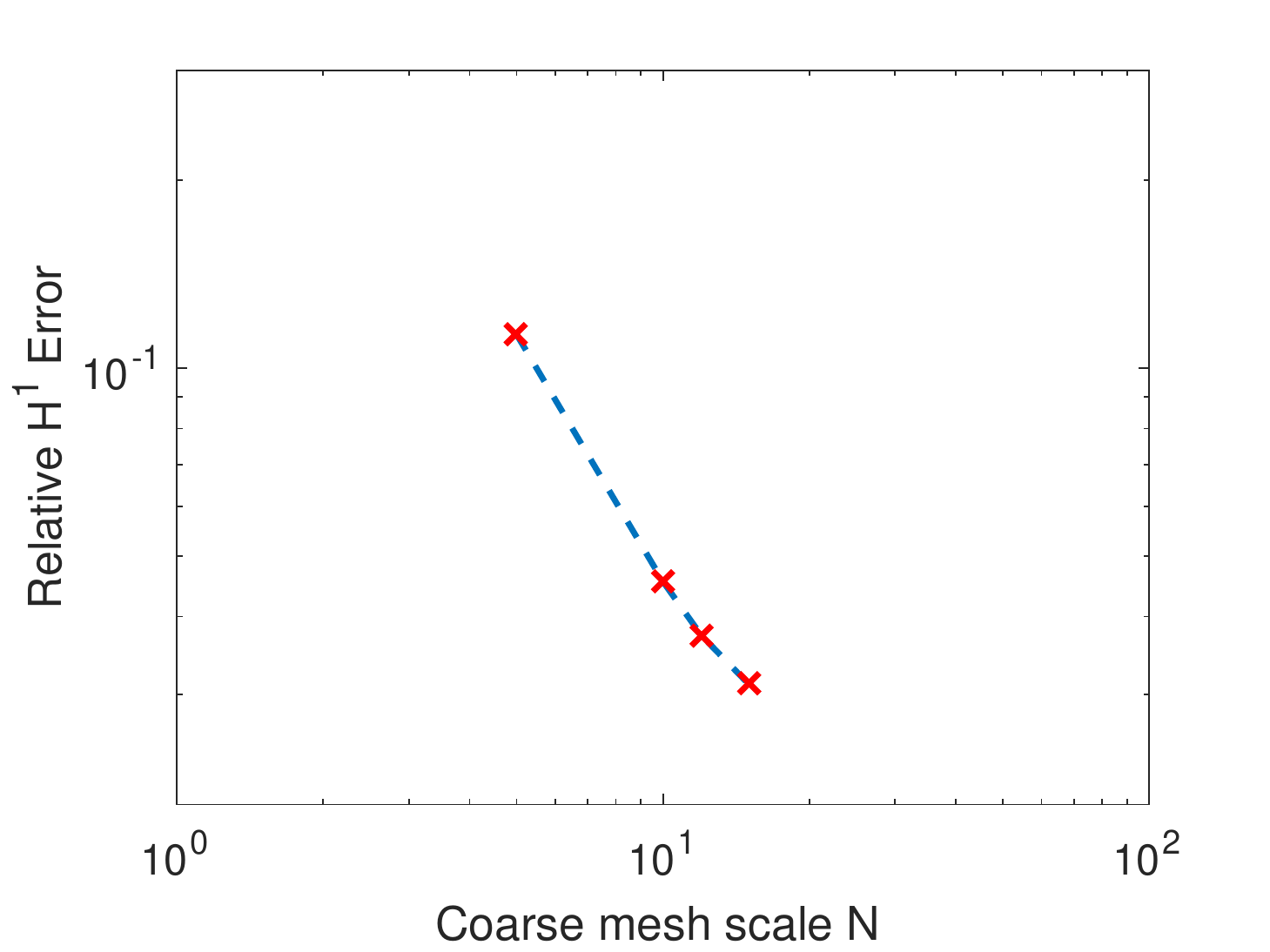}
    \label{fig:ex3_std}
  \end{subfigure}
  \caption{Convergence results with respect to mesh size. Left is for the one r.v. case. Right is for the three r.v.s case.}
  \label{fig:NumEX2_ConvergeForH}
\end{figure}

\begin{figure}[tbph]
  \centering
  \begin{subfigure}{0.39\textwidth}
    \centering
    \includegraphics[width=\textwidth]{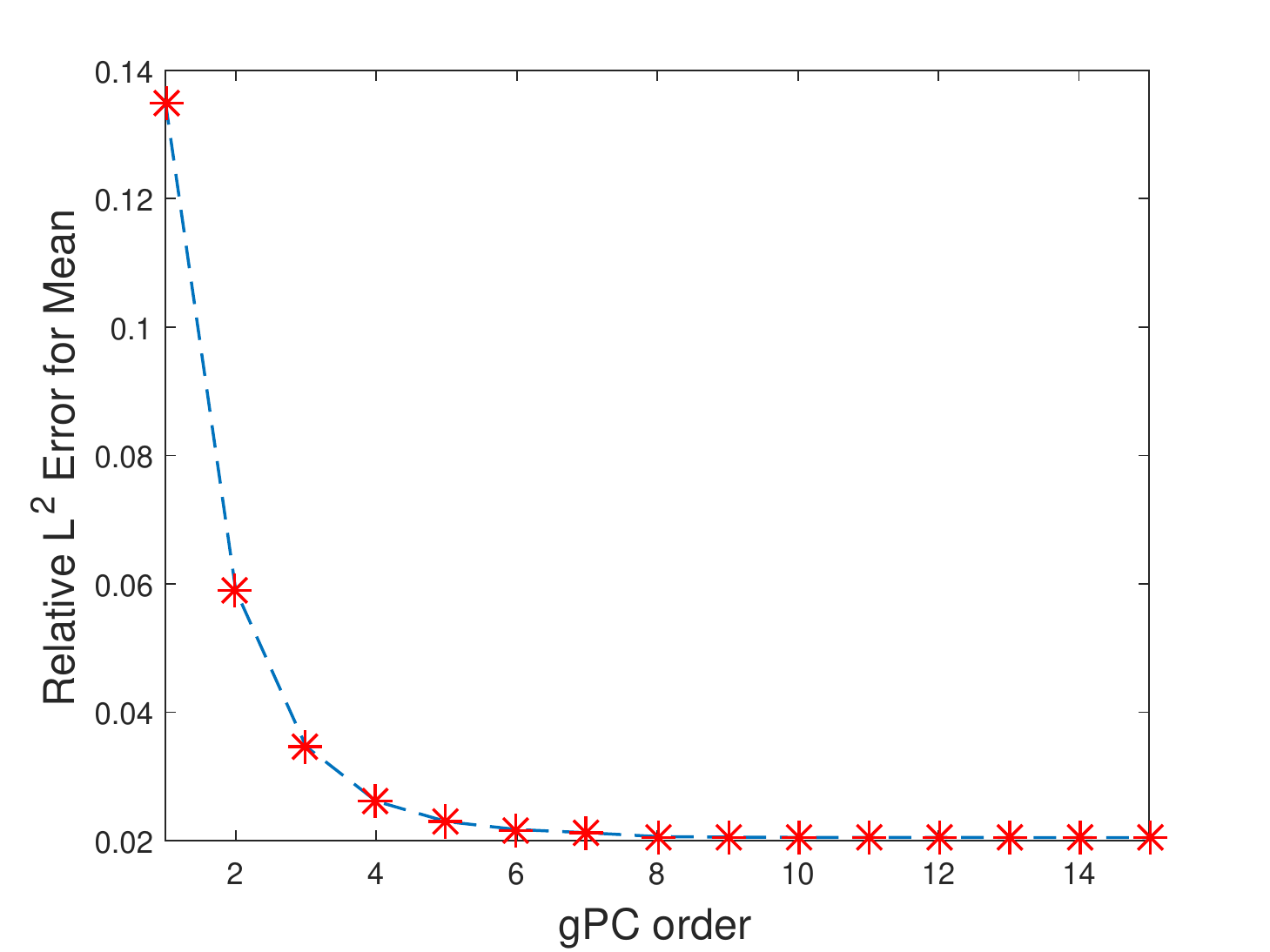}
    \label{fig:ex3_mean}
  \end{subfigure}
  \begin{subfigure}{0.39\textwidth}
    \centering
    \includegraphics[width=\textwidth]{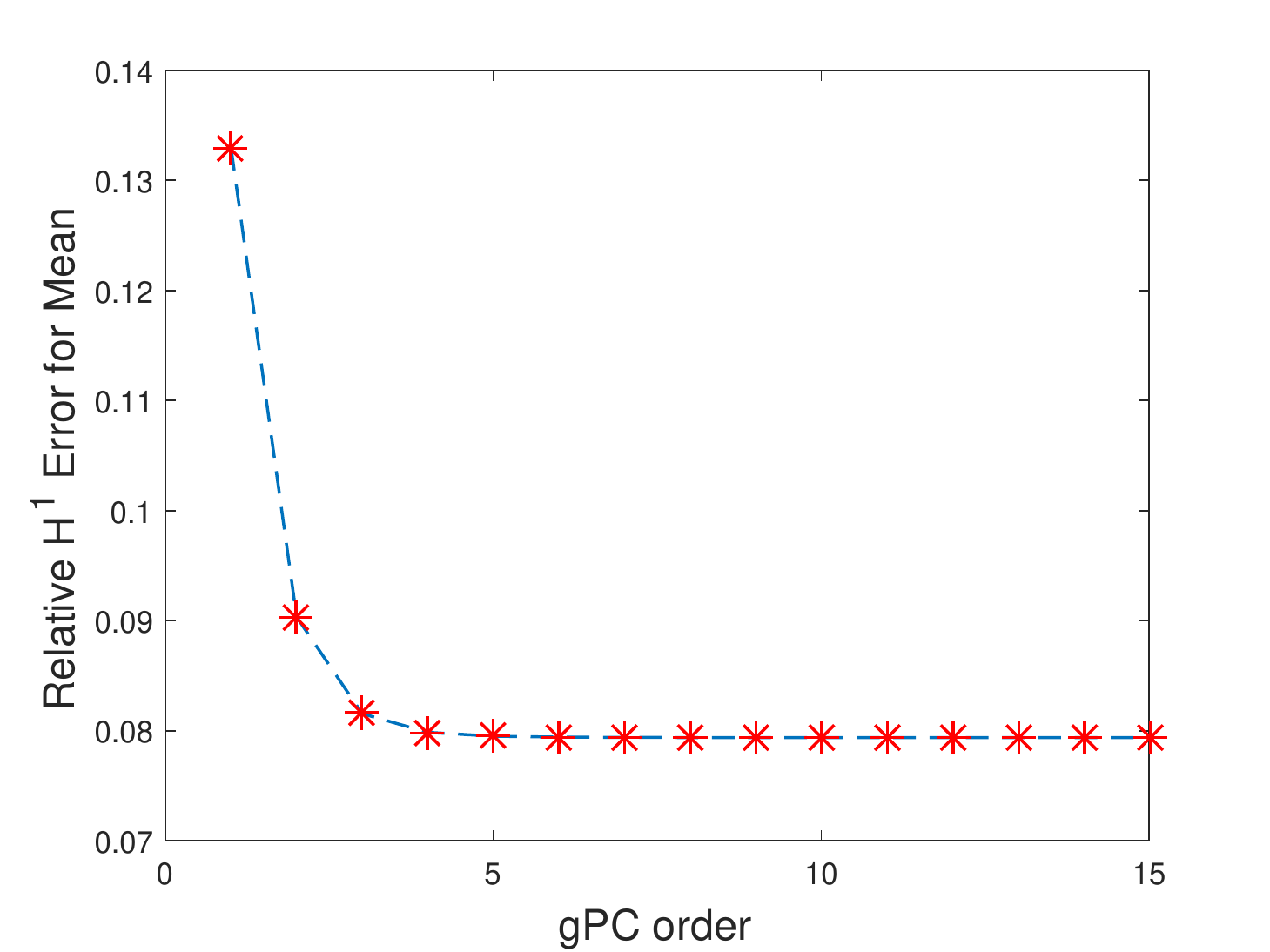}
    \label{fig:ex3_std}
  \end{subfigure}
  \caption{Convergence results with respect to the polynomial chaos order, where $H=\frac{1}{16}$.}
  \label{fig:NumEX2_ConvergeForgPC}
\end{figure}
  
\subsection{An example with a localized random coefficient}\label{sec:NumericalExample3}
\noindent
Finally we consider the problem \eqref{MsDSM_NumEX1_Eq}-\eqref{MsDSM_NumEX1_BC} on $D=[0,1]\times[0,1]$ with a
coefficient that has localized random features. The coefficient $a^{\varepsilon}(x,y,\omega)$ is
given by
\begin{small}
\begin{align}
a^{\varepsilon}(x,y,\omega) &= 0.2 + \sum_{i=1}^{3}\frac{2+p_i\sin(\frac{2\pi x}{\varepsilon_i})}{2-p_i\cos(\frac{2\pi y}{\varepsilon_i})}\xi_i(\omega)\textbf{1}_{D_1}(x,y)
+ \sum_{i=4}^{6}  \frac{2+p_i\cos(\frac{2\pi x}{\varepsilon_i})}{2-p_i\sin(\frac{2\pi y}{\varepsilon_i})}\xi_i(\omega)\textbf{1}_{D_2}(x,y) \nonumber\\ 
&+ \sum_{i=7}^{9}  \frac{2+p_i\sin(\frac{2\pi (x-y)}{\varepsilon_i})}{2-p_i\cos(\frac{2\pi (x-y)}{\varepsilon_i})}\xi_i(\omega)\textbf{1}_{D_3}(x,y) 
+ \sum_{i=10}^{12}  \frac{2+p_i\cos(\frac{2\pi (x-0.5)}{\varepsilon_i})}{2-p_i\sin(\frac{2\pi (x-y)}{\varepsilon_i})}\xi_i(\omega)\textbf{1}_{D_4}(x,y),  \label{MsDSM_NumEX3_Coef}
\end{align}
\end{small}
where $D_1 = [1/8,3/8]\times [1/8,3/8]$, $D_2 = [5/8,7/8]\times [1/8,3/8]$, $D_3 = [1/8,3/8]\times [5/8,7/8]$, $D_4 = [5/8,7/8]\times [7/8,7/8]$, $\{\xi_{i}\}_{i=1}^{12}$ are independent uniform random variables in $[0,1]$, 
$[\varepsilon_1,\varepsilon_2,...,\varepsilon_{12}]=[1/11.1,1/10.2,1/15.1,1/15.4,1/13.9,1/16.1,1/17.3,1/11.3,1/13.3,1/18.1,1/16.7,1/18]$, and \\
$ [p_1,p_2,...,p_{12}]=[1.81,1.85,1.90,1.87,1.82,1.85,1.89,1.85,1.82,1.83,1.84,1.86]$. In Fig.\ref{fig:CoefSamples_EX3}, we show two samples of the coefficient $a^{\varepsilon}(x,y,\omega)$. One can see that the localized random features of the coefficient.
\begin{figure}[tbph]
	\centering
	\begin{subfigure}{0.39\textwidth}
		\centering
		\includegraphics[width=\textwidth]{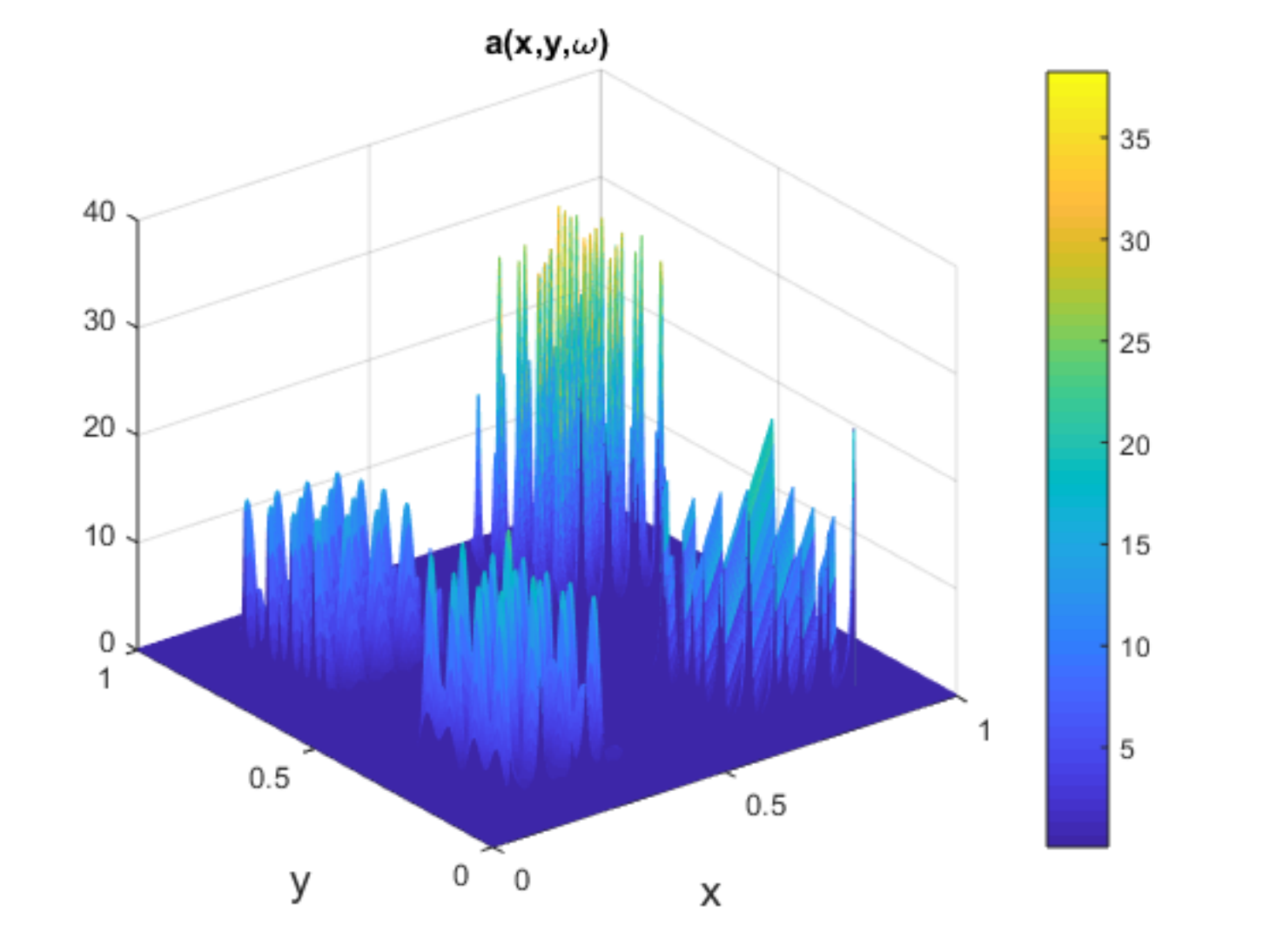}
	\end{subfigure}
	\begin{subfigure}{0.39\textwidth}
		\centering
		\includegraphics[width=\textwidth]{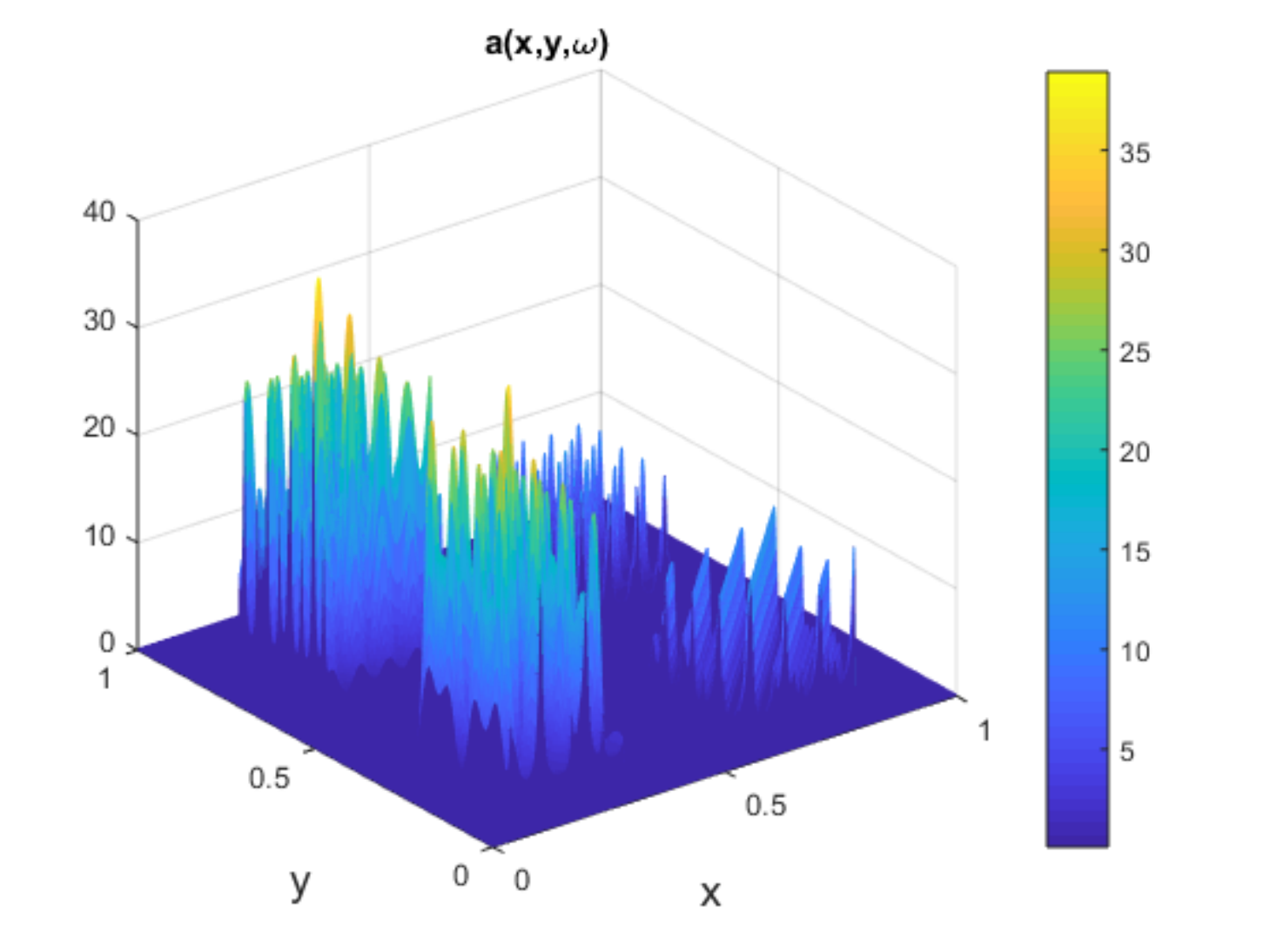}
	\end{subfigure} 	
  \caption{Some coefficient samples of $a(x,y,\omega)$.  }
  \label{fig:CoefSamples_EX3}	
\end{figure}

If we implement the gPC method in a brute force way, it is prohibitively expensive. For instance, 
choosing the total order as $3$ (see Eq.\eqref{gPC_index}) will generate $\frac{(12+3)!}{12!3!}=455$ gPC basis functions. Motivated by the observation in Fig.\ref{fig:ProfilesLocalBasis}, 
we propose an adaptive method to truncate the order of polynomials in the gPC basis depending on the physical location. For instance, when we solve the optimization problem  \eqref{OC_MsStocBasis_Obj}-\eqref{OC_MsStocBasis_Cons1} to compute $\psi_{i,k}(x,\xi(\omega))$ with the vertex $x_i\in D_1$, we define the sparse truncation order as $t=(3,3,3,1,1,1,1,1,1,0,0,0)$ so the associated sparse truncated gPC basis index is $\sJ_{12}^{3,t} = \bkc{\minda \,|\, \minda=\bkr{\alpha_1, \alpha_2, \cdots, \alpha_{12}}, 0 \leq \alpha_i \leq t_i, \alpha_i \in \NN, \bkl{\alpha}=\sum_{i=1}^{12} \alpha_i \le 3}$, which will generate 49 gPC basis functions. This sparse truncated gPC index $\sJ_{12}^{3,t}$ will be used in Eq.\eqref{RepresentOCMaStocBasis}, when we compute the basis $\psi_{i,k}(x,\xi(\omega))$.  The sparse truncation orders associated with other vertices can be defined accordingly. Using the sparse truncation techniques, we dramatically reduce the computational cost.

\emph{Multiquery results in the online stage.}
The implementation of the SCFEM and our method are exactly the same as in the previous examples. In the online stage we use them to solve the effective equation of the multiscale SPDE \eqref{MsDSM_NumEX1_Eq}. We randomly generate 20 force functions of the form $f(x,y)\in\{\sin(k_{i}\pi x + l_{i})\cos(m_{i}\pi x + n_{i}\}_{i=1}^{20}$, where $k_{i}$, $l_{i}$, $m_{i}$, and $n_{i}$ are random numbers.  In Fig.\ref{fig:MultiQuery2DErrorL2H1_EX3}, we show the relative error for the mean function obtained using our method in the $L^2$ norm and the $H^1$ norm, respectively. The results for the STD error are similar. We also obtain the same convergence rate with respect to meshsize $H$ and the polynomial order (not shown here).
\begin{figure}[tbph]
  \centering
  \begin{subfigure}{0.39\textwidth}
    \centering
    \includegraphics[width=\textwidth]{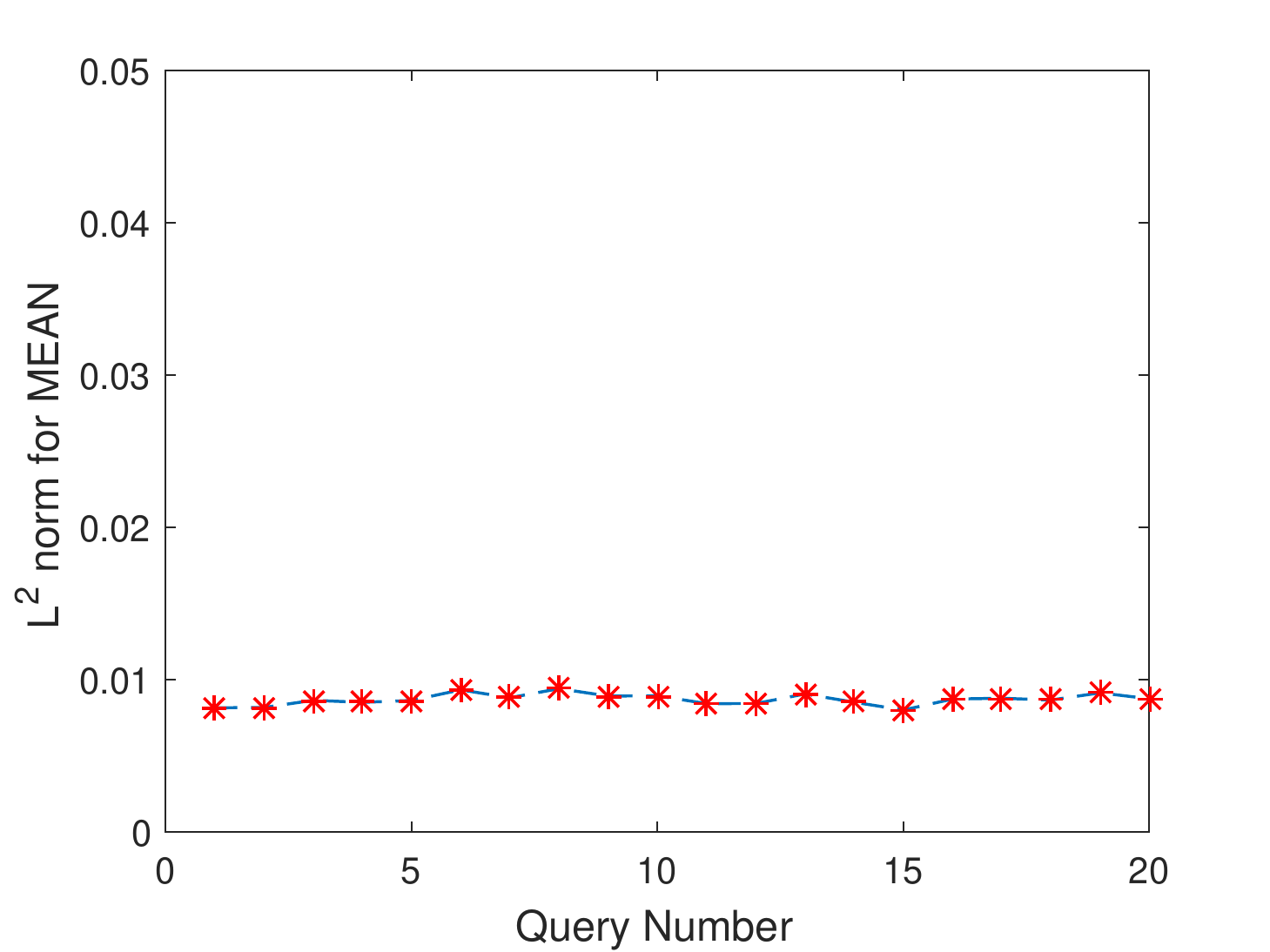}
    \label{fig:ex3_mean}
  \end{subfigure}
  \begin{subfigure}{0.39\textwidth}
    \centering
    \includegraphics[width=\textwidth]{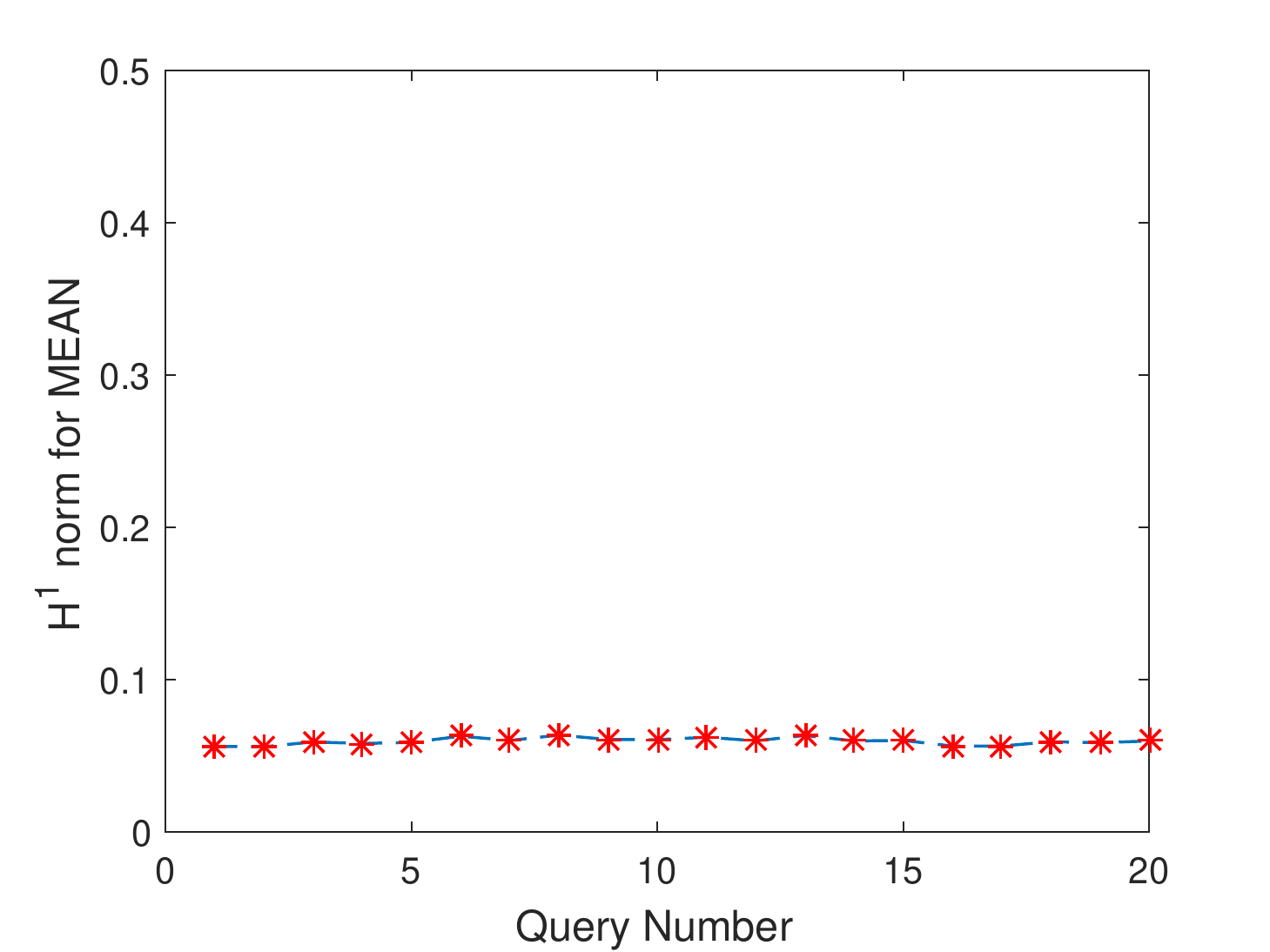}
    \label{fig:ex3_std}
  \end{subfigure}
  \caption{The mean error of our method in the $L^2$ norm and the $H^1$ norm.}
  \label{fig:MultiQuery2DErrorL2H1_EX3}
\end{figure}



\section{Conclusion} \label{sec:Conclusion}
\noindent
In this paper, we developed a novel model reduction method to construct multiscale data-driven stochastic basis
functions that can be used to solve multiscale elliptic PDEs with random coefficients in a multiquery setting. 
These problems arise from various applications, such as heterogeneous porous media flow problem in water aquifer and oil reservoirs simulations. Our method consists of the offline and online stages. In the offline stage, we construct the basis functions through solving localized optimization problems. In the online stage, we can efficiently solve the multiscale random PDEs using our multiscale data-driven stochastic basis functions. Under some mild conditions, we analysed the error between the numerical solution obtained from our method and the exact solution. We presented several numerical examples for 2D stochastic elliptic PDEs with stochastic multiscale coefficients to demonstrate the accuracy and efficiency of our proposed method.

These numerical examples indicate following advantages of our method: (1) by compressing the physical space and the stochastic space simultaneously, our method can solve the multiscale random PDEs with desirable accuracy on a coarse physical grid and using a few data-driven stochastic basis functions; (2) the data-driven stochastic basis functions can be used to solve the multiscale random PDEs with a class of deterministic force functions; (3) the optimization approach enables us to automatically explore the localized random structures in the solution space.

There are two directions we want to explore in our future work. Firstly, we intend to construct  data-driven stochastic basis functions to compute Helmholtz equation in random media. In addition, we find that the stochastic structure in coefficient $a(x,\omega)$ can influence the data-driven basis function in the stochastic components automatically. As a consequence, we shall investigate how to infer the stochastic structure of coefficient $a(x,\omega)$ through the multiscale data-driven stochastic basis functions, namely solving the multiscale random PDEs in an inverse problem setting.



\section*{Acknowledgements}
\noindent
The research of T. Hou is partially supported by the NSF Grant DMS 1613861.  The research of Z. Zhang is supported by the Hong Kong RGC grants (Projects 27300616, 17300817, and 17300318), National Natural Science Foundation of China (Project 11601457), Seed Funding Programme for Basic Research (HKU), and an RAE Improvement Fund from the Faculty of Science (HKU). We would like to thank Professor Lei Zhang for stimulating discussions.


\appendix
\section{Determine the random dimension of the solution space} \label{sec:StochasticDimension}
\noindent
How to determine the random dimension of the solution space (i.e., $N_{\xi}$ in \eqref{OC_MsStocBasis_Cons1}) \emph{a priori} is an important issue in our method. The random dimension is determined by two factors, (1) the regularity of the coefficient $a^{\varepsilon}(x,\omega)$, especially the ratio of $a_{max}/a_{min}$, and (2) the regularity of the force term $f(x)$. We propose an efficient \emph{a posteriori} approach to estimate $N_{\xi}$, which means that if we choose $N_{\xi}$ in our method, we can get the desired accuracy almost surely.

We assume the force function $f(x)$ belongs to a space $F$, which is spanned by $f_{i}(x)$, $i=1,...,N$, i.e., $F=span\{f_1(x),f_2(x),\cdots,f_N(x)\}$, $N\gg 1$. Let $u^{re}_{i}(x,\omega)=\mathcal{L}^{\varepsilon}(x,\omega)^{-1}f_{i}(x)$ denote the reference solution to the Eq.\eqref{MsStoEllip_OCbasis_Eq} with the force function $f_{i}(x)$. Similarly, let  $u_{i}^{DSM}(x,\omega)=\mathcal{L}^{\varepsilon}_{DSM}(x,\omega)^{-1}f_{i}(x)$ denote the numerical solution obtained using
$N_{\xi}$ basis functions in the stochastic dimension. In the physical dimension, the number of basis functions is equal to $N_x\approx H^{-d}$. The numerical error is $e_{i}(x,\omega)=u^{re}_{i}(x,\omega)-u_{i}^{DSM}(x,\omega)$. Given a fixed $N_{\xi}$, if all the error functions
$||e_{i}(x,\omega)||$ are less than a prescribed threshold, then we shall get our estimate for the random dimension of the solution space. However, this approach is expensive since $N$ is large.

We propose a probabilistic approach to reduce the computational cost in estimating $N_{\xi}$. Specifically, we first generate $r$ Gaussian vectors
$\mathbf{\omega}^{j}=(\omega_{1}^{j},\omega_{2}^{j},\cdots,\omega_{N}^{j})^{T}$, $1\leq j \leq r$, $r\ll N$, where $\omega_{i}^{j}\sim \mathcal{N}(0,1)$ are i.i.d. random variables. Then, we generate $r$ force functions $\hat{f}_{j}(x)=\sum_{i=1}^{N}\omega_{i}^{j} f_{i}(x)$. With this randomly generated  $\hat{f}_{j}(x)$,
we compute $e(x,\omega;\hat{f}_{j})=u^{re}(x,\omega;\hat{f}_{j})-u^{DSM}(x,\omega;\hat{f}_{j})$. Using a similar probabilistic argument (see the Lemma 3.14 and Theorem 3.15 in \cite{woolfe2008fast}), we get
\begin{lemma}\label{range_finding}
Fix a positive integer r and a real number $ \alpha>1$. We generate an independent family of standard Gaussian vectors
$\{\mathbf{\omega}^{j}=(\omega_{1}^{j},\omega_{2}^{j},\cdots,\omega_{N}^{j})^{T}: j=1,2,\cdots,r\}$ and random forces $\hat{f}_{j}(x)=\sum_{i=1}^{N}\omega_{i}^{j} f_{i}(x)$.
Then, except with probability $ \alpha^{-r} $,  we have the estimate,
\begin{align}
||u^{re}(x,\omega)-u^{DSM}(x,\omega)|| \leq \alpha \sqrt{\frac{2}{\pi}}\max \limits_{i=1,\cdots,r}
||u^{re}(x,\omega;\hat{f}_{j})-u^{DSM}(x,\omega;\hat{f}_{j})||.
\end{align}
\end{lemma}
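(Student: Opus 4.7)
The key realization is that the displayed inequality is an instance of the randomized range-finding bound, applied to the \emph{discrepancy operator}
\[
\mathcal{E}:\mathbb{R}^{N}\longrightarrow H^{1}_{0}(D)\otimes L^{2}(\Omega),\qquad
\mathcal{E}\mathbf{c}\;:=\;\bigl(\mathcal{L}^{\varepsilon}(\cdot,\omega)^{-1}-\mathcal{L}^{\varepsilon}_{DSM}(\cdot,\omega)^{-1}\bigr)\!\Bigl(\sum_{i=1}^{N}c_{i}f_{i}\Bigr).
\]
Because both solution operators are linear in the forcing, $\mathcal{E}$ is linear, and its image lies in the full tensor space (so $\|\mathcal{E}\mathbf{c}\|$ is a deterministic number even though the physical randomness $\omega$ is still present inside). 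The left-hand side of the lemma is then naturally interpreted as the operator norm $\|\mathcal{E}\|$ with respect to the Euclidean norm on the coefficient space $\mathbb{R}^{N}$---this is the normalization implicitly fixed by drawing $\omega_{i}^{j}\sim\mathcal{N}(0,1)$---while the right-hand side is the largest of $r$ Gaussian probes $\|\mathcal{E}\boldsymbol{\omega}^{j}\|=\|u^{re}(\cdot,\omega;\hat f_{j})-u^{DSM}(\cdot,\omega;\hat f_{j})\|$. The task therefore reduces to bounding $\|\mathcal{E}\|$ by Gaussian sampling, and no PDE-specific structure beyond linearity is needed.

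Next I would carry out the standard singular-value reduction. Let $\sigma_{1}\geq\sigma_{2}\geq\cdots$ be the singular values of $\mathcal{E}$ with right singular vectors $v_{1},v_{2},\ldots\in\mathbb{R}^{N}$. Because a standard Gaussian is rotationally invariant, the coordinates $g_{k}:=v_{k}^{T}\boldsymbol{\omega}$ of $\boldsymbol{\omega}\sim\mathcal{N}(0,I_{N})$ in the orthonormal basis $\{v_{k}\}$ are i.i.d.\ standard normals, so
\[
\|\mathcal{E}\boldsymbol{\omega}\|^{2}\;=\;\sum_{k}\sigma_{k}^{2}g_{k}^{2}\;\geq\;\sigma_{1}^{2}g_{1}^{2}.
\]
The quantitative input is then the one-dimensional Gaussian anti-concentration bound $\mathbb{P}(|g_{1}|\leq t)\leq t\sqrt{2/\pi}$, which follows at once from the fact that the standard normal density is bounded by $(2\pi)^{-1/2}$. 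Choosing $t=\sqrt{\pi/2}/\alpha$ gives $\mathbb{P}(|g_{1}|\leq \sqrt{\pi/2}/\alpha)\leq 1/\alpha$, and hence, with probability at least $1-1/\alpha$, a single probe satisfies $\|\mathcal{E}\|=\sigma_{1}\leq\alpha\sqrt{2/\pi}\,\|\mathcal{E}\boldsymbol{\omega}\|$.

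Finally I would combine these single-probe bounds over $r$ independent trials. By independence of $\boldsymbol{\omega}^{1},\ldots,\boldsymbol{\omega}^{r}$, the event that \emph{every} probe underestimates, i.e.\ $\|\mathcal{E}\boldsymbol{\omega}^{j}\|<\sigma_{1}/(\alpha\sqrt{2/\pi})$ for all $j$, has probability at most $(1/\alpha)^{r}=\alpha^{-r}$, and on its complement $\|\mathcal{E}\|\leq\alpha\sqrt{2/\pi}\max_{j}\|\mathcal{E}\boldsymbol{\omega}^{j}\|$, which after unfolding the definitions of $\mathcal{E}$ and $\hat f_{j}$ is exactly the stated bound. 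I expect the main obstacle to be not the probabilistic argument itself, which is a standard Halko--Martinsson--Tropp type probing estimate, but pinning down the precise interpretation of the left-hand norm and checking that the Euclidean normalization on the coefficient side matches the ambient metric on the force subspace $F$; once that bookkeeping is in place, the remainder simply reproduces Lemma 3.14 of Woolfe et al.
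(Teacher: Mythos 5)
Your proof is correct and takes essentially the same route as the paper, which in fact supplies no proof of its own but only a citation to Lemma 3.14 and Theorem 3.15 of Woolfe et al.; your singular-value reduction of the (finite-rank, linear) discrepancy operator, the Gaussian anti-concentration bound $\mathbb{P}(|g|\le t)\le t\sqrt{2/\pi}$, and the product of failure probabilities over the $r$ independent probes reproduce exactly that cited argument. Your remark that the left-hand side must be read as the operator norm of the discrepancy over the force subspace $F$ with the Euclidean normalization on the coefficient vector is the correct way to make sense of the paper's loosely stated inequality.
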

By applying the Lemma \ref{range_finding}, we can estimate the random dimension $N_{\xi}$. Specifically, 
given a fixed $N_{\xi}$, if all $||u^{re}(x,\omega;\hat{f}_{j})-u(x,\omega;\hat{f}_{j})||$, $j=1,...,r$, 
are less than a prescribed threshold, we know  that the numerical error $||u^{re}(x,\omega)-u^{DSM}(x,\omega)||$ can be bounded by
the upper bound of the errors $||u^{re}(x,\omega;\hat{f}_{j})-u^{DSM}(x,\omega;\hat{f}_{j})||$ with the probability $ 1-\alpha^{-r}$. Otherwise, we increase $N_{\xi}$ and repeat the algorithm.

\section*{Reference}
\bibliographystyle{plain}
\bibliography{ZWpaper}


\end{document}